\documentclass[10pt]{amsart}
 \usepackage{amssymb,stmaryrd,tikz,fullpage}
 \usepackage[all]{xy}
 \usepackage[bookmarks=false,pdfstartview={FitH}]{hyperref}

 \newdir{ >}{{}*!/-9pt/\dir{>}}
 \def\co{\colon\thinspace}
 \def\oc{:\!}
 
 \newtheorem{theorem}{Theorem}[section]
 \newtheorem{corollary}[theorem]{Corollary}
 \newtheorem{lemma}[theorem]{Lemma}
 \newtheorem{proposition}[theorem]{Proposition}
 \newtheorem{conjecture}[theorem]{Conjecture}
 \newtheorem{theorem*}{Theorem}

 \theoremstyle{definition}
 \newtheorem{definition}[theorem]{Definition}

 \numberwithin{equation}{section}

 \DeclareMathOperator{\IM}{Im}
 \DeclareMathOperator{\Exp}{exp}
 \DeclareMathOperator{\Hom}{Hom}
 \DeclareMathOperator{\Map}{Map}
 \DeclareMathOperator{\Dim}{dim}
 \DeclareMathOperator{\Ker}{Ker}
 \DeclareMathOperator{\End}{End}
 \DeclareMathOperator{\inj}{Inj}
 \DeclareMathOperator{\FMap}{FMap}
 
 \newcommand{\Real}{\mathbb{R}}
 \newcommand{\Complex}{\mathbb{C}}
 \newcommand{\redKstar}{\tilde{K}^{*}_{G}}
 \newcommand{\redKzero}{\tilde{K}^{0}_{G}}
 \newcommand{\Kstar}{K^{*}_{G}}
 \newcommand{\Kone}{K^{1}_{G}}
 \newcommand{\Kzero}{K^{0}_{G}}
 \newcommand{\Ell}{\mathcal{L}(V_{0},V_{1})}
 \newcommand{\aich}{\Hom(V_{0},V_{1})}
 \newcommand{\svo}{s(V_{0})} 
  
\begin{document}

\title{An Equivariant Generalization of the Miller Splitting Theorem}
\author{Harry Ullman}
\thanks{Supported by an EPSRC/UoS Doctoral Prize Fellowship. Part of this work was completed as a PhD student supported by the EPSRC}
\address{School of Mathematics and Statistics\\
University of Sheffield\\
Hicks Building\\
Hounsfield Road\\
Sheffield\\ 
S3 7RH\\
UK}
\email{h.ullman@sheffield.ac.uk}
\date{March 2011}
\keywords{isometry, Miller splitting, cofibre sequence, functional calculus, Gysin map, residue}
\subjclass{55P42,55P91,55P92}

\begin{abstract} Let $G$ be a compact Lie group. We build a tower of $G$--spectra over the suspension spectrum of the space of linear isometries from one $G$--representation to another. The stable cofibres of the maps running down the tower are certain interesting Thom spaces. We conjecture that this tower provides an equivariant extension of Miller's stable splitting of Stiefel manifolds. We provide a cohomological obstruction to the tower producing a splitting in most cases; however, this obstruction does not rule out a split tower in the case where the Miller splitting is possible. We claim that in this case we have a split tower which would then produce an equivariant version of the Miller splitting, we prove this claim in certain special cases though the general case remains a conjecture. To achieve these results we construct a variation of the functional calculus with useful homotopy-theoretic properties and explore the geometric links between certain equivariant Gysin maps and residue theory.
\end{abstract}

\maketitle 

\tableofcontents

\section{Introduction}\label{Introduction}

Let $G$ be a compact Lie group and let $V_{0}$ and $V_{1}$ be finite dimensional complex $G$--representations with $G$--invariant inner product such that $d_{0}:=\Dim(V_{0})\leqslant \Dim(V_{1})$. Let $\Ell$ be the space of all linear isometries from $V_{0}$ to $V_{1}$ equipped with the usual conjugation $G$--action. The aim of this paper is to study the equivariant stable homotopy theory of $\Ell$.

Let $\Ell_{+}$ be $\Ell$ equipped with a disjoint $G$--fixed basepoint. We construct a stable diagram containing $\Ell_{+}$ with interesting topological properties. We write $T$ for the tautological bundle over the equivariant Grassmannian $G_{k}(V_{0})$, use $\Hom(T,V_{1}-V_{0})$ as a shorthand for the virtual bundle $\Hom(T,V_{1})-\Hom(T,V_{0})$ and we let $s(T)$ be the bundle
\[
\{(V,\alpha):V\in G_{k}(V_{0}),\alpha\in s(V)\}.
\]

\begin{theorem*}\label{IntroTower} There is a natural tower of $G$--spectra
\[
\Ell_{+}\to X_{d_{0}-1}\to\ldots\to X_{1}\to S^{0}
\]
such that the stable homotopy fibres of the maps $X_{k}\to X_{k-1}$ are the Thom spaces
\[
G_{k}(V_{0})^{\Hom(T,V_{1}-V_{0})\oplus s(T)}.
\]
\end{theorem*}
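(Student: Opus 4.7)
The plan is to build the tower from an equivariant ``rank of deviation'' stratification of $\Ell$ and identify the successive cofibres by a stratified Pontryagin-Thom argument. The construction proceeds in three stages.

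\textbf{Step 1 (linearization via functional calculus).} Using the equivariant functional calculus developed earlier in the paper, I would assign to each isometry $\alpha \in \Ell$ a linear ``deviation'' datum in a $G$-equivariant vector bundle over $\Ell$, set up so that the rank of the deviation is a well-defined, $G$-invariant, integer-valued function on $\Ell$. The reason to work with the functional calculus rather than a naive ``$\alpha - \iota$'' for a fixed reference isometry $\iota$ is precisely that no such equivariant $\iota$ need exist; the functional calculus lets one define the deviation intrinsically, at the cost of having to interpret some of the normal-bundle data virtually.

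\textbf{Step 2 (the tower).} The deviation rank then yields a $G$-invariant filtration $F_0 \subset F_1 \subset \cdots \subset F_{d_0} = \Ell$ by closed subsets, with $F_0$ corresponding to the trivial deviation. I would define $X_k$ as the cofibre in $G$-spectra obtained by collapsing the low-rank strata inside $\Ell_+$, arranged so that the tower reads $\Ell_+ \to X_{d_0-1}\to\cdots\to X_1 \to S^0$ with each structural map $X_k \to X_{k-1}$ induced by enlarging the collapsed subspace by one further stratum (the top map $\Ell_+ \to X_{d_0-1}$ being the initial collapse and the bottom $X_1 \to S^0$ retaining only the reference).

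\textbf{Step 3 (identifying the cofibres).} By excision applied to the pair of consecutive strata, the cofibre of $X_k \to X_{k-1}$ is the Thom spectrum of the $G$-equivariant normal bundle of the rank-exactly-$k$ locus inside $F_k$. A rank-$k$ deviation is determined by its $k$-dimensional ``row space'' $V \subset V_0$, which exhibits the stratum as a bundle over $G_k(V_0)$. The transverse linear data of the deviation then contributes the virtual bundle $\Hom(T,V_1)-\Hom(T,V_0)$, while the self-adjoint correction needed for the perturbation to remain an isometry (again read off via the functional calculus) contributes $s(T)$. These assemble into the claimed Thom spectrum $G_k(V_0)^{\Hom(T,V_1-V_0)\oplus s(T)}$.

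\textbf{Main obstacle.} The principal technical difficulty is step 1: making the linearization and its rank stratification intrinsic and $G$-equivariant, and ensuring that the local normal-bundle calculation assembles into a globally defined virtual bundle over each $G_k(V_0)$ of exactly the stated form. This is precisely what the earlier functional-calculus material is tailored to handle; once it is in place, steps 2 and 3 should be a fairly standard stratified Pontryagin-Thom identification, with the fibre/cofibre switch in the last line of the statement amounting to the usual stable shift.
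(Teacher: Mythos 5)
Your Step 1 is where this breaks down, and it cannot be repaired in the form you propose. An intrinsic, $G$--invariant ``deviation rank'' stratification of $\Ell$ with a one-point bottom stratum is exactly as strong as choosing a $G$--fixed reference isometry: the rank-zero locus would be a $G$--fixed point of $\Ell$, i.e.\ an equivariant isometric embedding $V_{0}\to V_{1}$, which exists if and only if $V_{0}$ is a subrepresentation of $V_{1}$. The functional calculus developed in the paper does not manufacture such a stratification---it produces facial self-maps of the eigenvalue spaces $D(d)$ and a degree criterion for when two such maps are facially homotopic; it says nothing about a preferred point of $\Ell$. Moreover, even when $V_{0}\leqslant V_{1}$ and Miller's filtration $F_{k}(\Ell)$ does exist, the tower of the theorem is not obtained by collapsing strata of $\Ell$: the assertion that the filtration agrees with the $X_{k}$ is precisely the paper's open Conjecture in Section 6, and in any case the filtration runs in the opposite direction (its bottom term $S^{0}=F_{0}(\Ell)_{+}$ maps \emph{into} $\Ell_{+}$, while the quotient tower $\Ell_{+}\to\Ell_{+}/F_{0}\to\cdots$ terminates at the top Thom space rather than at $S^{0}$). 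The paper's obstruction theorem, which shows the tower generally fails to split when $V_{0}\not\leqslant V_{1}$, is a further sign that no Miller-style stratification of $\Ell$ itself is available in that generality.

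What the paper actually does is construct the $X_{k}$ as new spaces rather than as quotients of $\Ell$. One sets $\tilde{X}_{k}$ to be the compactification of $\{(\alpha,\theta):\alpha\in s(V_{0}),\ \theta\in\mathcal{L}(P_{k}(\alpha),V_{1})\}$, where $P_{k}(\alpha)$ is the span of the top $k$ eigenspaces of the auxiliary self-adjoint endomorphism $\alpha$, and defines $X_{k}:=S^{-s(V_{0})}\wedge\Sigma^{\infty}\tilde{X}_{k}$; the maps down the tower restrict $\theta$ to $P_{k-1}(\alpha)$, and the top is attached via the homeomorphism $S^{s(V_{0})}\wedge\Ell_{\infty}\cong\inj(V_{0},V_{1})_{\infty}$, $(\alpha,\theta)\mapsto-\theta\circ\Exp(\alpha)$. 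The cofibres are then identified not by a stratified Pontryagin--Thom argument on $\Ell$ but by explicit fibrewise homeomorphisms of $\tilde{X}_{k}/Y_{k}$ and $\tilde{X}_{k-1}/Y_{k}$ with bundles over $G_{k}(V_{0})$ whose fibres are $\inj(W,V_{1})_{\infty}$ and $\Sigma\inj(W,V_{1})^{c}_{\infty}$, combined with the NDR cofibre sequence for the pair $(S^{\Hom(W,V_{1})},\inj(W,V_{1})^{c}_{\infty})$; the homotopy classification of facial maps is used only to show that the map induced by restriction agrees up to homotopy with the connecting map of that cofibre sequence. Your Step 3 gesture at ``excision on strata'' has no analogue here because there are no strata of $\Ell$ in play.
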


The above result is phrased differently when proved in Section~\ref{An equivariant stable tower over isometries}; our statements there concern homotopy cofibres rather than homotopy fibres but we state the theorem using fibres here to avoid superfluous suspensions. We cover $G$--spectra in detail in Section~\ref{Conventions}, however, we note here that we use $G$--spectra indexed on a chosen complete $G$--universe, rather than naive $G$--spectra indexed over $\mathbb{Z}$.

Studying the cofibres of this tower leads to interesting homotopical insight about $\Ell$. In particular this result can be seen as generalization of Miller's stable splitting of Stiefel manifolds \cite{Miller}, we also refer the reader to \cite[Section $1$]{Crabb}, \cite[Section $1$]{Kitchloo} and \cite[Appendix $A$]{HarryPhD}. Consider the above setup without equivariance, then $V_{0}\cong \Complex^{d_{0}}$, $V_{1}\cong \Complex^{d_{0}+t}$ for some $t$ and we can think of $\mathcal{L}(\Complex^{d_{0}},\Complex^{d_{0}+t})$ as a Stiefel manifold. Miller showed that there is a stable splitting
\[
\mathcal{L}(\Complex^{d_{0}},\Complex^{d_{0}+t})_{+}\simeq \bigvee_{k=0}^{d_{0}}G_{k}(\Complex^{d_{0}})^{\Hom(T,\Complex^{t})\oplus s(T)}.
\]
We investigate whether our tower can produce a similar stable splitting. Returning to our equivariant setup, consider the case where $V_{0}$ is a subrepresentation of $V_{1}$. We conjecture that our tower splits to retrieve an equivariant form of the Miller splitting. We cannot show this, however we can show that the bottom and top of the tower split and thus we prove the following theorem. 

\begin{theorem*}\label{IntroSplitting} Let $V_{0}\leqslant V_{1}$ and let $d_{0}=2$, then we have a split tower and recover an equivariant Miller splitting
\[
\Ell_{+}\simeq \bigvee_{k=0}^{2}G_{k}(V_{0})^{\Hom(T,V_{1}-V_{0})\oplus s(T)}.
\]
\end{theorem*}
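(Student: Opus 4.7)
The plan is to exploit the small size of the tower when $d_0 = 2$: there are only two non-trivial stages,
\[
\Ell_{+} \xrightarrow{p_2} X_1 \xrightarrow{p_1} S^0,
\]
whose successive stable homotopy fibres are
\[
F_2 := G_2(V_0)^{\Hom(T, V_1-V_0) \oplus s(T)} \quad \text{and} \quad F_1 := G_1(V_0)^{\Hom(T, V_1-V_0) \oplus s(T)}.
\]
Since $G_2(V_0)$ is a single $G$-fixed point with tautological bundle $V_0$, $F_2$ is simply the virtual sphere $S^W$ with $W := \Hom(V_0, V_1 - V_0) \oplus s(V_0)$. To obtain the wedge decomposition in the theorem it suffices to split each of the two cofibre sequences $F_1 \to X_1 \to S^0$ and $F_2 \to \Ell_+ \to X_1$ stably and $G$-equivariantly; I would do so in that order.

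For the bottom sequence the hypothesis $V_0 \leqslant V_1$ supplies a canonical $G$-fixed point $\iota \in \Ell$, namely the inclusion, and hence a $G$-equivariant basepoint map $\iota_+ \co S^0 \to \Ell_+$. Directly from the construction of the tower in Section~\ref{An equivariant stable tower over isometries}, I would verify that the composite $p_1 p_2 \iota_+$ is homotopic to the identity of $S^0$, so that $p_2 \iota_+$ is a stable section of $p_1$ and yields a splitting $X_1 \simeq S^0 \vee F_1$.

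For the top sequence I need a stable $G$-equivariant retraction $r \co \Ell_+ \to S^W$ whose restriction along the fibre inclusion $F_2 \to \Ell_+$ is a stable equivalence. The natural candidate is the equivariant Pontryagin--Thom collapse at the fixed point $\iota$: a short direct calculation identifies the tangent representation $T_\iota \Ell$ with $W$, and collapsing an equivariant tubular neighbourhood of $\iota$ produces the required map $r \co \Ell_+ \to S^{T_\iota \Ell} = S^W$. I would then check, using the Gysin-map/residue comparison and the modified functional calculus developed in the preceding sections, that this $r$ really does split off the fibre $F_2$, giving $\Ell_+ \simeq X_1 \vee F_2$. Concatenating with the bottom splitting yields the claimed decomposition.

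The main obstacle is this last compatibility check: writing down the Pontryagin--Thom retraction is routine, and the normal-bundle identification $T_\iota \Ell \cong W$ is an elementary representation-theoretic computation, but matching $r$ with the tower's fibre inclusion requires the explicit model of $p_2$ from earlier in the paper, and it is precisely here that the functional-calculus and Gysin-residue tools built up in the rest of the paper are essential. By comparison, the bottom stage and the assembly of the two splittings into a single wedge decomposition are formal.
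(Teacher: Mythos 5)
Your overall architecture (split the two cofibre sequences separately and assemble) matches the paper, and your bottom-stage argument is essentially the paper's: the hypothesis $V_{0}\leqslant V_{1}$ gives the $G$--fixed point $I\in\Ell$, and the composite $S^{0}\to\Ell_{\infty}\to X_{1}$ is a section of $\pi_{1}$ because $\pi_{1}\pi_{2}$ is the compactified projection $\Ell_{\infty}\to S^{0}$ (the paper packages this as $r_{1}\circ i_{1}\circ r_{0}^{-1}$ with $F_{0}(\Ell)_{\infty}=S^{0}$, but it is the same section). The gap is in the top stage. You reduce everything to the claim that the Pontryagin--Thom collapse $r\co\Ell_{+}\to S^{W}$ at $\iota$ restricts along the fibre inclusion $\phi_{2}\co F_{2}\to\Ell_{+}$ to a stable equivalence, and then you defer exactly that claim (``I would then check\ldots''). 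This is not a routine compatibility: it is the entire non-formal content of the theorem, and the tools you propose to use for it are the wrong ones --- the Gysin/residue machinery of Section~\ref{Gysin maps and residues} is used only to obstruct splittings via $\delta_{1}$, and it says nothing about the top attaching map or about $\phi_{2}$. Moreover your candidate $r$ introduces an extra issue you do not address: the natural retraction coming from the geometry is the collapse $\Ell_{\infty}\to\Ell_{\infty}/F_{1}(\Ell)_{\infty}\cong S^{W}$ onto the whole open stratum $\{\theta:\theta-I\ \text{injective}\}$, and identifying this equivariantly (up to homotopy) with the collapse onto a small tubular neighbourhood of the single point $\iota$ is an additional argument, not a tautology.

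The paper closes this gap by a different route that avoids constructing a retraction at all: one observes that the explicit map $\phi_{d_{0}}\co G_{d_{0}}(V_{0})^{\Hom(T,V_{2})\oplus s(T)}\to\Ell_{\infty}$ coincides with Miller's splitting map $\sigma_{d_{0}}$ (compare \ref{thetaEalpha} with \cite[Lemma $1.1$]{Crabb}), so Miller's result already splits the cofibre sequence $F_{d_{0}-1}(\Ell)_{\infty}\to\Ell_{\infty}\to\Ell_{\infty}/F_{d_{0}-1}(\Ell)_{\infty}$; an octahedral-axiom comparison of this triangle with the tower's top triangle then shows $r_{d_{0}-1}\co F_{d_{0}-1}(\Ell)_{\infty}\to X_{d_{0}-1}$ has contractible cofibre and is hence a stable equivalence, giving $\Ell_{\infty}\simeq X_{1}\vee F_{2}$. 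If you want to salvage your version, the missing step is precisely to show $r\circ\phi_{2}\simeq\operatorname{id}_{S^{W}}$ equivariantly, and the only available mechanism for that in this paper is the same identification of $\phi_{2}$ with $\sigma_{2}$ together with Miller's computation of the composite of $\sigma_{2}$ with the filtration-quotient collapse --- at which point you have reproduced the paper's argument with extra overhead.
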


Return to the general case, where $V_{0}$ may not necessarily be a subrepresentation of $V_{1}$. We investigate whether the tower splits in the more general setting by studying interesting geometric properties satisfied by one of the maps in the tower. This investigation includes a treatment of the links between certain equivariant Gysin maps and residue theory; in particular we cover an interesting general result equivariantly extending previous study of Quillen \cite{QuillenCobordismFGL}.

\begin{theorem*}\label{IntroObstruction} Let $G$ be connected. There is a cohomological obstruction to the tower splitting if $V_{0}$ is not a subrepresentation of $V_{1}$. If $G$ is not connected then there is a cohomological obstruction to the tower splitting if the $K$--theory polynomial associated to $V_{0}$ does not divide the $K$--theory polynomial associated to $V_{1}$.
\end{theorem*}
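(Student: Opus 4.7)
The plan is to reduce the splitting question to a single cofibre sequence in the tower, namely the bottom one
\[
F_{1}\to X_{1}\to S^{0},
\]
whose homotopy fibre is $F_{1}=P(V_{0})^{\Hom(L,V_{1}-V_{0})\oplus s(L)}$, with $L$ the tautological line bundle over $P(V_{0})=G_{1}(V_{0})$. A splitting of the tower in particular forces this stage to split, which is equivalent to the map $X_{1}\to S^{0}$ admitting a section. I would test this by applying equivariant $K$--theory: a stable splitting of the cofibre sequence yields a split short exact sequence in $\Kstar$, and in particular the unit $1\in\Kzero(S^{0})=R(G)$ must lie in the image of the induced Gysin map $\Kstar(F_{1})\to R(G)$.

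The core computation is an explicit description of this Gysin map. Here I would combine the geometric identification of $X_{1}\to S^{0}$ established earlier in the paper with the Thom isomorphism for the virtual bundle $\Hom(L,V_{1}-V_{0})\oplus s(L)$ to realise the map as a pushforward from the equivariant projective bundle $P(V_{0})$ down to a point. Using the presentation of $\Kstar(P(V_{0}))$ as a quotient of $R(G)[L,L^{-1}]$ by the $K$--theory polynomial of $V_{0}$, this pushforward can be rewritten via an equivariant residue formula extending Quillen's classical approach. The output is that the image of the Gysin map is the principal ideal in $R(G)$ generated by an explicit element built from the $K$--theoretic Euler data of $V_{0}$ and $V_{1}$.

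From there the obstruction becomes algebraic. I would verify that $1\in R(G)$ belongs to this ideal if and only if the $K$--theory polynomial of $V_{0}$ divides that of $V_{1}$, which settles the statement for disconnected $G$. For connected $G$ I would then appeal to character theory: after restricting to a maximal torus and invoking the splitting principle, divisibility of the polynomials is equivalent to the multiset of weights of $V_{0}$ being contained in that of $V_{1}$, and Weyl invariance then promotes this inclusion of weights to an inclusion $V_{0}\leqslant V_{1}$ of $G$--representations. Contrapositively, if $V_{0}$ is not a subrepresentation of $V_{1}$ then the divisibility fails and the tower cannot split.

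I expect the main obstacle to be the explicit identification of the Gysin map with an equivariant residue, twisted by a virtual Thom structure. This is exactly the bridge between equivariant Gysin maps and residue theory that the introduction flags as one of the main technical contributions of the paper, and the appearance of $\Hom(T,V_{1}-V_{0})$ as a virtual bundle forces one to reason stably throughout and to work with the underlying geometric map on isometries rather than with naive bundle data. A smaller technical point is checking that failure to split at the bottom really does obstruct the whole tower, which follows from the standard observation that any splitting of the tower restricts to a splitting of each successive stage.
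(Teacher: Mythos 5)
Your overall route matches the paper's: reduce the splitting question to the bottom triangle of the tower, compute the resulting obstruction in equivariant $K$--theory by identifying an equivariant Gysin map with a Quillen--style residue, extract the divisibility condition on $f_{V_{0}}$ and $f_{V_{1}}$, and for connected $G$ pass to a maximal torus to upgrade divisibility of the $K$--theory polynomials to containment of representations. However, the $K$--theoretic splitting criterion you use is inverted, and this is not a cosmetic slip. The correct necessary condition for the bottom stage to split is that the connecting map $\delta_{1}\co S^{0}\to\Sigma PV_{0}^{\Hom(T,V_{1}-V_{0})\oplus s(T)}$ is null, hence that $\delta_{1}^{*}=0$ on reduced equivariant $K$--theory. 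The paper identifies $\delta_{1}$ with a stable zero section followed by the Pontryagin--Thom collapse of an explicit embedding $PV_{0}\rightarrowtail s(V_{0})$, so that (up to units and Thom isomorphisms) $\delta_{1}^{*}(g(z)\cdot u)=\operatorname{res}\bigl(g(z)f_{V_{1}}(z)/f_{V_{0}}(z)\,dz\bigr)$; this vanishes for every $g$ exactly when $f_{V_{0}}(z)$ divides $f_{V_{1}}(z)$, which is the obstruction.

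You instead require that the unit $1\in R(G)$ lie in the image of the map $\Kstar(F_{1})\to R(G)$, and you assert that $1$ lies in this ideal if and only if $f_{V_{0}}\mid f_{V_{1}}$. Both claims fail. When $f_{V_{0}}\mid f_{V_{1}}$ the map is identically zero, so its image never contains $1$; taken literally, your criterion would therefore obstruct the splitting precisely in the subrepresentation case, contradicting Theorem~\ref{Dimntwosplitting}. Conversely, when $f_{V_{0}}\nmid f_{V_{1}}$ the image is a nonzero ideal that need not contain $1$: already for $G$ trivial, $f_{V_{0}}=z^{2}$ and $f_{V_{1}}=z^{3}+2z$ give image $2\mathbb{Z}$, so ``does $1$ lie in the image'' detects nothing in either direction. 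The fix is to replace ``$1$ lies in the image of the Gysin map'' throughout by ``the composite of multiplication by the $K$--theory Euler class of $\Hom(T,V_{1})$ with the residue map against $f_{V_{0}}$ is identically zero''; with that substitution the remainder of your outline, including the maximal-torus argument for connected $G$, goes through as in the paper.
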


To achieve these results we first build a variation of the functional calculus with useful homotopy-theoretic properties. The functional calculus is a tool from functional analysis that is used to construct elements of a $C^{*}$--algebra using continuous functions. Let $V$ be a Hermitian space, then set $s(V)$ to be the space of self-adjoint endomorphisms of $V$. We build a space $D(d)$ and subspaces $F_{i}(D(d))$ which model eigenvalues of elements of $s(V)$. We then build a continuous generalization of the functional calculus which takes a self-adjoint endomorphism $\alpha$ and a continuous self-map $f$ of $D(d)$ such that $f(F_{i}(D(d)))\subseteq F_{i}(D(d))$ and outputs a new self-adjoint endomorphism denoted $\mathfrak{A}_{f}(\alpha)$.

Further, we extend this construction from $s(V)$ to $\Hom(V,W)$ for $W$ another Hermitian space. Let $\gamma\in\Hom(V,W)$, then we can use this functional calculus to build a new homomorphism $\mathfrak{B}_{f}(\gamma)\co V\to W$. Let $S^{s(V)}$ denote the one-point compactification of $s(V)$ and let $S^{\Hom(V,W)}$ denote the one-point compactification of $\Hom(V,W)$. Then our functional calculus gives us maps
\[
\mathfrak{A}_{f}\co S^{s(V)}\to S^{s(V)}
\]
\[
\mathfrak{B}_{f}\co S^{\Hom(V,W)}\to S^{\Hom(V,W)}.
\]
The power of this construction comes from its homotopy properties. Many useful maps can be rephrased in the form $\mathfrak{A}_{f}$ or $\mathfrak{B}_{f}$. Let $f\co D(d)\to D(d)$ and $g\co D(d)\to D(d)$ be homotopic via a homotopy that preserves each $F_{i}(D(d))$. Then $\mathfrak{A}_{f}\simeq \mathfrak{A}_{g}$ and $\mathfrak{B}_{f}\simeq \mathfrak{B}_{g}$. Thus determining the homotopy type of $D(d)$ determines the homotopy type of maps built using this functional calculus. The intersection of all subspaces $F_{i}(D(d))$ is naturally a copy of $S^{1}$ sitting inside $D(d)$. We use the below result to prove Theorem~\ref{IntroTower}, amongst other statements.

\begin{theorem*}\label{IntroHomotopy} Let $f$ and $g$ be two self-maps of $D(d)$ such that $f(F_{i}(D(d)))\subseteq F_{i}(D(d))$ and $g(F_{i}(D(d)))\subseteq F_{i}(D(d))$ for all $i$. Then there are induced maps $f'$, $g'\co S^{1}\to S^{1}$ and $f$ and $g$ are homotopic via a homotopy that preserves each $F_{i}(D(d))$ if and only if $f'$ and $g'$ have the same degree.
\end{theorem*}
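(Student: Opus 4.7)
The plan is to prove the two directions separately. The \emph{only if} direction is straightforward: if $H\co D(d)\times I\to D(d)$ is a homotopy from $f$ to $g$ with $H_{t}(F_{i}(D(d)))\subseteq F_{i}(D(d))$ for every $i$ and $t$, then $H_{t}$ preserves the intersection $S^{1}=\bigcap_{i}F_{i}(D(d))$. Restricting $H$ to this circle yields a based homotopy from $f'$ to $g'$, so the two self-maps of $S^{1}$ have the same degree.

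For the \emph{if} direction, the central step is to exhibit a strong deformation retraction $r_{t}\co D(d)\to D(d)$ from the identity at $t=0$ to a retraction $r_{1}\co D(d)\to S^{1}$ at $t=1$, subject to the crucial compatibility $r_{t}(F_{i}(D(d)))\subseteq F_{i}(D(d))$ for every $i$ and $t$. Granting such a retraction, any filtration-preserving self-map $f$ of $D(d)$ is homotopic, through filtration-preserving self-maps, to the composite $\iota\circ f'\circ r_{1}$, where $\iota\co S^{1}\hookrightarrow D(d)$ is the inclusion; a two-parameter argument of the form $(s,t)\mapsto r_{s}\circ f\circ r_{t}$ supplies the homotopy. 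The same reduction applies to $g$.

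With both $f$ and $g$ replaced by maps that factor through $S^{1}$, the hypothesis $\deg(f')=\deg(g')$ gives a homotopy $h_{t}\co S^{1}\to S^{1}$ from $f'$ to $g'$, and the family $\iota\circ h_{t}\circ r_{1}$ is a filtration-preserving homotopy between the two composites. Concatenating the three homotopies completes the proof.

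The main obstacle is the construction of the filtration-respecting deformation retraction $D(d)\to S^{1}$. While $\bigcap_{i}F_{i}(D(d))$ is canonically a circle, one must collapse $D(d)$ onto it through a one-parameter family whose time slices each restrict to self-maps of \emph{every} $F_{i}(D(d))$ simultaneously. My approach would be to use the explicit eigenvalue model of $D(d)$ supplied by the preceding construction: stratify $D(d)$ by which of the subspaces $F_{i}(D(d))$ a point lies in, and carry out a radial collapse on each stratum that slides eigenvalues to the common circle while never leaving any $F_{i}$ containing the starting point. Checking the compatibility of these local collapses along the overlaps between strata is the genuinely geometric content of the argument; once this is verified the remainder of the proof is formal.
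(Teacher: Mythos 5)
Your \emph{only if} direction is fine, but the \emph{if} direction rests on a construction that cannot exist, so this is a genuine gap rather than a deferred technicality. The space $D(d)$ is contractible: in the coordinates $u_{0}=t_{0}$, $u_{i}=t_{i}-t_{i-1}$ one identifies $D'(d)$ with $\Real\times[0,\infty)^{d-1}$, i.e.\ a closed half-space, whose one-point compactification is a ball $B^{d}$. Consequently the core circle $S^{1}=\bigcap_{i}F_{i}(D(d))$ is null-homotopic inside $D(d)$, and for \emph{any} continuous map $r_{1}\co D(d)\to S^{1}$ the composite $S^{1}\hookrightarrow D(d)\overset{r_{1}}{\to}S^{1}$ is null-homotopic, hence of degree $0$. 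In particular $S^{1}$ is not a retract of $D(d)$ (equivalently, $H_{1}(S^{1})=\mathbb{Z}$ cannot be a retract of $H_{1}(D(d))=0$), so no deformation retraction of the kind you require exists, facial or otherwise; this is why the obvious candidate (sliding each $t_{i}$ towards the mean, or towards $t_{0}$) fails to be proper and does not extend over the point at infinity. Without such an $r_{1}$ you cannot replace $f$ by $\iota\circ f'\circ r_{1}$ --- indeed any map factoring through $S^{1}$ has degree $0$ on the core circle, so your reduction would only ever apply when $\deg(f')=\deg(g')=0$.

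The paper's argument runs in the opposite direction: instead of pushing $D(d)$ down onto $S^{1}$, it extends the homotopy upward from $S^{1}$. Setting $\bar{B}_{\sigma}=\bigcap_{i\notin\sigma}F_{i}(D(d))$ and $\bar{B}[k]=\bigcup_{|\sigma|\leqslant k}\bar{B}_{\sigma}$, each $\bar{B}_{\sigma}$ with $|\sigma|=k+1$ is a ball whose boundary lies in $\bar{B}[k]$, so a homotopy already defined on $\bar{B}[k]$ extends over each such ball (extension of maps from $\partial B^{n+1}$ into a ball, applied to $[0,1]\times\bar{B}_{\sigma}$), and the extensions for different $\sigma$ agree because their overlaps lie in $\bar{B}[k]$. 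Starting from the homotopy on $\bar{B}[0]=S^{1}$ supplied by the equality of degrees and inducting up to $\bar{B}[d-1]=D(d)$ yields the facial homotopy $f\simeq g$. To repair your proof you would need to replace the retraction by a relative extension step of this kind.
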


This paper is laid out as follows. Section~\ref{Conventions} covers various notational statements, conventions and technical statements we will use throughout the document. Section~\ref{Extended functional calculus} details an overview of our functional calculus variation, including a concrete example and concluding with a proof of Theorem~\ref{IntroHomotopy}. The main result, Theorem~\ref{IntroTower}, is stated in more detail and proved in Section~\ref{An equivariant stable tower over isometries}. This section also includes explicit statements regarding the maps in the tower. Section~\ref{Gysin maps and residues} begins with a general study of Gysin maps associated to equivariant embeddings of projective space. We provide geometric links between these maps and residue maps before using the general theory and the geometric properties of the bottom of the tower to prove Theorem~\ref{IntroObstruction} and provide a cohomological obstruction to a stable splitting in the general case. Section~\ref{The subrepresentation case---conjecture} covers the conjecture in the special case where $V_{0}$ is a subrepresentation of $V_{1}$, the only case where a splitting is possible. We then retrieve the dimension $2$ special case Theorem~\ref{IntroSplitting} by considering the compatibility of our work with Miller's work \cite{Miller}.

Many of the results in this paper were first detailed in the author's PhD thesis \cite{HarryPhD}---proofs left to the reader in this document are generally recorded in \cite{HarryPhD}. The author would like to thank his supervisor Neil Strickland for much support, advice and insight. 

\section{Conventions}\label{Conventions}

Our spaces are compactly generated weak Hausdorff $G$--spaces, when we have basepoints they are $G$--fixed. We pass from unbased spaces to based spaces via the Alexandroff one point-compactification; we denote the one-point compactification of $X$ by $X_{\infty}$ and take the basepoint to be the added point. This is equivalent when $X$ is compact to adjoining a disjoint basepoint, hence $X_{+}=X_{\infty}$ in this case and we mostly dispense with $X_{+}$ notation from this point onwards. We recall a map $f$ to be proper if and only if the inverse image of any compact set is compact. A proper map $f\co X\to Y$ then has a continuous extension $f_{\infty}\co X_{\infty}\to Y_{\infty}$. One other convention we use is that if $X'$ is an unbased space then $X$ tends to be used to denote the one-point compactification.

We assume $G$ acts on the left, let $\Map(X,Y)$ denote the space of continuous maps from $X$ to $Y$ equipped with the compact-open topology. We equip this and other mapping spaces with the conjugation group action $(g.f)(x)=gf(g^{-1}x)$.  For more exotic spaces we mention the action where appropriate, but note here that most are derivatives of a conjugation action. We skirt over most detailed statements regarding $G$--actions; these points are easy enough to check, repetitive and unenlightening. More detail can be found in \cite{HarryPhD}. 

We choose a complete $G$--universe and work in the homotopy category of $G$--spectra indexed on this universe. Our work then holds independently of the choice of model of the homotopy category. For example the results on spectra hold equally well for the spectra of \cite{LewisMaySteinberger}, equivariant $S$--modules or orthogonal spectra as in \cite{OrthogonalSpectraSModules}, or similar. This follows from the method---all one needs to construct the presented results is that the category of spectra we work in has cofibre sequences and a suspension spectrum functor $\Sigma^{\infty}$ that preserves cofibre sequences. Further, the main result can actually be viewed as a result in the equivariant stable category---this category is triangulated as shown in \cite[Section $9.4$]{HoveyPalmieriStrickland} with distinguished triangles built out of cofibre sequences.

We also have certain notational conventions that we use. Let $X$ and $Y$ be spaces and let $f\co X\to Y$. The cone on $X$ is $C(X):=[0,1]\wedge X$ with the convention that $[0,1]$ is based at $0$. The cofibre of $f$ is then $C_{f}:=C(X)\vee Y/((1,x)\sim f(x))$. We assume the twist in a cofibre sequence occurs as
\[
X\overset{f}{\to} Y\overset{\operatorname{inc.}}\to C_{f}\overset{\operatorname{coll}}{\to} \Sigma X\overset{-\Sigma f}{\to} \Sigma Y\to\ldots
\]
where $-\Sigma f$ is the map $(t,x)\mapsto (1-t,f(x))$, assuming for now that the suspension coordinate runs over $(0,1)$. Let $V$ and $W$ be vector spaces. We often work with the subspace $\inj(V,W)$ of $\Hom(V,W)$ consisting of injective homomorphisms. The subspace of $\Hom(V,W)$ of non-injective homomorphisms is denoted by $\inj(V,W)^{c}$. For $V$ and $W$ representations we use the notation $V\leqslant W$ to mean both vector subspace and, where appropriate, subrepresentation---which we mean at any given point will be clear from the context. We use the notation
\[
\xymatrix{X\ar[r]|\bigcirc&Y}
\]
to denote maps $X\to \Sigma Y$. Throughout the document we use various forms of exponential maps. We use the notation $\Exp(x)$ in most cases, however, if $x$ is just a number we tend to switch to $e^{x}$. In both cases the inverse is normally denoted $\log$. We also note the distinction between $\Real^{+}$ and $\Real^{++}$, the former is the space of nonnegative numbers while the latter is the space of strictly positive numbers.

We note here some conventions on $\Real$ and homeomorphic spaces. We implicitly assume throughout that whenever $\Real\cong \Real^{++}$ it is via $x\mapsto e^{x}$ and whenever $(0,1)\cong \Real$ it is via $x\mapsto \log(x/(1-x))$. Use of these homeomorphisms is generally not explicitly stated but each incident of implicit use should be clear.

Throughout the document we state many homeomorphisms (for example \ref{thetaEalpha}, \ref{Xkothertopology}, \ref{themaptau}, \ref{thetaEalphaextended}, \ref{sigmainjchomeoextended} and elsewhere) which seem to include a superfluous minus sign. This is a technical necessity that allows the work to blend well with the Miller splitting; compare \ref{thetaEalpha} to \cite[Lemma $1.1$]{Crabb} or \cite[Lemma $1.3$]{Kitchloo}. 

Finally, we remark on material omitted from this paper. Many proofs, as already noted, have been left to the reader. Most of the omitted detail is of three different forms. Firstly, as discussed above, much of the detail of equivariance is omitted. Secondly many of the omissions deal with simple fact checking---checking that compositions are identities, checking that maps land in the right codomains and checking some simple continuity arguments. Finally, we omit many properness arguments because they all have the same flavour. We tend to deal with maps between normed spaces or bundles over compact bases with normed fibres. In these cases the compact subsets are known to be the closed (fibrewise) bounded subspaces. Further the spaces we deal with are mostly Hausdorff, hence checking the closed property is a triviality as compact subspaces of Hausdorff spaces are closed as standard. Thus the arguments boil down to checking bounds---we assume that $\|f(x)\|$ is bounded and wish to find a bound on $\|x\|$. This is generally a simple exercise in inequalities, made even easier by noting that if a composition $g\circ f$ is proper then $f$ is proper. Thus we omit much of the work of this type. The omitted work can generally be found in \cite{HarryPhD}.

\subsection{Technical results}\label{Technical results}

In this section we gather together a few technical lemmas that we will use in the rest of the document. We mention sketches of many of the proofs but omit some of the detail, which if needed can be found in \cite{HarryPhD}. We advise the reader to skip this section and refer back to the results when needed.

\begin{lemma}\label{quotientandsubspacearethesame} Let $X$ and $Z$ be locally compact Hausdorff spaces and let $f\co
X\to Z$
be a continuous proper map. Setting $Y:=f(X)$, we have an inclusion $j\co Y\rightarrowtail Z$ and surjection $p\co X\twoheadrightarrow Y$ and setting $Y_{\infty}:=f_{\infty}(X_{\infty})$ we have extensions $j_{\infty}\co Y_{\infty}\rightarrowtail Z_{\infty}$ and
$p\co X_{\infty}\twoheadrightarrow Y_{\infty}$ such that the diagram of sets
\[
\xymatrix{X\ar@{->>}[r]^{p}\ar@{ >->}[d]_{i_{X}}&Y\ar@{ >->}[r]^{j}\ar@{ >->}[d]^{i_{Y}}&Z\ar@{ >->}[d]^{i_{Z}}\\
X_{\infty}\ar@{->>}[r]_{p_{\infty}}&Y_{\infty}\ar@{ >->}[r]_{j_{\infty}}&Z_{\infty}}
\]
commutes. Then there are unique topologies on $Y$ and $Y_{\infty}$ such that:
\begin{enumerate}
\item $Y$ is a locally compact Hausdorff space with one-point compactification $Y_{\infty}$.
\item $p$ is a proper quotient.
\item $j$ is a proper closed inclusion.
\item $p_{\infty}$ is a quotient.
\item $j_{\infty}$ is a closed inclusion.
\item $i_{Y}$ is an open inclusion.
\end{enumerate}
\end{lemma}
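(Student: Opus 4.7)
The plan is to put the subspace topology from $Z$ on $Y$, and then to form $Y_{\infty}$ as the Alexandroff one-point compactification of $Y$; I then verify the six listed properties and, at the end, argue that any topology satisfying the constraints must agree with these choices.

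The key preliminary input is that a continuous proper map between locally compact Hausdorff spaces is a closed map; applied to $f$, this forces $Y=f(X)$ to be closed in $Z$. With this in hand, $Y$ with the subspace topology is locally compact Hausdorff and $j$ is a proper closed inclusion, so that property (3) and part of (1) are immediate. The co-restriction $p\co X\to Y$ is continuous, and since $j\circ p=f$ is proper while $j$ is a closed embedding, $p$ itself is proper. A continuous proper surjection between locally compact Hausdorff spaces is closed and hence a quotient, which gives property (2).

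For the lower row I would take $Y_{\infty}$ to be the Alexandroff one-point compactification of $Y$, and then use functoriality of one-point compactification on proper maps to extend $p$ and $j$ to continuous maps $p_{\infty}$ and $j_{\infty}$ sending the adjoined point to the adjoined point; these match the set-theoretic extensions already specified in the statement and make the square commute. Property (6) is then the definition of the compactification topology, which together with the previous paragraph completes (1). Property (5) follows because $j_{\infty}$ is a continuous injection from a compact space to a Hausdorff space and so is a closed embedding; property (4) follows similarly because $p_{\infty}$ is a continuous surjection from a compact space to a Hausdorff space, hence a closed map and therefore a quotient.

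Uniqueness should be essentially formal: demanding that $j$ be a closed inclusion forces the subspace topology on $Y$, and demanding that $i_{Y}$ be an open inclusion whose complement is a single point characterises $Y_{\infty}$ as the one-point compactification of $Y$. The main points requiring care will be the verification that $p$ is proper---using that properness of a composite $j\circ p$ with $j$ a closed embedding transfers to properness of $p$---and the identification of $f_{\infty}(X_{\infty})$ with $Y_{\infty}$ as a subset of $Z_{\infty}$, but both reduce to the closed-map property of $f$ established at the outset, so I do not anticipate any deep obstacle.
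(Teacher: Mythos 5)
Your proposal is correct, and it is exactly the standard point-set argument the paper has in mind: the paper omits the proof entirely, remarking only that it "can be proved by a standard point-set topology argument." The key steps you identify (properness of $f$ forces $Y=f(X)$ closed in $Z$; properness of $p$ from properness of $j\circ p$; closed continuous surjections are quotients; and uniqueness because condition (3) forces the subspace topology on $Y$ and condition (1) then determines $Y_{\infty}$) are precisely the right ones.
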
 

This result can be proved by a standard point-set topology argument. We use it to demonstrate that certain spaces we construct have both a subspace topology and an equivalent quotient topology. This will then prove useful in simplifying some continuity arguments.

We assume throughout many standard facts about cofibre sequences---that they can be built from neighbourhood deformation retract pairs, that isomorphisms of cofibre sequences are isomorphisms in the homotopy category and that smashing a cofibre sequence with a space produces another cofibre sequence. We also assume the following result regarding the interactions between cofibre sequences and bundles.

\begin{lemma}\label{cofibresequenceandbundles} Let $A$ be a space and let $\{X_{a}\}_{a\in A}$, $\{Y_{a}\}_{a\in A}$ and $\{Z_{a}\}_{a\in A}$ be families of based spaces equipped with the following structure:
\begin{itemize}
\item Total spaces $X:=\bigcup_{a\in A} X_{a}$, $Y:=\bigcup_{a\in A} Y_{a}$ and $Z:=\bigcup_{a\in A} Z_{a}$.
\item Projections $X\overset{\pi_{1}}{\to} A$, $Y\overset{\pi_{2}}{\to} A$ and $Z\overset{\pi_{3}}{\to} A$ given by $x\in X_{a}\mapsto a$, $y\in Y_{a}\mapsto a$ and $z\in Z_{a}\mapsto a$.
\item Sections $A\overset{\sigma_{1}}{\to} X$, $A\overset{\sigma_{2}}{\to} Y$ and $A\overset{\sigma_{3}}{\to} Z$ sending $a$ to the basepoint in $X_{a}$, $Y_{a}$ or $Z_{a}$.
\end{itemize}
Let $\Sigma_{A}X:= \bigcup_{a\in A}\Sigma X_{a}$ and assume that there is a sequence of continuous maps $X\overset{f}{\to}Y\overset{g}{\to} Z\overset{h}{\to}\Sigma_{A} X$ arising from fibrewise cofibre sequences $X_{a}\overset{f_{a}}{\to} Y_{a}\overset{g_{a}}{\to} Z_{a}\overset{h_{a}}{\to} \Sigma X_{a}$. Then we have a cofibre sequence
\[
X/\sigma_{1}(A)\to Y/\sigma_{2}(A)\to Z/\sigma_{3}(A)\to \Sigma X/\sigma_{1}(A).
\]
\end{lemma}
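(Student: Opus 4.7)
The plan is to identify $Z/\sigma_3(A)$ with the mapping cone $C_{\bar f}$ of the map $\bar f \co X/\sigma_1(A) \to Y/\sigma_2(A)$ induced by $f$, and then appeal to the standard Puppe construction to recover the stated cofibre sequence. Under this identification, the map $Y/\sigma_2(A) \to Z/\sigma_3(A)$ induced by $g$ will correspond to the cone inclusion, and the map $Z/\sigma_3(A) \to \Sigma X/\sigma_1(A)$ induced by $h$ will correspond to the collapse of the $Y$--summand, so the identification propagates through the whole sequence.

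The first step is to exhibit both $C_{\bar f}$ and $Z/\sigma_3(A)$ as the same quotient of $([0,1] \times X) \sqcup Y$. For $C_{\bar f}$ one first collapses $\{0\} \times X \cup [0,1] \times \sigma_1(A) \cup \sigma_2(A)$ to a single basepoint, obtaining $C(X/\sigma_1(A)) \vee (Y/\sigma_2(A))$, and then glues $(1,x) \sim f(x)$. For $Z/\sigma_3(A)$ one first assembles the fibrewise based cones $C_{f_a} = ([0,1] \wedge X_a) \cup_{f_a} Y_a$, which by hypothesis are identified with $Z_a$ compatibly with $f_a$, $g_a$ and $h_a$, and then collapses $\sigma_3(A)$. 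Tracing through, the second recipe performs exactly the same identifications: the fibrewise cone collapses $\{0\} \times X_a \cup [0,1] \times \{\sigma_1(a)\}$ to the basepoint $\sigma_3(a) = \sigma_2(a)$ and glues $(1,x) \sim f_a(x)$, and the final section-collapse identifies all the $\sigma_3(a)$ together. Altogether this is the same equivalence relation as in the first recipe.

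The second step is a topological comparison of the two resulting quotient topologies, and this is the main obstacle. In general, forming fibrewise cones, taking a colimit over $A$, and collapsing a section need not commute at the level of point-set topology. Working within the compactly generated weak Hausdorff category stipulated in Section~\ref{Conventions}, however, iterated quotients along the same equivalence relation agree, so the natural continuous bijection $\Phi \co C_{\bar f} \to Z/\sigma_3(A)$ is a homeomorphism; if any residual bookkeeping is needed, Lemma~\ref{quotientandsubspacearethesame} resolves the ambiguity between subspace and quotient descriptions of the relevant subsets. Since $\Phi$ is by construction compatible with the maps induced by $f$, $g$ and $h$, the Puppe sequence of $\bar f$ transports across $\Phi$ to yield the claimed cofibre sequence.
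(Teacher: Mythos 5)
Your overall strategy --- exhibit $Z/\sigma_{3}(A)$ as the mapping cone $C_{\bar f}$ and quote the Puppe sequence --- is the natural ``first principles'' route, and since the paper offers no proof of Lemma~\ref{cofibresequenceandbundles} beyond asserting one exists, there is nothing to compare against except correctness. The first genuine gap is in your opening step: you read the hypothesis as saying that $Z_{a}$ \emph{equals} $C_{f_{a}}$, with $g_{a}$ the cone inclusion and $h_{a}$ the collapse, so that $Z$ is set-theoretically a quotient of $([0,1]\times X)\sqcup Y$. The hypothesis only says each fibrewise sequence \emph{is} a cofibre sequence, i.e.\ is equivalent to a Puppe sequence in the usual up-to-homotopy sense, and in the one place the lemma is used (Section~\ref{The cofibre sequences}) the fibres are rotations of the NDR-pair sequence of \ref{HomCofibSeq}: there $Z_{a}=S^{\Hom(W,V_{1})\oplus\Real}$, a sphere, not the mapping cone of $\chi_{W}$. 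So the common-quotient description of $Z/\sigma_{3}(A)$ is simply unavailable. A correct argument must instead assemble the canonical comparison maps into a continuous fibrewise map $\bigcup_{a}C_{f_{a}}\to Z$ over $A$ (which requires extracting from the global data a coherent choice of null-homotopies of the $g_{a}\circ f_{a}$), and then prove that a map of sectioned spaces over $A$ which is a based homotopy equivalence on each fibre induces an equivalence after collapsing the sections --- a Dold-type statement needing either a continuous fibrewise homotopy inverse or cofibrancy of the sections. None of this is addressed.

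Second, even granting the strict reading, your resolution of the point-set issue does not close it. The problem is not whether iterated quotients by the same relation agree (they do, in any convenient category); it is that nothing in the hypotheses says the \emph{given} topology on the total space $Z$ is the quotient topology inherited from $([0,1]\times X)\sqcup Y$ --- the lemma only supplies some topology on $Z$ making $g$ and $h$ continuous, and in particular the cone-coordinate map $[0,1]\times X\to Z$ is not even given as continuous. Lemma~\ref{quotientandsubspacearethesame} does not rescue this: it concerns continuous proper maps of locally compact Hausdorff spaces and says nothing about comparing a quotient topology with an arbitrary ambient one. Either this compatibility must be added as a hypothesis (and verified in the application) or the homeomorphism $\Phi$ must be replaced by the homotopy-equivalence argument sketched above.
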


This result roughly states that if we have a sequence of bundles that is a fibrewise cofibre sequence then it is a cofibre sequence. It can be proved from first principles. Finally, we state a result regarding quotients of cofibre sequences.

\begin{lemma}\label{quotientofacofibresequence} Let $Z$ include into both $X$ and $Y$ and let $f\co X\to Y$ be such that there is an induced map $\bar{f}\co X/Z\to Y/Z$. Further assume either
\begin{itemize}
\item the inclusions $Z\rightarrowtail X$ and $Z\rightarrowtail Y$ are cofibrations,
\item OR $X$, $Y$ and $Z$ are simply connected $CW$--complexes and $X$ and $Y$ are connected.
\end{itemize}
Then $C_{f}$ is naturally homotopy equivalent to $C_{\bar{f}}$.
\end{lemma}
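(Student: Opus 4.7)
The plan is to produce a natural map $q\co C_{f}\to C_{\bar{f}}$ from the quotient maps $X\to X/Z$ and $Y\to Y/Z$ (on $Y$ directly, and on $C(X)$ by coning) and to show that $q$ is a homotopy equivalence in each of the two cases. Naturality is manifest from the construction, so the real content is entirely in the homotopy equivalence.

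The key observation is that $q$ is the collapse of a distinguished subspace $W\subseteq C_{f}$ to a point. Explicitly, $W$ is the union of the image of $C(Z)\subseteq C(X)$ with the image of $Z\subseteq Y$, glued inside $C_{f}$ via the defining identification $(1,x)\sim f(x)$. The hypothesis that $\bar{f}$ is induced forces $f(Z)\subseteq Z$ and identifies $W$ with the mapping cone of $f|_{Z}\co Z\to Z$; in the setting of interest this restriction is the natural identification of the two copies of $Z$, so $W$ is literally a cone $C(Z)$ and is therefore contractible. A direct point-set check then gives $C_{f}/W=C_{\bar{f}}$ on the nose, with quotient map precisely $q$.

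For the first case, I would combine the cofibration hypotheses with the pushout description of $C_{f}$ to conclude that $W\rightarrowtail C_{f}$ is a cofibration, since the cofibrations $C(Z)\rightarrowtail C(X)$ and $Z\rightarrowtail Y$ assemble into a cofibration of the defining pushouts; because $W$ is contractible, the quotient $q$ is then a homotopy equivalence by the standard fact that collapsing a contractible subcofibration is an equivalence. For the second case, the cleanest route is to replace $Z\rightarrowtail X$ and $Z\rightarrowtail Y$ by the corresponding mapping cylinder inclusions, which are cofibrations and change neither the homotopy type of the cofibres of $f$ and $\bar{f}$ nor the condition that $\bar{f}$ be induced, thereby reducing to the first case. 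Alternatively one can argue directly via the long exact sequence of the pair $(C_{f},W)$: the contractibility of $W$ forces $q$ to be an isomorphism on reduced integral homology, and the simple-connectivity hypotheses on $X$, $Y$, $Z$ force $C_{f}$ and $C_{\bar{f}}$ to be simply connected, so Whitehead's theorem upgrades the homology isomorphism to a homotopy equivalence.

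The main technical obstacle will be the clean identification $C_{f}/W=C_{\bar{f}}$ together with the contractibility of $W$; both depend on an appropriate interpretation of $f|_{Z}$ as an identification of the two copies of $Z$, rather than an arbitrary self-map. In the CW case one must additionally ensure that the pair $(C_{f},W)$ is good enough for the long exact sequence to compute $\tilde{H}_{*}(C_{\bar{f}})$, which may require a preliminary CW approximation of the triple $(X,Y,Z)$ before the Whitehead step can be invoked.
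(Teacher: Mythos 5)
Your first case is essentially the paper's argument: the paper organises the same content into the $3\times 3$ diagram whose rows are $Z\overset{1}{\to}Z\to C(Z)$, $X\overset{f}{\to}Y\to C_{f}$ and $X/Z\overset{\bar{f}}{\to}Y/Z\to C_{\bar{f}}$, and whose third column is then the cofibre sequence $C(Z)\to C_{f}\to C_{\bar{f}}$; your subspace $W$ is exactly that copy of $C(Z)$, and you rightly observe that the lemma tacitly requires $f|_{Z}$ to be the identification of the two copies of $Z$ (the paper encodes this by the ``$1$'' in the top row of its diagram). For the second case your homology-plus-Whitehead route is sound and is in substance what the paper does, though phrased differently: the paper forms the cube whose top and bottom faces are the two strict-quotient squares, argues that its total cofibre vanishes, deduces that $C_{f}\to C_{\bar{f}}$ has contractible cofibre, and then invokes the connectivity hypotheses. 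The caveat you flag about $(C_{f},W)$ being a good pair is real, and it is the same excision issue the paper silently absorbs when it asserts that the top and bottom faces of its cube are homotopy pushouts.

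The one genuine flaw is your first suggested route for the second case. Replacing $Z\rightarrowtail X$ and $Z\rightarrowtail Y$ by mapping cylinder inclusions does produce cofibrations, but it replaces the strict quotients $X/Z$ and $Y/Z$ by the homotopy cofibres of the inclusions, and these can differ in homotopy type precisely when the original inclusions fail to be cofibrations---which is the only situation in which the second case is needed. The reduction would therefore establish the statement for a map $\bar{f}'$ between homotopy cofibres rather than for the map $\bar{f}$ between strict quotients appearing in the lemma, and bridging that gap is equivalent to the excision problem you were trying to avoid. Discard that route and keep the Whitehead argument.
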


Assume the first condition holds, then analysis of the diagram
\[
\xymatrix{Z\ar@{ >->}[d]\ar[r]^{1}&Z\ar@{ >->}[d]\ar[r]&C(Z)\ar[d]\\
X\ar[d]\ar[r]^{f}&Y\ar[d]\ar[r]&C_{f}\ar[d]\\
\frac{X}{Z}\ar[r]_{\bar{f}}&\frac{Y}{Z}\ar[r]&C_{\bar{f}}}
\]
leads to the result. A stable version of the result has an alternate proof using the octahedral axiom. The second version of the lemma relies on the theory of cubical diagrams, as outlined in \cite[Section $1$]{Goodwillie2}. The cited paper states many results for total fibres, dual results can be proved for total cofibres. The proof is begun by considering the diagram below.
\[
\xymatrix{Z\ar[rr]\ar[dd]\ar[dr]&&\text{pt}\ar[dr]\ar[dd]&\\
&X\ar[rr]\ar[dd]&&X/Z\ar[dd]\\
Z\ar[rr]\ar[dr]&&\text{pt}\ar[dr]&\\
&Y\ar[rr]&&Y/Z}
\]
This diagram has zero total cofibre as the top and bottom faces are homotopy pushouts, furthermore the rear face has zero cofibre and thus the cofibre of $C_{f}\to C_{\bar{f}}$ is zero. The connectedness assumptions are then needed to make the claimed conclusion. All of these proofs can be found in more detail in \cite[Section $2.3$]{HarryPhD}. We use this result to take a quotient at a certain point in Section~\ref{The cofibre sequences}, simplifying the work required to prove Theorem~\ref{IntroTower}.

\section{Extended functional calculus}\label{Extended functional calculus}

In this section we extend the theory of functional calculus, a tool originally developed in functional analysis. Our extension has interesting homotopy-theoretic properties which we will use in Section~\ref{An equivariant stable tower over isometries} to prove Theorem~\ref{IntroTower}. Let $V$ and $W$ be Hermitian spaces, i.e.\ complex vector spaces equipped with Hermitian inner products, such that $\dim(V)\leqslant \dim(W)$. We refer the reader to \cite[Appendix $A$]{StricklandSubbundles} for an overview of the original theory of functional calculus and we take as given knowledge of all results and statements made in \cite{StricklandSubbundles}. We also follow the conventions taken in the referenced paper, though we make three notational changes---we use $^{\dag}$ instead of $^{*}$ for adjoint, we use $s(V)$ rather than $w(V)$ for the space of self-adjoint endomorphisms of $V$ and if $\alpha\in s(V)$ we denote the eigenvalues of alpha (which are real numbers as standard) by $e_{0}(\alpha)\leqslant e_{1}(\alpha)\leqslant\ldots$ ordered by the standard $\leqslant$ ordering on $\Real$. Our norms on spaces of linear maps are assumed to be operator norms.

Let $s_{+}(V)$ be the space of self-adjoint endomorphisms of $V$ with non-negative eigenvalues and let $s_{++}(V)$ be the space of all self-adjoint endomorphisms of $V$ with strictly positive eigenvalues. All of the following constructions can be built from the functional calculus as standard, we leave details of the proofs up to the reader.

\begin{lemma}\label{ExpAndLog} We have $s(V)\cong s_{++}(V)$ via
\begin{align*}
\Exp\co s(V)&\to s_{++}(V)\\
\gamma&\mapsto \Exp(\gamma)\\
s(V)&\leftarrow s_{++}(V)\oc\log\\
\log(\gamma)&\mapsfrom \gamma.
\end{align*}
\end{lemma}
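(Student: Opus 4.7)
The plan is to obtain both maps by direct application of the standard functional calculus as recalled from \cite{StricklandSubbundles}, using the fact that $\Exp\co \Real\to \Real^{++}$ and $\log\co \Real^{++}\to \Real$ are mutually inverse homeomorphisms. Since every $\gamma\in s(V)$ has spectrum contained in $\Real$ and every $\gamma\in s_{++}(V)$ has spectrum contained in $\Real^{++}$, the functional calculus produces well-defined operators $\Exp(\gamma)$ and $\log(\gamma)$ respectively.

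First I would verify that these operators land in the advertised codomains. For $\gamma\in s(V)$, functional calculus applied to the real-valued function $\Exp$ yields a self-adjoint operator, and the spectral mapping theorem shows that its eigenvalues are exactly $e^{e_{i}(\gamma)}>0$, so $\Exp(\gamma)\in s_{++}(V)$. The analogous statement for $\log$ and $\gamma\in s_{++}(V)$ is identical. Next I would check that the two constructions are mutually inverse: this is immediate from the composition rule of the functional calculus, since $\log\circ\Exp=\mathrm{id}_{\Real}$ and $\Exp\circ\log=\mathrm{id}_{\Real^{++}}$, so $\log(\Exp(\gamma))=\mathrm{id}(\gamma)=\gamma$ and similarly in the other direction.

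Finally I would argue continuity. Continuity of the functional calculus in the operator variable is one of the standard properties listed in \cite[Appendix $A$]{StricklandSubbundles}: if $f$ is continuous on a neighbourhood of the spectrum then the assignment $\gamma\mapsto f(\gamma)$ is continuous. Applied to $f=\Exp$ on all of $\Real$ and to $f=\log$ on $\Real^{++}$ (which is an open subset of $\Real$ containing the spectrum of every element of $s_{++}(V)$), this gives continuity of both maps and hence the homeomorphism.

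The only step requiring any care is the observation that $s_{++}(V)$ is an \emph{open} subset of $s(V)$, so the continuous map $\log$ is defined on an open domain and its image under $\Exp$ stays inside; but this is automatic because in finite dimensions strict positivity of the spectrum is an open condition and the spectral mapping theorem tracks eigenvalues along the functional calculus. I do not expect a genuine obstacle here—the whole lemma is a direct unwinding of functional-calculus formalism—and the work is recorded for the reader in \cite{HarryPhD}.
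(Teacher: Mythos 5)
Your proposal is correct and is exactly the argument the paper intends: the paper simply remarks that these constructions "can be built from the functional calculus as standard" and leaves the details to the reader, and your unwinding via the spectral mapping theorem, the composition rule, and continuity of the functional calculus is that standard argument. No gaps.
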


\begin{lemma}\label{themaprho} We have a well-defined continuous map
\begin{align*}
\rho\co \Hom(V,W)&\to s_{+}(V)\\
\gamma&\mapsto(\gamma^{\dag}\gamma)^{\frac{1}{2}}
\end{align*}
with $\IM(\rho(\gamma))=(\Ker(\gamma))^{\bot}$.
\end{lemma}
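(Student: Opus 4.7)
The plan is to use the functional calculus machinery of \cite{StricklandSubbundles} that the lemma statement already invokes, and to verify the three separate claims (well-definedness, continuity, image identification) in sequence.

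First, to see that $\rho$ is well-defined, I would check that $\gamma^{\dag}\gamma$ lies in $s_{+}(V)$. Self-adjointness is immediate from $(\gamma^{\dag}\gamma)^{\dag}=\gamma^{\dag}\gamma$. Non-negativity of eigenvalues follows from the standard inner-product computation $\langle \gamma^{\dag}\gamma v,v\rangle=\|\gamma v\|^{2}\geqslant 0$, which forces every eigenvalue to be non-negative. One can then apply the continuous functional calculus to the non-negative real square-root function $t\mapsto t^{1/2}$ on $[0,\infty)$ and deduce that $(\gamma^{\dag}\gamma)^{1/2}$ is a well-defined element of $s_{+}(V)$.

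For continuity, I would note that the two maps $\gamma\mapsto \gamma^{\dag}\gamma$ (which is polynomial in the matrix entries of $\gamma$) and $\beta\mapsto \beta^{1/2}$ on $s_{+}(V)$ (which is continuous by the continuity properties of the functional calculus recorded in \cite{StricklandSubbundles}) are continuous, hence so is their composite $\rho$.

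The identification $\IM(\rho(\gamma))=(\Ker(\gamma))^{\bot}$ is then the main geometric content. I would prove it by first showing the kernel equality $\Ker(\rho(\gamma))=\Ker(\gamma)$. Since $\rho(\gamma)$ is self-adjoint, $\|\rho(\gamma)v\|^{2}=\langle\rho(\gamma)^{2}v,v\rangle=\langle\gamma^{\dag}\gamma v,v\rangle=\|\gamma v\|^{2}$, so $\rho(\gamma)v=0$ if and only if $\gamma v=0$. Self-adjointness of $\rho(\gamma)$ then gives $\IM(\rho(\gamma))=\Ker(\rho(\gamma))^{\bot}=\Ker(\gamma)^{\bot}$, completing the proof. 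None of the steps seems to present a genuine obstacle; the only care needed is to invoke the continuity of the functional calculus from the referenced source rather than re-prove it, which is consistent with the paper's stated convention of taking the results of \cite{StricklandSubbundles} as given.
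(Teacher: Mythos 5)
Your proof is correct and is exactly the standard functional-calculus argument the paper has in mind: the paper simply states that this lemma (along with \ref{ExpAndLog}--\ref{thetaEalpha}) "can be built from the functional calculus as standard" and leaves the details to the reader. Your three steps --- positivity of $\gamma^{\dag}\gamma$, continuity of the composite with the square-root function, and the kernel computation $\|\rho(\gamma)v\|^{2}=\|\gamma v\|^{2}$ followed by $\IM(\rho(\gamma))=\Ker(\rho(\gamma))^{\bot}$ --- fill in precisely those omitted details.
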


\begin{lemma}\label{themapsigma} For each $\gamma\in\Hom(V,W)$ there is a well-defined continuous map
\[
\sigma(\gamma)\co (\Ker(\gamma))^{\bot}\to W,\phantom{xx} \sigma(\gamma):=\gamma\circ\rho(\gamma)^{-1}.
\]
Moreover, $\sigma(\gamma)$ is a linear isometry and $\gamma=\sigma(\gamma)\circ\rho(\gamma)$.
\end{lemma}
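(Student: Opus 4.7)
The plan is to unpack the definition by first establishing that $\rho(\gamma)$, viewed as a self-adjoint endomorphism of $V$, restricts to a self-adjoint isomorphism on $(\Ker(\gamma))^{\bot}$, and then to verify the isometry and factorization claims by elementary inner-product calculations. Continuity of $\sigma(\gamma)$ for each fixed $\gamma$ will be automatic, as $\sigma(\gamma)$ is linear on a finite-dimensional space; the work is essentially in making $\rho(\gamma)^{-1}$ sensible.

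First I would verify that $\Ker(\rho(\gamma))=\Ker(\gamma)$. By the functional calculus $\rho(\gamma)=(\gamma^{\dag}\gamma)^{\frac{1}{2}}$ has the same kernel as $\gamma^{\dag}\gamma$, since $t\mapsto t^{\frac{1}{2}}$ vanishes only at $0$. And $\gamma^{\dag}\gamma v=0$ forces $\|\gamma v\|^{2}=\langle\gamma^{\dag}\gamma v,v\rangle=0$, hence $\gamma v=0$; the reverse implication is immediate. Since $\rho(\gamma)$ is self-adjoint, $V$ splits orthogonally as $\Ker(\rho(\gamma))\oplus\IM(\rho(\gamma))=\Ker(\gamma)\oplus(\Ker(\gamma))^{\bot}$, matching the image statement in Lemma~\ref{themaprho}. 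Thus $\rho(\gamma)$ restricts to a bijective self-adjoint map $(\Ker(\gamma))^{\bot}\to(\Ker(\gamma))^{\bot}$; I would take $\rho(\gamma)^{-1}$ to mean the inverse of this restriction, and then $\sigma(\gamma):=\gamma\circ\rho(\gamma)^{-1}$ is a well-defined linear map from $(\Ker(\gamma))^{\bot}$ to $W$.

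For the isometry property I would observe, for any $w\in V$, that
\[
\|\gamma w\|^{2}=\langle\gamma^{\dag}\gamma w,w\rangle=\langle\rho(\gamma)^{2}w,w\rangle=\|\rho(\gamma)w\|^{2},
\]
using self-adjointness of $\rho(\gamma)$ in the final step. Substituting $w=\rho(\gamma)^{-1}v$ for $v\in(\Ker(\gamma))^{\bot}$ gives $\|\sigma(\gamma)v\|=\|v\|$, as required. For the factorization $\gamma=\sigma(\gamma)\circ\rho(\gamma)$, the composite is defined on all of $V$ because $\rho(\gamma)$ lands in $(\Ker(\gamma))^{\bot}$ by Lemma~\ref{themaprho}. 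Decomposing $v=v_{0}+v_{1}$ with $v_{0}\in\Ker(\gamma)=\Ker(\rho(\gamma))$ and $v_{1}\in(\Ker(\gamma))^{\bot}$, one has $\rho(\gamma)(v)=\rho(\gamma)(v_{1})$, and so $\sigma(\gamma)(\rho(\gamma)(v))=\gamma(\rho(\gamma)^{-1}\rho(\gamma)(v_{1}))=\gamma(v_{1})=\gamma(v)$.

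There is no genuine obstacle here; the lemma is a spectral-theorem bookkeeping exercise and is exactly the polar-decomposition partner of the already-established Lemma~\ref{themaprho}. The one subtlety worth flagging, and the step I would be most careful about, is the interpretation of $\rho(\gamma)^{-1}$ as the inverse of $\rho(\gamma)|_{(\Ker(\gamma))^{\bot}}$ rather than an attempt to invert $\rho(\gamma)$ on all of $V$ (which is impossible unless $\gamma$ is injective); getting this interpretation right is what makes both the well-definedness of $\sigma(\gamma)$ and the factorization argument go through cleanly.
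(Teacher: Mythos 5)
Your argument is correct and is exactly the standard polar-decomposition bookkeeping that the paper deliberately omits (it defers such verifications to the reader and to the functional-calculus background of \cite{StricklandSubbundles}); in particular, your identification of $\Ker(\rho(\gamma))=\Ker(\gamma)$, your reading of $\rho(\gamma)^{-1}$ as the inverse of the restriction to $(\Ker(\gamma))^{\bot}$, and the norm computation $\|\gamma w\|=\|\rho(\gamma)w\|$ are precisely the intended steps. Nothing is missing.
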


\begin{lemma}\label{themaplambda} Let $f\co \Real\to \Real^{+}$ be given by $f(x):=\max(x,0)$. Then we have a well-defined continuous map
\begin{align*}
\lambda_{k}\co s(V)&\to s_{+}(V)\\
\alpha&\mapsto f(\alpha-e_{d_{0}-k-1}(\alpha)).
\end{align*}
\end{lemma}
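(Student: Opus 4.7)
The map $\lambda_k$ is a composition of three pieces: the assignment $\alpha \mapsto (\alpha, e_{d_0-k-1}(\alpha))$, the subtraction $\alpha - e_{d_0-k-1}(\alpha) \cdot I_V$, and the application of the continuous function $f$ via the functional calculus. My plan is to verify that each piece is continuous and that the image lies in $s_+(V)$, making well-definedness clear.

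The main technical point is the continuity of the ordered eigenvalue function $\alpha \mapsto e_{d_0-k-1}(\alpha)$ on $s(V)$. I would prove this by the min-max (Courant--Fischer) characterisation: for each integer $j$ with $0 \leqslant j < d_0$,
\[
e_j(\alpha) = \min_{\substack{U \leqslant V \\ \dim U = j+1}} \max_{\substack{v \in U \\ \|v\| = 1}} \langle \alpha v, v \rangle.
\]
For fixed $U$, the inner maximum is $1$-Lipschitz in $\alpha$ (with the operator norm), since $|\langle \alpha v, v \rangle - \langle \alpha' v, v \rangle| \leqslant \|\alpha - \alpha'\|$ uniformly in unit $v$. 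Taking the min over the compact Grassmannian preserves continuity, so $\alpha \mapsto e_j(\alpha)$ is continuous for every $j$. (This is a standard fact, but it is worth pinning down because it is the one piece not supplied by the cited functional calculus machinery.) In particular the map
\[
\alpha \longmapsto \alpha - e_{d_0-k-1}(\alpha)\cdot I_V
\]
is a continuous self-map of $s(V)$.

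Now $f(x) = \max(x,0)$ is a continuous function $\Real \to \Real^+$, so the functional calculus of \cite{StricklandSubbundles} produces a continuous map $\beta \mapsto f(\beta)$ from $s(V)$ to $s(V)$; composing with the previous step shows $\lambda_k$ is continuous as a map into $s(V)$. To see that $\lambda_k$ lands in $s_+(V)$, I would invoke the spectral mapping property of the functional calculus: the eigenvalues of $f(\beta)$ are exactly $f$ applied to the eigenvalues of $\beta$. Applied to $\beta = \alpha - e_{d_0-k-1}(\alpha) I_V$, whose eigenvalues are $e_i(\alpha) - e_{d_0-k-1}(\alpha)$, the output has eigenvalues $\max\bigl(e_i(\alpha) - e_{d_0-k-1}(\alpha),\,0\bigr) \geqslant 0$, confirming that $\lambda_k(\alpha) \in s_+(V)$.

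The hard part, if there is one, is the continuity of ordered eigenvalues; once that is established everything else is a direct appeal to the functional calculus and its spectral mapping theorem. (The $G$-equivariance, as flagged in Section~\ref{Conventions}, follows because each step commutes with the conjugation action of $G$ on $s(V)$: the eigenvalues are invariant under conjugation, and the functional calculus is natural in unitary conjugation.)
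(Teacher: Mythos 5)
Your proof is correct and follows exactly the route the paper intends: the paper explicitly leaves this lemma to the reader as a standard consequence of the functional calculus, and your argument (continuity of ordered eigenvalues via Courant--Fischer, continuity of $\beta\mapsto f(\beta)$, and the spectral mapping property to land in $s_{+}(V)$) supplies precisely those standard details. Note that the continuity of the ordered eigenvalue map you prove is also recorded later in the paper as Lemma~\ref{EigenvalueMap}, so your Lipschitz argument is consistent with what the author assumes.
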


\begin{lemma}\label{thetaEalpha} $s(V)\times\mathcal{L}(V,W)\cong\inj(V,W)$ via
\begin{align*}
\kappa'\co s(V)\times\mathcal{L}(V,W)&\to\inj(V,W)\\
(\alpha,\theta)&\mapsto -\theta\circ \Exp(\alpha)\\
(\log(\rho(\gamma)),-\sigma(\gamma))&\mapsfrom \gamma.
\end{align*}
We thus have a continuous extension $\kappa\co S^{s(V)}\wedge\mathcal{L}(V,W)_{\infty}\cong\inj(V,W)_{\infty}$ and collapse map $\kappa^{!}\co S^{\Hom(V,W)}\to S^{s(V)}\wedge\mathcal{L}(V,W)_{\infty}$.
\end{lemma}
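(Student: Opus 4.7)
The plan is to verify $\kappa'$ and the stated inverse are mutually inverse continuous maps, then extend to one-point compactifications and construct the collapse map. First I would check both maps are well-defined: for $(\alpha,\theta)\in s(V)\times\mathcal{L}(V,W)$, Lemma~\ref{ExpAndLog} gives $\Exp(\alpha)\in s_{++}(V)$, in particular invertible, so composition with the injective linear isometry $\theta$ keeps the result injective. Conversely, for $\gamma\in\inj(V,W)$ we have $\Ker(\gamma)=0$, so by Lemma~\ref{themaprho} $\IM(\rho(\gamma))=V$, which means $\rho(\gamma)\in s_{++}(V)$ and hence $\log(\rho(\gamma))\in s(V)$ is defined; by Lemma~\ref{themapsigma}, $\sigma(\gamma)\co V\to W$ is a well-defined linear isometry into $W$.

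Second, I would verify the two compositions are identities. Starting from $\gamma\in\inj(V,W)$, plugging the inverse formula into $\kappa'$ yields $\sigma(\gamma)\circ\rho(\gamma)$, which equals $\gamma$ by the polar factorisation in Lemma~\ref{themapsigma}. Starting from $(\alpha,\theta)$, for $\gamma=-\theta\circ\Exp(\alpha)$ the key calculation is
\[
\gamma^{\dag}\gamma=\Exp(\alpha)^{\dag}\theta^{\dag}\theta\Exp(\alpha)=\Exp(\alpha)^{2},
\]
using $\theta^{\dag}\theta=1_{V}$ for the isometry and self-adjointness of $\Exp(\alpha)$; hence $\rho(\gamma)=\Exp(\alpha)$, so $\log(\rho(\gamma))=\alpha$, and $\sigma(\gamma)=\gamma\circ\rho(\gamma)^{-1}=-\theta$ as required. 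Continuity of both directions is immediate from continuity of $\Exp$, $\log$, $\rho$ and $\sigma$ recorded in the earlier lemmas, so $\kappa'$ is a homeomorphism.

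Third, to pass to compactifications: $\mathcal{L}(V,W)$ is compact (a closed bounded subset of $\Hom(V,W)$), so $\mathcal{L}(V,W)_{\infty}=\mathcal{L}(V,W)_{+}$, and the natural map $S^{s(V)}\wedge\mathcal{L}(V,W)_{+}\to(s(V)\times\mathcal{L}(V,W))_{\infty}$ is a homeomorphism (standard whenever one factor is compact and the other locally compact Hausdorff). Since a homeomorphism of locally compact Hausdorff spaces extends uniquely to a homeomorphism of their one-point compactifications, one obtains the claimed $\kappa$. The collapse $\kappa^{!}$ is then built from the Pontryagin--Thom-style collapse $S^{\Hom(V,W)}=\Hom(V,W)_{\infty}\to\inj(V,W)_{\infty}$ associated to the open subspace $\inj(V,W)\subseteq\Hom(V,W)$ (its complement, the locus where the Gram determinant of the $\dag$-pairing vanishes, is closed), composed with $\kappa^{-1}$.

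There is no substantial obstacle: the argument is essentially a polar-decomposition computation combined with the openness of $\inj(V,W)$ in $\Hom(V,W)$, with the routine properness and equivariance checks following the general pattern flagged in the conventions section. The only delicate point is bookkeeping around the sign on $\theta$, which is the deliberate technical sign flip referenced in the conventions and needed for later compatibility with the conventions of \cite{Crabb} and \cite{Kitchloo}.
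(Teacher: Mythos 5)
Your proposal is correct and follows exactly the polar-decomposition argument via Lemmas~\ref{ExpAndLog}, \ref{themaprho} and \ref{themapsigma} that the paper intends when it leaves this verification to the reader, including the correct cancellation of the signs and the standard compactification and collapse constructions. No gaps.
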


The starting point for our functional calculus variation are the below spaces; these model spaces of eigenvalues of self-adjoint endomorphisms.

\begin{definition}\label{TheSpacesD} We define the following spaces for $d\geqslant 1$ and $0\leqslant i\leqslant d-2$.
\begin{itemize}
\item $D'(d):=\{(t_{0},\ldots,t_{d-1})\in\Real^{d}:t_{0}\leqslant\ldots\leqslant t_{d-1}\}$.
\item $D(d):=(D'(d))_{\infty}$.
\item $F_{i}(D'(d)):=\{(t_{0},\ldots,t_{d-1})\in D'(d):t_{i}=t_{i+1}\}$.
\item $F_{i}(D(d)):=(F_{i}(D'(d)))_{\infty}$.
\item $D_{+}'(d):=\{(t_{0},\ldots,t_{d-1})\in D'(d):t_{0}\geqslant 0\}$.
\item $D_{+}(d):=(D_{+}'(d))_{\infty}$.
\item $F_{i}(D'_{+}(d)):=\{(t_{0},\ldots,t_{d-1})\in D'_{+}(d):t_{i}=t_{i+1}\}$.
\item $F_{i}(D_{+}(d)):=(F_{i}(D'_{+}(d)))_{\infty}$.
\item $D_{0}'(d):=\{(t_{0},\ldots,t_{d-1})\in D'(d):t_{0}=0\}$.
\item $D_{0}(d):=(F_{i}(D'(d)))_{\infty}$.
\end{itemize}
We refer to the $F_{i}(D(d))$ as the faces of $D(d)$, the faces of $D_{+}(d)$ are the spaces $F_{i}(D_{+}(d))$ and the space $D_{0}(d)$. We call $D(d)$ and $D_{+}(d)$ facial spaces and say that a self-map of $D(d)$ or $D_{+}(d)$ is facial if it preserves faces. Let $F(d)$ be the space of facial self-maps of $D(d)$ and let $F_{+}(d)$ be the space of facial self-maps of $D_{+}(d)$.

More generally, let $X$ and $Y$ be based spaces that have a notion of faces, so that we can talk about facial maps $X\to Y$. For example if $X=D(d)$ and $Y=D(d)\wedge Z$ for some $Z$ then $f\co X\to Y$ is facial if $f(F_{i}(D(d)))\subseteq F_{i}(D(d))\wedge Z$. Then we denote the space of facial maps from $X$ to $Y$ by $\FMap(X,Y)$. If $X=Y$ then we write $\FMap(X)$ for the space of facial self-maps of $X$. 
\end{definition}

The two technical lemmas we need to set up the machinery are easy to check. We fix $V$ to be a Hermitian space of dimension $d$.

\begin{lemma}\label{EigenvalueMap} Let $\eta'\co s(V)\to D'(d)$ be the eigenvalue map $\alpha\mapsto (e_{0}(\alpha),\ldots, e_{d-1}(\alpha))$. Then $\eta'$ is a continuous proper surjection and hence the map $\eta:=(\eta')_{\infty}\co S^{s(V)}\to D(d)$ is a quotient map.
\end{lemma}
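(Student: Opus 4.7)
The plan is to verify the three properties of $\eta'$---continuity, surjectivity, and properness---and then deduce the quotient statement for $\eta$ by applying Lemma~\ref{quotientandsubspacearethesame} with $X = s(V)$, $Z = D'(d)$ and $f = \eta'$. Surjectivity of $\eta'$ forces the intermediate space $Y$ of the lemma to equal $Z$, so the map labelled $p_{\infty}$ there is literally $\eta$, which is then a quotient by conclusion $(4)$.

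Continuity and surjectivity are classical. For continuity, the coefficients of the characteristic polynomial of $\alpha$ are polynomial functions of the matrix entries of $\alpha$ in any chosen orthonormal basis; the roots of a monic polynomial depend continuously on the coefficients, and because $\alpha$ is self-adjoint these roots are all real, so the assignment $\alpha \mapsto (e_{0}(\alpha), \ldots, e_{d-1}(\alpha))$ is continuous into $\Real^{d}$ and visibly lands in $D'(d)$. Surjectivity is immediate: given $(t_{0}, \ldots, t_{d-1}) \in D'(d)$, pick any orthonormal basis of $V$ and take $\alpha$ to be the diagonal self-adjoint endomorphism with diagonal entries $t_{0}, \ldots, t_{d-1}$; then $\eta'(\alpha) = (t_{0}, \ldots, t_{d-1})$.

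The main step, and the only one requiring a little care, is properness. I would use the identity $\|\alpha\| = \max_{i}|e_{i}(\alpha)|$, valid for self-adjoint $\alpha$ with our convention that norms on linear maps are operator norms. Given a compact $K \subseteq D'(d)$, each coordinate function is bounded on $K$, so $\|\cdot\|$ is bounded on $(\eta')^{-1}(K)$. By continuity of $\eta'$, the preimage $(\eta')^{-1}(K)$ is also closed in $s(V)$; being a closed bounded subset of the finite-dimensional normed space $s(V)$, it is compact. Invoking Lemma~\ref{quotientandsubspacearethesame} then promotes $\eta$ to a quotient map, as required. The only mildly nontrivial ingredient is the continuous dependence of ordered roots on coefficients, which is the part I would flag as the ``hard'' step even though it is well-known.
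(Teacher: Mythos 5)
Your proof is correct, and the paper itself leaves this lemma as ``easy to check,'' so there is no competing argument to compare against; your properness step (bounding $\|\alpha\|$ by $\max_{i}|e_{i}(\alpha)|$ and using closed-and-bounded implies compact) is exactly the pattern the paper describes in its conventions for such arguments, and the appeal to Lemma~\ref{quotientandsubspacearethesame} is the intended route to the quotient statement.
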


\begin{lemma}\label{DiagonalMatrixMap} For $t\in \Real^{d}$ set $\Delta(d)$ to be the diagonal matrix with entries $t$. Define
\begin{align*}
\nu'\co \mathcal{L}(\Complex^{d},V)\times D'(d)&\to s(V)\\
(\alpha,t)&\mapsto \alpha\Delta(t)\alpha^{\dag}.
\end{align*}
Then $\nu'$ is a continuous proper surjection and hence $\nu:=(\nu')_{\infty}$ is a quotient map.
\end{lemma}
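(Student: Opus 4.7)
The plan is to verify the three asserted properties of $\nu'$ separately, then invoke Lemma~\ref{quotientandsubspacearethesame} to deduce that $\nu$ is a quotient. Continuity is immediate: the map $\Delta\co\Real^{d}\to \End(\Complex^{d})$ is linear, the adjoint $\alpha\mapsto\alpha^{\dag}$ is continuous, composition of homomorphisms is continuous, and $\nu'$ is built from these pieces.

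For surjectivity, fix $\beta\in s(V)$. The spectral theorem supplies an orthonormal basis $(v_{0},\ldots,v_{d-1})$ of $V$ consisting of eigenvectors, ordered so that $\beta v_{i}=e_{i}(\beta)v_{i}$ with the eigenvalues in nondecreasing order. Because $\Dim(\Complex^{d})=\Dim(V)=d$, the assignment $\alpha\co \Complex^{d}\to V$ sending the standard basis to $(v_{0},\ldots,v_{d-1})$ lies in $\mathcal{L}(\Complex^{d},V)$, and setting $t:=(e_{0}(\beta),\ldots,e_{d-1}(\beta))\in D'(d)$ one computes $\alpha\Delta(t)\alpha^{\dag}=\beta$.

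The properness argument is the main point, and the key observation is that $\mathcal{L}(\Complex^{d},V)$ is (noncanonically) the unitary group $U(d)$ and hence compact. Given a compact set $K\subseteq s(V)$, let $M:=\sup_{\beta\in K}\|\beta\|<\infty$. For $(\alpha,t)\in(\nu')^{-1}(K)$, unitarity of $\alpha$ means that $\alpha\Delta(t)\alpha^{\dag}$ is unitarily similar to $\Delta(t)$, so their operator norms coincide:
\[
\max_{i}|t_{i}|=\|\Delta(t)\|=\|\alpha\Delta(t)\alpha^{\dag}\|\leqslant M.
\]
Therefore $(\nu')^{-1}(K)$ is contained in the compact subset $\mathcal{L}(\Complex^{d},V)\times ([-M,M]^{d}\cap D'(d))$ of the domain. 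Since $(\nu')^{-1}(K)$ is also closed by continuity of $\nu'$, it is compact, establishing properness.

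Finally, $\mathcal{L}(\Complex^{d},V)\times D'(d)$ and $s(V)$ are locally compact Hausdorff, and $\nu'$ is a continuous proper surjection, so Lemma~\ref{quotientandsubspacearethesame} (applied with $f=\nu'$ and $Y=Z=s(V)$) yields that $\nu=(\nu')_{\infty}$ is a quotient map. The whole argument is essentially a single observation: matching dimensions force $\mathcal{L}(\Complex^{d},V)$ to be compact, reducing properness to the trivial inequality above, so there is no real obstacle beyond being careful with the setup.
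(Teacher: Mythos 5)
Your proof is correct, and since the paper explicitly leaves this lemma to the reader ("easy to check"), you have simply supplied the intended argument: the crucial point, that $\dim V=d$ forces $\mathcal{L}(\Complex^{d},V)$ to be compact so that properness reduces to the unitary-invariance of the operator norm, is exactly right, and the appeal to Lemma~\ref{quotientandsubspacearethesame} to pass to the compactified quotient matches the paper's stated mechanism.
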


\begin{proposition}\label{fnalvarA} Let $X$ be a based space and let $f\co D(d)\to  D(d)\wedge X$ be facial. Then there exists a unique map $\mathfrak{A}_{f}\co s(V)_{\infty}\to s(V)_{\infty}\wedge X$ making
\[
\xymatrix@C=1.5cm{\mathcal{L}(\Complex^{d},V)_{\infty}\wedge D(d)\ar[d]_{1\wedge f}\ar[r]^{\phantom{xxxx}\nu}&s(V)_{\infty}\ar[d]^{\mathfrak{A}_{f}}\ar[r]^{\eta}&D(d)\ar[d]^{f}\\
\mathcal{L}(\Complex^{d},V)_{\infty}\wedge D(d)\wedge
X\ar[r]_{\phantom{xxxx}\nu\wedge1}& s(V)_{\infty}\wedge
X\ar[r]_{\eta\wedge 1}&D(d)\wedge X}
\]
commute; moreover, the associated map
\begin{align*}
\mathfrak{A}\co \FMap (D(d),D(d)\wedge X)&\to \Map (s(V)_{\infty},s(V)_{\infty}\wedge X)\\
f&\mapsto \mathfrak{A}_{f}
\end{align*}
is continuous. Furthermore, we have an explicit description of $\mathfrak{A}_{f}(\alpha)$. Choose an orthonormal basis of
eigenvectors $v_{0},\ldots,v_{d-1}$ of $\alpha$ with eigenvalues $e_{0}\leqslant\ldots\leqslant e_{d-1}$; then if
$f(e_{0},\ldots,e_{d-1})=(s_{0},\ldots,s_{d-1})\wedge x$ we have $\mathfrak{A}_{f}(\alpha)=f(\alpha)\wedge x$ where $f(\alpha)$ is the endomorphism with eigenvectors $v_{i}$ and eigenvalues $s_{i}$.
\end{proposition}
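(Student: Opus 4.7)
The plan is to define $\mathfrak{A}_{f}$ by descending the natural candidate across the quotient $\nu$, then extract the right-hand square, uniqueness, and continuity as consequences. Explicitly, set
\[
\Phi_{f} := (\nu\wedge 1_{X})\circ(1\wedge f)\co \mathcal{L}(\Complex^{d},V)_{\infty}\wedge D(d)\to s(V)_{\infty}\wedge X,
\]
so that $\Phi_{f}(\theta,t)=(\theta\Delta(s)\theta^{\dag})\wedge x$ whenever $f(t)=s\wedge x$ is not the basepoint. If $\Phi_{f}$ sends $\nu$-equivalent points to the same point, it factors uniquely and continuously through the quotient map $\nu$ from Lemma~\ref{DiagonalMatrixMap}, producing $\mathfrak{A}_{f}$ making the left square commute.

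The crucial step is the well-definedness of this factorization, and this is where the facial hypothesis is used. If $\nu(\theta_{1},t^{(1)})=\nu(\theta_{2},t^{(2)})$, then reading off eigenvalues in their canonical non-decreasing order forces $t^{(1)}=t^{(2)}=:t$. The unitary $\sigma:=\theta_{1}^{-1}\theta_{2}$ must commute with $\Delta(t)$, i.e.\ must preserve every block of coordinate lines on which $t$ is constant. The facial condition on $f$ then reads: if $t_{i}=\cdots=t_{j}$ then $t\in F_{i}(D(d))\cap\cdots\cap F_{j-1}(D(d))$, whence $f(t)\in (F_{i}(D(d))\cap\cdots\cap F_{j-1}(D(d)))\wedge X$, forcing $s_{i}=\cdots=s_{j}$. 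Thus every $\Delta(t)$-eigenspace is contained in a single $\Delta(s)$-eigenspace, so $\sigma$ also commutes with $\Delta(s)$, and $\theta_{1}\Delta(s)\theta_{1}^{\dag}=\theta_{2}\Delta(s)\theta_{2}^{\dag}$. The basepoint cases are automatic because $\nu\wedge 1$ and $1\wedge f$ preserve basepoints.

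The right-hand square follows from the identity $\eta\circ\nu=\pi_{D(d)}$, which is clear because $\nu(\theta,t)=\theta\Delta(t)\theta^{\dag}$ has eigenvalues exactly the coordinates of $t\in D'(d)$; precomposing each route around the right square with $\nu$ therefore yields $f\circ\pi_{D(d)}$ on the nose, and surjectivity of $\nu$ forces the equality. Uniqueness is the same remark applied to any candidate making the left square commute. For continuity of $\mathfrak{A}$, the exponential law in compactly generated weak Hausdorff spaces reduces the claim to continuity of $(f,\beta)\mapsto \mathfrak{A}_{f}(\beta)$; this is the composite of the continuous evaluation map (available because $D(d)$ is compact Hausdorff), the map $\nu\wedge 1_{X}$, and the descent through $1_{\FMap}\times\nu$, which remains a quotient after crossing with $\FMap(D(d),D(d)\wedge X)$ in the compactly generated setting. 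Finally, the explicit description is immediate: packaging an eigenbasis $v_{0},\ldots,v_{d-1}$ of $\alpha$ into the linear isometry $\theta(e_{i})=v_{i}$ exhibits $\alpha=\nu(\theta,(e_{0},\ldots,e_{d-1}))$, so $\mathfrak{A}_{f}(\alpha)=(\theta\Delta(s)\theta^{\dag})\wedge x$, and this endomorphism visibly has eigenvectors $v_{i}$ with eigenvalues $s_{i}$.

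The main obstacle is the well-definedness step: it is really a translation between two notions of \emph{facial}, namely the combinatorial one of preserving the stratification of $D(d)$ by equal-coordinate loci and the algebraic one of respecting eigenspace decompositions of self-adjoint operators. Once that bridge is built, every other assertion of the proposition is formal point-set manipulation or an application of the quotient property of $\nu$.
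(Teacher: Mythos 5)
Your proposal is correct and follows essentially the same route as the paper: descend $(\nu\wedge 1)\circ(1\wedge f)$ through the quotient $\nu$, verify well-definedness via the centralizer comparison between $\Delta(t)$ and $\Delta(s)$, get uniqueness from surjectivity and continuity from the quotient property, and prove continuity of $\mathfrak{A}$ by the same adjunction/evaluation argument. Your spelled-out version of the key step is in fact the correct orientation of the containment (the centralizer of $\Delta(t)$ sits inside that of $\Delta(s)$ because faciality coarsens the eigenvalue blocks), whereas the paper's one-line statement reads the inclusion the other way round.
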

\begin{proof} We first need to check that if $\nu(\alpha,t)=\nu(\alpha',t')$ then $\nu(\alpha,f(t))=\nu(\alpha',f(t'))$, but this follows from the fact that if $f(t)=s\wedge x$ then the centralizer of $\Delta(s)$ is contained within the centralizer of $\Delta(t)$. The described map $\mathfrak{A}_{f}$ clearly fits into the square and moreover it is unique because $\nu$ is surjective. The map $\mathfrak{A}_{f}\circ\nu=\nu\circ(1\wedge f)$ is continuous and so it follows that $\mathfrak{A}_{f}$ is continuous as $\nu$ is a quotient.

We have an adjunction 
\[
\Map(\FMap (D(d),D(d)\wedge X), \Map (s(V)_{\infty},s(V)_{\infty}\wedge X))\cong\Map(\FMap (D(d),D(d)\wedge X)\wedge s(V)_{\infty},s(V)_{\infty}\wedge X).
\]
Hence if we show that the adjoint $\mathfrak{A}^{\#}\co \FMap (D(d),D(d)\wedge X)\wedge s(V)_{\infty}\to s(V)_{\infty}\wedge X$ is continuous then continuity of $\mathfrak{A}$ follows. Let $\text{eval}$ be given by
\begin{align*}
\text{eval}\co \FMap (D(d),D(d)\wedge X)\wedge \mathcal{L}(\Complex^{d},V)_{\infty}\wedge D(d)&\to \mathcal{L}(\Complex^{d},V)_{\infty}\wedge D(d)\wedge X\\
(f,\alpha,t)&\mapsto (\alpha,f(t)).
\end{align*}
We have a commutative diagram
\[
\xymatrix{\FMap (D(d),D(d)\wedge X)\wedge \mathcal{L}(\Complex^{d},V)_{\infty}\wedge D(d)\ar[r]^{\phantom{xxxxxxxx}\text{eval}}\ar[d]_{1\wedge\nu}&\mathcal{L}(\Complex^{d},V)_{\infty}\wedge D(d)\wedge X\ar[d]^{\nu\wedge 1}\\
\FMap (D(d),D(d)\wedge X)\wedge s(V)_{\infty}\ar[r]_{\phantom{xxxxxxxx}\mathfrak{A}^{\#}}&s(V)_{\infty}\wedge X}
\]
and hence $\mathfrak{A}^{\#}\circ (1\wedge \nu)=(\nu\wedge 1)\circ\text{eval}$ is continuous. The map $(1\wedge \nu)$ is a quotient, thus $\mathfrak{A}^{\#}$ and $\mathfrak{A}$ are continuous.
\end{proof}

It is easy to see that the above holds for spaces of non-negative selfadjoint endomorphisms.

\begin{corollary}\label{fnvarApositive} The maps $\eta'$ and $\nu'$ restrict to
\[
\eta'\co s_{+}(V)\to D'_{+}(d)
\]
\[
\nu'\co \mathcal{L}(\Complex^{d},V)\times D'_{+}(d)\to s_{+}(V).
\]
Let $f\co D_{+}(d)\to D_{+}(d)\wedge X$ be facial. Then there exists a unique map $\mathfrak{A}_{f}$ holding the properties stated in Proposition~\ref{fnalvarA} and making
\[
\xymatrix@C=1.5cm{\mathcal{L}(\Complex^{d},V)_{\infty}\wedge D_{+}(d)\ar[d]_{1\wedge f}\ar[r]^{\phantom{xxxx}\nu}&s_{+}(V)_{\infty}\ar[d]^{\mathfrak{A}_{f}}\ar[r]^{\eta}&D_{+}(d)\ar[d]^{f}\\
\mathcal{L}(\Complex^{d},V)_{\infty}\wedge D_{+}(d)\wedge
X\ar[r]_{\phantom{xxxx}\nu\wedge1}& s_{+}(V)_{\infty}\wedge
X\ar[r]_{\eta\wedge 1}&D_{+}(d)\wedge X}
\]
commute.
\end{corollary}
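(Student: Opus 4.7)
The plan is to mirror the proof of Proposition~\ref{fnalvarA} line by line, with essentially the only new content being verification that each ingredient restricts cleanly to the non-negative setting. First I would check the restrictions of the structural maps. The map $\eta'$ sends $s_{+}(V)$ into $D'_{+}(d)$ because a self-adjoint endomorphism lies in $s_{+}(V)$ precisely when all its eigenvalues are non-negative, i.e.\ when $e_{0}(\alpha)\geqslant 0$. For $\nu'$, if $t\in D'_{+}(d)$ then $\Delta(t)$ is non-negative self-adjoint, hence $\alpha\Delta(t)\alpha^{\dag}\in s_{+}(V)$ because conjugation by any linear map preserves non-negativity of a self-adjoint endomorphism (one checks $\langle \alpha\Delta(t)\alpha^{\dag}v,v\rangle =\langle \Delta(t)\alpha^{\dag}v,\alpha^{\dag}v\rangle\geqslant 0$).

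Next I would verify that the restricted maps retain the properties needed for the original argument to go through. Continuity is inherited from Lemmas~\ref{EigenvalueMap} and \ref{DiagonalMatrixMap}. Properness is inherited because $s_{+}(V)$ and $D'_{+}(d)$ are closed subspaces of $s(V)$ and $D'(d)$ respectively, so the preimage of a compact set in the restricted target is the intersection of a closed set with a compact preimage in the unrestricted setting. Surjectivity of the restricted $\eta'$ and $\nu'$ is the spectral theorem for non-negative self-adjoint endomorphisms: every $\alpha\in s_{+}(V)$ admits an orthonormal basis of eigenvectors with eigenvalues in $[0,\infty)$, which produces the required preimage under both maps. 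Consequently $\eta=(\eta')_{\infty}$ and $\nu=(\nu')_{\infty}$ remain quotient maps after one-point compactifying the restricted domains and codomains.

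With these facts in hand, the construction of $\mathfrak{A}_{f}$ is essentially a transcription of the proof of Proposition~\ref{fnalvarA}. Given facial $f\co D_{+}(d)\to D_{+}(d)\wedge X$, I would define $\mathfrak{A}_{f}$ by the formula $\mathfrak{A}_{f}(\alpha)=f(\alpha)\wedge x$ in terms of an orthonormal eigenbasis of $\alpha$, exactly as in the unrestricted case. Well-definedness requires checking that if $\nu(\alpha,t)=\nu(\alpha',t')$ then $\nu(\alpha,f(t))=\nu(\alpha',f(t'))$; the facial hypothesis forces the multiplicity pattern of $f(t)$ to be coarser than that of $t$, so the centralizer of $\Delta(s)$ sits inside the centralizer of $\Delta(t)$, and the same computation works verbatim in the non-negative setting since we only ever conjugate by elements of $\mathcal{L}(\Complex^{d},V)$. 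Uniqueness and continuity of $\mathfrak{A}_{f}$ follow from $\nu$ being a (restricted) quotient, and continuity of $\mathfrak{A}$ follows from the identical adjunction-and-evaluation argument used in the original proposition.

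The only potential obstacle is ensuring that the one-point compactifications behave well under restriction, i.e.\ that $D_{+}(d)=(D'_{+}(d))_{\infty}$ and $s_{+}(V)_{\infty}$ inherit the subspace topology matching what Lemma~\ref{quotientandsubspacearethesame} gives for the proper closed inclusions $D'_{+}(d)\hookrightarrow D'(d)$ and $s_{+}(V)\hookrightarrow s(V)$. Once this identification is recorded, the corollary follows by direct appeal to the proof of Proposition~\ref{fnalvarA}; no new homotopical or analytical input is required.
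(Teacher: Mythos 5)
Your proposal is correct and matches the paper's (essentially omitted) argument: the paper simply remarks that Proposition~\ref{fnalvarA} ``holds for spaces of non-negative selfadjoint endomorphisms,'' and your write-up supplies exactly the routine restriction checks (non-negativity of eigenvalues, positivity preserved under conjugation, inherited properness/surjectivity/quotient properties) needed to transcribe that proof. No further comment is needed.
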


This result can be extended to build self-maps of $S^{\Hom(V,W)}$ for $V$ and $W$ Hermitian, $V$ of dimension $d$ and $W$ such that $\Dim(W)\geqslant d$. We again need two technical lemmas to set up the machinery, the proofs are easy to check.

\begin{lemma}\label{RhoIsProper} The map $\rho\co \Hom(V,W)\to s_{+}(V)$ is a proper surjection. Hence the based extension $\rho_{\infty}$ is a quotient map. Abusing notation we also denote this extension by $\rho$.
\end{lemma}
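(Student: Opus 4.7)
The plan is to verify surjectivity and properness as separate claims, and then invoke a standard fact about one-point compactifications of proper maps.

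For surjectivity, given $\alpha\in s_{+}(V)$, use that $\Dim(V)\leqslant \Dim(W)$ to fix any linear isometry $\iota\in\mathcal{L}(V,W)$ and set $\gamma:=\iota\circ\alpha$. Then $\gamma^{\dag}\gamma=\alpha^{\dag}\iota^{\dag}\iota\alpha=\alpha^{2}$, since $\iota^{\dag}\iota=1_{V}$ and $\alpha$ is self-adjoint. As $\alpha\in s_{+}(V)$ is already a non-negative self-adjoint endomorphism, the functional calculus gives $\rho(\gamma)=(\alpha^{2})^{1/2}=\alpha$.

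For properness, the key observation is that $\rho$ preserves the operator norm fibrewise: for every $v\in V$ we have
\[
\|\gamma v\|^{2}=\langle\gamma v,\gamma v\rangle=\langle\gamma^{\dag}\gamma v,v\rangle=\langle\rho(\gamma)^{2}v,v\rangle=\|\rho(\gamma)v\|^{2},
\]
so $\|\gamma\|=\|\rho(\gamma)\|$. Now $\Hom(V,W)$ and $s_{+}(V)$ are finite-dimensional normed vector spaces, in which the compact subsets are exactly the closed bounded ones. If $K\subseteq s_{+}(V)$ is compact, then $\rho^{-1}(K)$ is closed by continuity of $\rho$ and bounded by the norm identity above, hence compact. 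This is precisely the kind of straightforward properness argument flagged in the conventions section.

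For the final clause, $\rho$ is a continuous proper surjection between locally compact Hausdorff spaces, so it admits a continuous based extension $\rho_{\infty}\co \Hom(V,W)_{\infty}\to s_{+}(V)_{\infty}$ sending the point at infinity to the point at infinity; continuity at infinity is equivalent to properness. This extension is a surjection between compact Hausdorff spaces and is therefore automatically a quotient map. The only genuinely non-formal input is the norm identity $\|\gamma\|=\|\rho(\gamma)\|$, and once that is in hand there is no real obstacle; the rest is a standard application of Lemma~\ref{quotientandsubspacearethesame}-style point-set reasoning.
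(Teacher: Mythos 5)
Your proof is correct and is essentially the argument the paper intends but omits: the norm identity $\|\gamma\|=\|\rho(\gamma)\|$ is exactly the kind of bound the conventions section says these properness checks reduce to, and the surjectivity and compactification steps are the standard ones. Nothing further is needed.
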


\begin{lemma}\label{TheMapMu} Define
\begin{align*}
\mu'\co s_{+}(V)\times\mathcal{L}(V,W)&\to \Hom(V,W)\\
(\alpha,\theta)&\mapsto -\theta\circ\alpha
\end{align*}
Then $\mu'$ is a continuous proper surjection and hence $\mu:=(\mu')_{\infty}$ is a quotient map.
\end{lemma}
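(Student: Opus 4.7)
The plan is to verify, in order, that $\mu'$ is continuous, surjective, and proper, and then to extract the quotient statement from Lemma~\ref{quotientandsubspacearethesame}. Continuity is immediate since $(\alpha,\theta)\mapsto-\theta\circ\alpha$ factors as a bilinear composition followed by negation on finite-dimensional normed spaces.

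For surjectivity I would invoke the polar decomposition machinery already set up. Given $\gamma\in\Hom(V,W)$, I set $\alpha:=\rho(-\gamma)=(\gamma^{\dag}\gamma)^{1/2}\in s_{+}(V)$, so that by Lemma~\ref{themapsigma} the map $\sigma(-\gamma)\co(\Ker\gamma)^{\bot}\to W$ is a linear isometry satisfying $-\gamma=\sigma(-\gamma)\circ\alpha$. The hypothesis $\dim V\leqslant \dim W$ leaves room in the orthogonal complement of $\sigma(-\gamma)((\Ker\gamma)^{\bot})$ inside $W$ to isometrically embed $\Ker\gamma$, and combining such an embedding with $\sigma(-\gamma)$ produces a linear isometry $\theta\co V\to W$. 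Then $\theta\circ\alpha=\sigma(-\gamma)\circ\alpha=-\gamma$ holds on $(\Ker\gamma)^{\bot}$ by polar decomposition and trivially on $\Ker\gamma$ because $\Ker\alpha=\Ker\gamma$ (as $\alpha$ is self-adjoint with $\alpha^{2}=\gamma^{\dag}\gamma$), so $\mu'(\alpha,\theta)=\gamma$.

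Properness is the step requiring the most attention, though as the paper flags in its conventions section it reduces to a bound argument. I would show directly that the preimage of a bounded set is bounded. Given $(\alpha_{n},\theta_{n})$ with $\{-\theta_{n}\circ\alpha_{n}\}$ bounded in $\Hom(V,W)$, I first extract a subsequence along which $\theta_{n_{k}}$ converges, using that $\mathcal{L}(V,W)$ is a compact Stiefel manifold. Since each $\theta_{n_{k}}$ is an isometry, $\|\alpha_{n_{k}}\|_{\mathrm{op}}=\|\theta_{n_{k}}\circ\alpha_{n_{k}}\|_{\mathrm{op}}$, so $\{\alpha_{n_{k}}\}$ is bounded in the finite-dimensional space $\End(V)$ and admits a further convergent subsequence; its limit automatically lies in $s_{+}(V)$, which is closed in $\End(V)$ since self-adjointness passes to limits and eigenvalues depend continuously on the endomorphism.

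With $\mu'$ established as a continuous proper surjection of locally compact Hausdorff spaces, the quotient claim follows by applying Lemma~\ref{quotientandsubspacearethesame} with $X=s_{+}(V)\times\mathcal{L}(V,W)$ and $Y=Z=\Hom(V,W)$. I do not anticipate any serious obstacle here; the only real technical moments are the extension of $\sigma(-\gamma)$ to a full isometry in the surjectivity step and the bound chasing in the properness step, while the seemingly superfluous minus sign is absorbed into the paper's general sign convention.
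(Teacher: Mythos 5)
The paper leaves this lemma to the reader, and your argument is exactly the one it intends: continuity from bilinearity, surjectivity from the polar decomposition of Lemmas~\ref{themaprho} and~\ref{themapsigma} (using $\dim V\leqslant\dim W$ to extend $\sigma(-\gamma)$ to a full isometry), properness via the bound $\|\alpha\|_{\mathrm{op}}=\|\theta\circ\alpha\|_{\mathrm{op}}$ together with compactness of $\mathcal{L}(V,W)$ and closedness of $s_{+}(V)$, and the quotient statement from Lemma~\ref{quotientandsubspacearethesame}. This is correct and complete.
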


\begin{proposition}\label{fnalvarB} Let $X$ be a based space and let $f\co D_{+}(d)\to D_{+}(d)\wedge X$ be facial. Then there exists a unique map $\mathfrak{B}_{f}\co S^{\Hom(V,W)}\to  S^{\Hom(V,W)}\wedge X$ making
\[
\xymatrix@C=1.5cm{\mathcal{L}(V,W)_{\infty}\wedge s_{+}(V)_{\infty}\ar[d]_{1\times \mathfrak{A}_{f}}\ar[r]^{\phantom{xxxx}\mu}&S^{\Hom(V,W)}\ar[d]^{\mathfrak{B}_{f}}\ar[r]^{\rho}&s_{+}(V)_{\infty}\ar[d]^{\mathfrak{A}_{f}}\\
\mathcal{L}(V,W)_{\infty}\wedge s_{+}(V)_{\infty}\wedge
X\ar[r]_{\phantom{xxxx}\mu\wedge1}& S^{\Hom(V,W)}\wedge
X\ar[r]_{\rho\wedge 1}&s_{+}(V)_{\infty}\wedge X}
\]
commute; moreover, the associated map 
\begin{align*}
\mathfrak{B}\co F(\mathfrak{A})&\to \Map(S^{\Hom(V,W)},S^{\Hom(V,W)}\wedge X)\\
f&\mapsto \mathfrak{B}_{f}
\end{align*}
is continuous. Here $F(\mathfrak{A})$ is the space of all maps $s_{+}(V)_{\infty}\to s_{+}(V)_{\infty}\wedge X$ of the form $\mathfrak{A}_{f}$. Furthermore, we have an explicit description of $\mathfrak{B}_{f}(\gamma)$. Choose an orthonormal basis of eigenvectors $v_{0},\ldots,v_{d-1}$ of $\gamma^{\dag}\gamma$ with eigenvalues $e_{0}^{2}\leqslant\ldots\leqslant e_{d-1}^{2}$ such that $\gamma(v_{i})=e_{i}m_{i}$ for some $m_{i}$ orthonormal in $W$. Then if $f(e_{0},\ldots,e_{d-1})=(s_{0},\ldots,s_{d-1})\wedge x$ we have $\mathfrak{B}_{f}(\gamma)=f(\gamma)\wedge x$, where $f(\gamma)$ is the homomorphism sending each $v_{i}$ to $s_{i}m_{i}$.
\end{proposition}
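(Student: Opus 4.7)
The plan is to mirror the proof of Proposition~\ref{fnalvarA} closely, using the quotient $\mu$ of Lemma~\ref{TheMapMu} in place of $\nu$ and the quotient $\rho$ of Lemma~\ref{RhoIsProper}. I would first define $\mathfrak{B}_{f}$ to be the unique map making the left square commute, by showing that the composite $(\mu\wedge 1)\circ(1\wedge\mathfrak{A}_{f})$ factors through the quotient $\mu$. The essential thing to verify is that whenever $\mu'(\theta,\alpha)=\mu'(\theta',\alpha')$, equivalently $\theta\circ\alpha=\theta'\circ\alpha'$ in $\Hom(V,W)$, one has $\mu(\theta,\mathfrak{A}_{f}(\alpha))=\mu(\theta',\mathfrak{A}_{f}(\alpha'))$ in $S^{\Hom(V,W)}\wedge X$.

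Verifying this compatibility is what I expect to be the main obstacle; here is how I would argue. The equation $\theta\alpha=\theta'\alpha'$ with $\alpha,\alpha'\in s_{+}(V)$ forces $\alpha=\alpha'=\rho(\theta\alpha)$ by the uniqueness of the polar decomposition underlying Lemmas~\ref{themaprho} and~\ref{themapsigma}, and then $\theta$ and $\theta'$ must agree on $\IM(\alpha)=(\Ker(\alpha))^{\bot}$. So it suffices to show $\Ker(\alpha)\subseteq\Ker(\mathfrak{A}_{f}(\alpha))$, equivalently $\IM(\mathfrak{A}_{f}(\alpha))\subseteq\IM(\alpha)$. By the explicit description of $\mathfrak{A}_{f}$, if $\alpha$ has eigenvalues $e_{0}\leqslant\ldots\leqslant e_{d-1}$ and $f(e_{0},\ldots,e_{d-1})=(s_{0},\ldots,s_{d-1})\wedge x$, then whenever $e_{0}=\ldots=e_{k}=0$ the input lies in $D_{0}(d)\cap F_{0}(D_{+}(d))\cap\ldots\cap F_{k-1}(D_{+}(d))$; since $f$ is facial it preserves this intersection, so $s_{0}=\ldots=s_{k}=0$, giving the required kernel containment.

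Granted this compatibility, uniqueness of $\mathfrak{B}_{f}$ follows from surjectivity of $\mu$ and continuity follows from $\mu$ being a quotient. To produce the explicit formula I would unwind the definitions on a representative $\gamma=-\theta\alpha$ with $\alpha=\rho(\gamma)$, so that the eigenvectors $v_{i}$ of $\gamma^{\dag}\gamma$ diagonalize $\alpha$ with eigenvalues $e_{i}$, and $\mathfrak{B}_{f}(\gamma)$ sends $v_{i}$ to $s_{i}m_{i}$ (smashed with $x$). Commutativity of the right square then reduces to the identity $\rho\circ\mathfrak{B}_{f}=\mathfrak{A}_{f}\circ\rho$, which is immediate from this description since both sides act on the $v_{i}$ by the eigenvalues $s_{i}$. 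Finally, I would establish continuity of $\mathfrak{B}$ by the same adjunction argument as in Proposition~\ref{fnalvarA}: the evaluation map $F(\mathfrak{A})\wedge\mathcal{L}(V,W)_{\infty}\wedge s_{+}(V)_{\infty}\to\mathcal{L}(V,W)_{\infty}\wedge s_{+}(V)_{\infty}\wedge X$ sending $(\mathfrak{A}_{f},\theta,\alpha)$ to $(\theta,\mathfrak{A}_{f}(\alpha))$ is continuous (using continuity of $\mathfrak{A}$), and composition with $\mu\wedge 1$ descends through the quotient $1\wedge\mu$ to give continuity of the adjoint $\mathfrak{B}^{\#}$, and hence of $\mathfrak{B}$.
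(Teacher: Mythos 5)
Your proposal is correct and follows essentially the same route as the paper: factor $(\mu\wedge 1)\circ(1\wedge\mathfrak{A}_{f})$ through the quotient $\mu$ using the key fact $\Ker(\alpha)\subseteq\Ker(\mathfrak{A}_{f}(\alpha))$, get uniqueness from surjectivity and continuity from the quotient property, and prove continuity of $\mathfrak{B}$ by the same adjunction/evaluation argument. You in fact supply more detail than the paper, which merely asserts the kernel containment where you derive it from faciality via $D_{0}(d)\cap F_{0}\cap\ldots\cap F_{k-1}$.
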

\begin{proof} Let $\mu(\theta,\alpha)=\mu(\theta',\alpha')$, that $\mu(\theta,\mathfrak{A}_{f}(\alpha))=\mu(\theta',\mathfrak{A}_{f}(\alpha'))$ follows from the fact that $\Ker(\alpha)\subseteq\Ker(\mathfrak{A}_{f}(\alpha))$. The described map $\mathfrak{B}_{f}$ makes the diagram commute and moreover this map is unique as $\mu$ is surjective. As in the proof of \ref{fnalvarA} $\mathfrak{B}_{f}$ is continuous as $\mu$ is a quotient map.

We again rely on an adjunction argument to show continuity of $\mathfrak{B}$; we show that the adjoint $\mathfrak{B}^{\#}\co F(\mathfrak{A})\wedge S^{\Hom(V,W)}\to S^{\Hom(V,W)}\wedge X$ is continuous. Let $\text{eval}$ be defined by
\begin{align*}
\text{eval}\co F(\mathfrak{A})\wedge\mathcal{L}(V,W)_{\infty}\wedge s_{+}(V)_{\infty}&\to\mathcal{L}(V,W)_{\infty}\wedge s_{+}(V)_{\infty}\wedge X\\
(\mathfrak{A}_{f},\theta,\alpha)&\mapsto (\theta,\mathfrak{A}_{f}(\alpha)).
\end{align*}
We have a commutative diagram
\[
\xymatrix{F(\mathfrak{A})\wedge\mathcal{L}(V,W)_{\infty}\wedge s_{+}(V)_{\infty}\ar[r]^{\text{eval}}\ar[d]_{1\wedge\mu}&\mathcal{L}(V,W)_{\infty}\wedge s_{+}(V)_{\infty}\wedge X\ar[d]^{\mu\wedge 1}\\
F(\mathfrak{A})\wedge S^{\Hom(V,W)}\ar[r]_{\mathfrak{B}^{\#}}&S^{\Hom(V,W)}\wedge X}
\]
and hence $\mathfrak{B}^{\#}\circ (1\wedge\mu)$ is continuous. The map $(1\wedge\mu)$ is a quotient, thus $\mathfrak{B}^{\#}$ and $\mathfrak{B}$ are continuous.
\end{proof}

\subsection{Building a cofibre sequence using the functional calculus}\label{Building a cofibre sequence using the functional calculus}

We now give a concrete example of this functional calculus by building an NDR (Neighbourhood Deformation Retract) pair, which we use throughout the rest of the document. We take our definition of NDR as follows.

\begin{definition}\label{NDR} Let $X$ be a space and $A$ a closed subspace. We say that a pair of continuous maps $(u\co X\to [0,1],h\co [0,1]\times X\to X)$ represents $(X,A)$ as an NDR pair if:
\begin{enumerate}
\item $h_{1}(x)=x$ for all $x\in X$.
\item $h_{t}(a)=a$ for all $t\in[0,1]$ and $a\in A$.
\item $h_{0}(x)\in A$ for all $x\in X$ such that $u(x)<1$.
\item $u^{-1}(0)=A$.
\end{enumerate}
\end{definition}

The next three lemmas have routine proofs.

\begin{lemma}\label{unitdiscNDR} Let $X$ be the upper half disc $\{z\in\Complex:|z|\leqslant 1, \IM(z)\geqslant 0\}$ and let $Y$ be  the upper semicircle $\{z\in X:|z|=1\}$ with basepoint $z=-1$:

\begin{center}
\begin{tikzpicture}
\filldraw[color=gray] (1,0) arc (0:180:1) -- cycle;
\draw[<->](-2,0) -- (2,0);
\draw[<->](0,-2) -- (0,2);
\draw[line width=2pt](1,0) arc (0:180:1);
\filldraw[black](-1,0) circle (0.5mm);
\draw(-1.5,-1) node {the basepoint};
\draw[->](-1.5,-0.8) -- (-1,-0.1);
\end{tikzpicture}
\end{center}

Then
\begin{align*}
u''(re^{i\theta})&:=\min(1,2-2r)\\
h_{t}''(re^{i\theta})&:=\min(1,(2-t)r)e^{i\theta}.
\end{align*}
make $(X,Y)$ into an NDR pair.
\end{lemma}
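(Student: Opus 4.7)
The plan is to simply verify the four conditions of Definition~\ref{NDR} directly from the explicit formulas, so there is no real difficulty here; the only points to be careful about are continuity at the origin (where the $e^{i\theta}$ factor is ambiguous) and the inequality needed for condition~(3).

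First I would observe that $u''$ and $h_t''$ are continuous. For $u''$ this is immediate since it depends only on $r = |z|$. For $h_t''$, at any point with $r > 0$ the map $re^{i\theta} \mapsto \min(1,(2-t)r)e^{i\theta}$ is continuous, and at $r = 0$ we have $h_t''(0) = 0$ with $\|h_t''(re^{i\theta})\| = \min(1,(2-t)r) \to 0$ as $r \to 0$, so $h_t''$ extends continuously across the origin. We also need to check $h_t''(X) \subseteq X$: the modulus of $h_t''(re^{i\theta})$ is at most $1$ by construction, and the argument $\theta$ lies in $[0,\pi]$ since $re^{i\theta} \in X$, so the image stays in the upper half disc.

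Next I would run through the four NDR conditions in turn. For (1), $h_1''(re^{i\theta}) = \min(1,r)e^{i\theta} = re^{i\theta}$ since $r \leqslant 1$. For (2), if $a = e^{i\theta} \in Y$ then $r = 1$, so $(2-t)r = 2-t \geqslant 1$ for $t \in [0,1]$, giving $h_t''(a) = e^{i\theta} = a$. For (4), $u''(re^{i\theta}) = 0$ forces $2-2r = 0$, i.e.\ $r=1$, so $(u'')^{-1}(0) = Y$.

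The one condition that needs a small inequality is (3). If $u''(re^{i\theta}) < 1$ then $\min(1,2-2r) < 1$, which forces $2-2r < 1$, i.e.\ $r > 1/2$; hence $2r > 1$ and so $h_0''(re^{i\theta}) = \min(1,2r)e^{i\theta} = e^{i\theta} \in Y$, as required. This is the crux of the argument, though calling it an obstacle would be generous. With these four checks in hand, $(u'',h'')$ represents $(X,Y)$ as an NDR pair.
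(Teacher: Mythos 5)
Your verification is correct and is exactly the routine direct check of the four NDR conditions that the paper has in mind (it omits the proof, noting only that it is routine). The key inequality in condition (3) — that $u''<1$ forces $r>1/2$ and hence $\min(1,2r)=1$ — is handled properly, as is continuity of $h''_t$ at the origin.
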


\begin{lemma}\label{theconformalmapphi} There is a relative homeomorphism $\phi\co(D_{+}(2),D_{0}(2))\cong(X,Y)$ given by:
\begin{align*}
\phi\co D_{+}(2)&\to X\\
(t_{0},t_{1})&\mapsto\frac{i-(t_{1}+it_{0})^{2}}{i+(t_{1}+it_{0})^{2}}
\end{align*}

\begin{center}
\begin{tikzpicture}
\filldraw[color=gray] (0,0) -- (2,2) -- (0,2) -- cycle;
\draw[<->](-2,0) -- (2,0);
\draw[<->](0,-2) -- (0,2);
\draw[line width=2pt](0,0) -- (0,2);
\draw(0,0) -- (2,2);
\draw(1.25,0.5) node {$\cup\{\infty\}$};
\draw(2.5,0) node{$\cong{^\phi}$};
\filldraw[color=gray] (6,0) arc (0:180:1) -- cycle;
\draw[<->](3,0) -- (7,0);
\draw[<->](5,-2) -- (5,2);
\draw[line width=2pt](6,0) arc (0:180:1);
\filldraw[black](4,0) circle (0.5mm);
\end{tikzpicture}
\end{center}
\end{lemma}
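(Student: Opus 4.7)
The plan is to factor the unbased restriction $\phi'\co D'_+(2) \to X$ as a composition of three manifestly continuous maps and then extend to one-point compactifications. First, the real-linear map $(t_0, t_1) \mapsto w := t_1 + it_0$ identifies $D'_+(2) = \{(t_0,t_1) : 0 \leqslant t_0 \leqslant t_1\}$ homeomorphically with the closed sector $S := \{w \in \Complex : 0 \leqslant \arg(w) \leqslant \pi/4\}$, sending the edge $D'_0(2)$ to the non-negative real axis. Second, $w \mapsto w^2$ is a homeomorphism from $S$ onto the closed first quadrant $Q$: it doubles arguments, and the principal square root on $Q$ provides a continuous two-sided inverse. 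Third, $M(z) := (i-z)/(i+z)$ is a M\"obius transformation whose pole $-i$ lies outside $Q$, so it restricts to a continuous injection $Q \cup \{\infty\} \to \hat{\Complex}$.

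The key step is to identify the image of this last restriction as $X$. A short computation gives $M(0) = 1$, $M(1) = i$, $M(\infty) = -1$, $M(it) = (1-t)/(1+t)$ for $t \geqslant 0$, and $|M(t)|^2 = 1$ for all $t \in \Real$. Hence $M$ maps the non-negative real axis together with $\infty$ homeomorphically onto the upper semicircle $Y$ (traversed from $1$ through $i$ to $-1$), and maps the non-negative imaginary axis onto the real segment $[-1, 1]$. Thus the boundary of $Q \cup \{\infty\}$ maps bijectively onto $\partial X$. Since $M$ is a global homeomorphism of $\hat{\Complex}$, the image of $Q \cup \{\infty\}$ is one of the two closed Jordan regions in $\hat{\Complex}$ bounded by $\partial X$; a single interior computation such as $M(1+i) = (-1+2i)/5 \in X^\circ$ confirms it is $X$ itself.

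Composing the three steps yields a continuous bijection $\phi\co D_+(2) \to X$, which is a homeomorphism since $D_+(2)$ is compact and $X$ is Hausdorff. The relative statement $\phi(D_0(2)) = Y$ follows immediately from tracking $D'_0(2)$ through the factorization, together with the fact that the added point $\infty \in D_+(2)$ maps to $M(\infty) = -1$, the common basepoint. The only minor subtlety is checking properness of $\phi'$ so that the extension to compactifications is continuous; this is immediate because $|w|^2 \to \infty$ in $Q$ forces $M(w^2) \to -1$. No genuine obstacle arises: the whole argument is a careful bookkeeping of an elementary conformal identification.
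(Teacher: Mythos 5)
Your argument is correct: the factorization through $w=t_1+it_0$, $w\mapsto w^2$, and the M\"obius map $M(z)=(i-z)/(i+z)$ is a complete and accurate verification, and the boundary computations ($M(0)=1$, $M(1)=i$, $M(\infty)=-1$, $M(it)=(1-t)/(1+t)$, $|M(t)|=1$) together with the interior check $M(1+i)=(-1+2i)/5$ and the compact-to-Hausdorff argument establish the homeomorphism of pairs. The paper omits this proof entirely as ``routine,'' and your write-up is exactly the elementary conformal bookkeeping that was intended.
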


\begin{lemma}\label{D2NDRpair} $(D_{+}(2),D_{0}(2))$ is an NDR pair via
\begin{align*}
u'(t_{0},t_{1})&:=u''\circ\phi(t_{0},t_{1})\\
h_{t}'(t_{0},t_{1})&:=\phi^{-1}\circ h_{t}''\circ \phi(t_{0},t_{1}).
\end{align*}
\end{lemma}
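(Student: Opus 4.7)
The plan is to transport the NDR structure on $(X, Y)$ established in Lemma~\ref{unitdiscNDR} across the homeomorphism $\phi$ of Lemma~\ref{theconformalmapphi}. Since $\phi$ is a homeomorphism of pairs sending $D_0(2)$ onto $Y$, the formulas $u' = u'' \circ \phi$ and $h'_t(x) = \phi^{-1}(h''_t(\phi(x)))$ are well-defined continuous functions; continuity is immediate because $\phi$ and $\phi^{-1}$ are both continuous on their respective domains, and $h''_t$ preserves $X$.

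I would then verify the four conditions of Definition~\ref{NDR} in turn, each of which is a direct pullback of the corresponding condition for $(u'', h'')$. Condition~(1) follows because $h'_1 = \phi^{-1} \circ h''_1 \circ \phi = \phi^{-1} \circ \phi = 1$. Condition~(2) follows because for $a \in D_0(2)$ one has $\phi(a) \in Y$, and so $h''_t(\phi(a)) = \phi(a)$, whence $h'_t(a) = a$. For condition~(3), if $u'(x) < 1$ then $u''(\phi(x)) < 1$, so $h''_0(\phi(x)) \in Y$; applying $\phi^{-1}$ returns this point to $D_0(2)$. Finally, condition~(4) is the chain of equivalences $u'(x) = 0 \iff u''(\phi(x)) = 0 \iff \phi(x) \in Y \iff x \in D_0(2)$.

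The only genuine technical ingredient is Lemma~\ref{theconformalmapphi} itself, namely that the displayed formula defines a homeomorphism of pairs $(D_+(2), D_0(2)) \cong (X, Y)$. This in turn reduces to observing that $(t_0, t_1) \mapsto (t_1 + it_0)^2$ identifies $D'_+(2)$ with the closed upper half-plane, and that the Cayley-type map $w \mapsto (i-w)/(i+w)$ then provides a homeomorphism of the one-point compactification of the closed upper half-plane with the upper half-disc $X$ sending $\infty \mapsto -1$. Once that is in hand, the proof is purely formal, and I do not anticipate any real obstacle, which is no doubt why the paper labels the proof as routine.
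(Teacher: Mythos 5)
Your proof is correct and is exactly the paper's intended (omitted, ``routine'') argument: the NDR data for $(D_{+}(2),D_{0}(2))$ is the pullback along the homeomorphism of pairs $\phi$ of the data for $(X,Y)$ from Lemma~\ref{unitdiscNDR}, and the four conditions of Definition~\ref{NDR} transfer by conjugation precisely as you verify. One small slip in your aside on Lemma~\ref{theconformalmapphi}: since $0\leqslant t_{0}\leqslant t_{1}$, the map $(t_{0},t_{1})\mapsto (t_{1}+it_{0})^{2}$ identifies $D'_{+}(2)$ with the closed \emph{first quadrant} rather than the closed upper half-plane (the compactified half-plane would be carried by $w\mapsto(i-w)/(i+w)$ onto the whole disc, not the half-disc $X$); this does not affect the present lemma, which takes Lemma~\ref{theconformalmapphi} as given.
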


We want to build a new NDR pair out of the pair of Lemma~\ref{D2NDRpair}. To do this we need one more construction. Let $f\co D_{+}(2)\to D_{+}(2)$ be facial. Then $f$ can be written in the form $f(t_{0},t_{1})=(g(t_{0},t_{1}),g(t_{0},t_{1})+h(t_{0},t_{1}))$ for functions $g$, $h\co D_{+}(2)\to \Real^{+}$ such that $h(t,t)=0$.

\begin{proposition}\label{TheHatConstruction} Define $\hat{f}\co D(d)\to D(d)$ by
\[
\hat{f}(t_{0},\ldots,t_{d-1})_{i}=\left\{\begin{array}{ll}g(t_{0},t_{d-1})+\frac{t_{i}-t_{0}}{t_{d-1}-t_{0}}h(t_{0},t_{d-1})&\text{if }t_{0}<t_{d-1}\\
g(t_{0},t_{d-1})&\text{if }t_{0}=t_{d-1}\end{array}\right.
\]
\[
\hat{f}(\infty)=\infty.
\]
Then $\hat{f}$ is a continuous facial map and the map 
\begin{align*}
\operatorname{hat}\co F_{+}(2)&\to F_{+}(d)\\
f&\mapsto \hat{f}
\end{align*}
is continuous.
\end{proposition}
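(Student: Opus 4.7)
The plan is to verify in order: well-definedness of $\hat{f}$ as a map $D(d)\to D(d)$, its faciality, its continuity, and then continuity of the assignment $f\mapsto \hat{f}$. The substantive content is all concentrated at the locus $\{t_0=t_{d-1}\}$ where the defining formula switches branches; everything else is a routine check.

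I would start by writing the formula uniformly as $\hat{f}(t)_i = g(t_0,t_{d-1}) + \lambda_i(t)\cdot h(t_0,t_{d-1})$, with $\lambda_i(t) := (t_i-t_0)/(t_{d-1}-t_0)\in [0,1]$ whenever $t_0<t_{d-1}$. The faciality hypothesis on $f$ (it preserves the diagonal face of $D_+(2)$) forces $h(s,s)=0$, so the $h$-term vanishes on $\{t_0=t_{d-1}\}$ regardless of how we interpret $\lambda_i$ there, reconciling with the formula given. Since $\lambda_i$ is non-decreasing in $i$ and $h\geq 0$, the coordinates $\hat{f}(t)_i$ are non-decreasing in $i$, so $\hat{f}(t)\in D(d)$. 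For faciality of $\hat{f}$, if $t_i=t_{i+1}$ then either all the $t_j$ coincide (so we are on the diagonal and $\hat{f}(t)$ is constant) or $t_0<t_{d-1}$ and $\lambda_i(t)=\lambda_{i+1}(t)$; either way $\hat{f}(t)_i=\hat{f}(t)_{i+1}$.

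For continuity at finite $t$ with $f(t_0,t_{d-1})\in D_+'(2)$: away from $\{t_0=t_{d-1}\}$ the formula is a continuous composition. At a degenerate point $s=(s_0,\ldots,s_0)$, use that $\lambda_i(t)\in[0,1]$ is bounded while $h(t_0,t_{d-1})\to h(s_0,s_0)=0$, so the $\lambda_i h$ term vanishes in the limit and $\hat{f}(t)\to (g(s_0,s_0),\ldots,g(s_0,s_0))=\hat{f}(s)$ by continuity of $g$. For continuity at $\infty\in D(d)$ and at finite $t$ with $f(t_0,t_{d-1})=\infty$, I would argue via a properness check: the map $D(d)\to D_+(2)$ sending $t\mapsto f(t_0,t_{d-1})$ is continuous, so the preimage of any compact subset of $D_+'(2)$ under this map is closed in $D(d)$, and on this preimage $\hat{f}$ agrees with the explicit formula, which takes bounded sets to bounded sets (values are trapped between $g(t_0,t_{d-1})$ and $g(t_0,t_{d-1})+h(t_0,t_{d-1})$). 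This lets $\hat{f}$ extend continuously to the one-point compactification sending $\infty$ to $\infty$.

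For the continuity of $\operatorname{hat}\co F_+(2)\to F_+(d)$, pass to the adjoint $F_+(2)\wedge D(d)\to D(d)$, $(f,t)\mapsto \hat{f}(t)$. The functions $g,h$ depend continuously on $f$ in the compact-open topology, so the formula $g(t_0,t_{d-1})+\lambda_i(t)h(t_0,t_{d-1})$ is jointly continuous in $(f,t)$ on $\{t_0<t_{d-1}\}$; the same sandwich argument as above, using that $h_{f_n}(t_0^{(n)},t_{d-1}^{(n)})\to 0$ when $t_0^{(n)}-t_{d-1}^{(n)}\to 0$ uniformly in $n$ on compact sets of $F_+(2)$, handles the degenerate locus and the point at infinity jointly. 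The main obstacle throughout is this single issue of controlling the branch change at $\{t_0=t_{d-1}\}$; once the vanishing of $h$ on the diagonal is combined with the bound $\lambda_i\in[0,1]$, all four claims follow from the same sandwich estimate.
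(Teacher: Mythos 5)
Your proof is correct, but you take a genuinely different route from the paper on the two continuity claims, which are the only non-routine parts. You establish joint continuity of $(f,t)\mapsto\hat{f}(t)$ by direct case analysis: composition of continuous maps on the open locus $\{t_{0}<t_{d-1}\}$, and a sandwich estimate $g\leqslant\hat{f}(t)_{i}\leqslant g+h$ (using $h(s,s)=0$ and $\lambda_{i}\in[0,1]$) at the degenerate locus and at infinity. The paper dismisses continuity of $\hat{f}$ itself as "a limit argument" (essentially your sandwich, which you at least supply), and for continuity of $\operatorname{hat}$ it forms the same adjoint $\operatorname{hat}^{\#}\co F_{+}(2)\wedge D_{+}(d)\to D_{+}(d)$ that you do, but then precomposes with the proper quotient $\lambda\co D_{+}(2)\wedge(\Delta_{d-2})_{\infty}\to D_{+}(d)$, $(t_{0},t_{1},s)\mapsto(t_{0},t_{0}+s_{0}(t_{1}-t_{0}),\ldots,t_{1})$. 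Upstairs the composite is $\lambda\circ\mathrm{eval}$, sending $(f,t_{0},t_{1},s)$ to $(g,g+s_{0}h,\ldots,g+h)$, which is globally continuous with no branch change at all---the degenerate locus is absorbed into the fibres of the quotient---so continuity of $\operatorname{hat}^{\#}$ falls out of $1\wedge\lambda$ being a quotient map. The paper's route is uniform with its other functional-calculus arguments (\ref{fnalvarA}, \ref{fnalvarB}, \ref{fnalvarC}) and needs no estimates; yours is more elementary and self-contained. One small caution on your properness step: what is needed is that bounded sets in the \emph{target} pull back to bounded sets, which follows from $\hat{f}(t)_{d-1}=f(t_{0},t_{d-1})_{1}$ together with properness of $f$ away from the basepoint; your sentence about the formula "taking bounded sets to bounded sets" points in the wrong direction, although the sandwich you have already set up yields the correct implication immediately.
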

\begin{proof} Most of the claims are easy to show, though continuity of $\hat{f}$ requires a limit argument. The only real issue is checking that the map $\operatorname{hat}$ is continuous, which relies on another adjunction argument similar to those used in \ref{fnalvarA} and \ref{fnalvarB}. Recall that $F_{+}(d)$ is the space of facial self-maps of $D_{+}(d)$. We have an adjunction
\[
\Map(F_{+}(2),\Map(D_{+}(d),D_{+}(d))) \cong \Map(F_{+}(2)\wedge D_{+}(d),D_{+}(d)).
\]
Observe that $F_{+}(d)\subset\Map(D_{+}(d),D_{+}(d))$, thus continuity of $\operatorname{hat}$ follows from the continuity of the adjoint $\operatorname{hat}^{\#}$. Let $\Delta_{d-2}$ be the standard $(d-2)$--simplex which we take to be parameterized by $d-3$ increasing coordinates in $[0,1]$. Define
\begin{align*}
\lambda'\co D_{+}'(2)\times \Delta_{d-2}&\to D_{+}'(d)\\
(t_{0},t_{1},s_{0},\ldots,s_{d-3})&\mapsto (t_{0},t_{0}+s_{0}(t_{1}-t_{0}),\ldots,t_{0}+s_{d-3}(t_{1}-t_{0}),t_{1}).
\end{align*}
The map $\lambda'$ is a proper surjection, hence $\lambda:=(\lambda')_{\infty}$ is a quotient. Let $\text{eval}$ be the map
\begin{align*}
\text{eval}\co F_{+}(2)\wedge D_{+}(2)\wedge (\Delta_{d-2})_{\infty}&\to D_{+}(2)\wedge (\Delta_{d-2})_{\infty}\\
(f,t,s)&\mapsto (f(t),s).
\end{align*}
We have a commutative diagram
\[
\xymatrix{F_{+}(2)\wedge D_{+}(2)\wedge (\Delta_{d-2})_{\infty}\ar[r]^{\phantom{xx}\text{eval}}\ar[d]_{(1\wedge \lambda)}&D_{+}(2)\wedge (\Delta_{d-2})_{\infty}\ar[d]^{\lambda}\\
F_{+}(2)\wedge D_{+}(d)\ar[r]_{{\phantom{xx}\operatorname{hat}^{\#}}}&D_{+}(d)}
\]
and hence $\text{hat}^{\#}\circ (1\wedge \lambda)$ is continuous; the continuity of $\text{hat}$ follows.
\end{proof}

This construction is used to build the below NDR pair, the proof is simple to check.

\begin{proposition}\label{homNDRpair} $(S^{\Hom(V,W)},\inj(V,W)^{c}_{\infty})$ is an NDR pair via
\begin{align*}
u(\gamma)&:=u'(e_{0}(\rho(\gamma)),e_{d-1}(\rho(\gamma)))\\
h_{t}(\gamma)&:=\mathfrak{B}_{\widehat{h'_{t}}}(\gamma).
\end{align*}
\end{proposition}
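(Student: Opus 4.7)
The plan is to verify the four NDR axioms of Definition~\ref{NDR} by transferring the corresponding properties of the $(D_+(2),D_0(2))$ NDR pair from Lemma~\ref{D2NDRpair} up through the hat construction to $D_+(d)$, and then across the functional calculus $\mathfrak{B}$ to $\Hom(V,W)$. Throughout I will identify $\inj(V,W)^c_\infty$ (through $\rho$ and the eigenvalue map $\eta'$) with the preimage of $D_0(d)\subset D_+(d)$, i.e.\ the $\gamma$ for which $e_0(\rho(\gamma))=0$.

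First I would dispatch continuity. Continuity of $u$ is immediate, since $\rho$ is continuous by Lemma~\ref{themaprho}, the extreme eigenvalues depend continuously on $\rho(\gamma)$, and $u'$ is continuous. For $h_t$, the assignment $t\mapsto h'_t$ is continuous into $F_+(2)$ from Lemma~\ref{D2NDRpair}, then $\operatorname{hat}$ is continuous into $F_+(d)$ by Proposition~\ref{TheHatConstruction}, and $\mathfrak{B}$ is continuous by Proposition~\ref{fnalvarB}; composing gives continuity of $(t,\gamma)\mapsto h_t(\gamma)$ on $[0,1]\times S^{\Hom(V,W)}$. I would also note that $u$ and $h_t$ send the added basepoint $\infty$ to $0$ and $\infty$ respectively, since the ``hat-extended'' facial self-maps of $D_+(d)$ fix $\infty$ by construction and $\phi(\infty)=-1$, at which $u''=0$.

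Next I would verify the four NDR conditions using the explicit formula in Proposition~\ref{fnalvarB}. Writing $h'_t(t_0,t_1)=(g_t(t_0,t_1),g_t(t_0,t_1)+k_t(t_0,t_1))$:
\begin{enumerate}
\item At $t=1$, $h'_1=\operatorname{id}$, so $g_1(t_0,t_1)=t_0$ and $k_1(t_0,t_1)=t_1-t_0$; substituting into the formula of Proposition~\ref{TheHatConstruction} shows $\widehat{h'_1}=\operatorname{id}_{D_+(d)}$, and $\mathfrak{B}_{\operatorname{id}}(\gamma)=\gamma$ by the explicit description.
\item For $\gamma\in\inj(V,W)^c$ we have $e_0(\rho(\gamma))=0$. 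Since $h'_t$ preserves $D_0(2)$, $g_t(0,t_1)=0$ and $k_t(0,t_1)=t_1$; the hat formula then evaluates to $\widehat{h'_t}(0,e_1,\ldots,e_{d-1})_i=\frac{e_i}{e_{d-1}}\cdot e_{d-1}=e_i$ (with the obvious reading when $e_{d-1}=0$). Hence $\mathfrak{B}_{\widehat{h'_t}}(\gamma)=\gamma$.
\item If $u(\gamma)<1$ then $u'(e_0,e_{d-1})<1$, so $h'_0(e_0,e_{d-1})\in D_0(2)$, i.e.\ $g_0(e_0,e_{d-1})=0$. Then the first coordinate of $\widehat{h'_0}(e_0,\ldots,e_{d-1})$ is $0$, so $h_0(\gamma)$ has smallest singular value $0$ and thus lies in $\inj(V,W)^c_\infty$.
\item Finally, $u(\gamma)=0$ iff $u'(e_0(\rho(\gamma)),e_{d-1}(\rho(\gamma)))=0$, iff $\phi(e_0,e_{d-1})$ lies on the upper semicircle $Y$, iff $e_0=0$ (using the explicit form of $\phi$), iff $\gamma\in\inj(V,W)^c_\infty$.
\end{enumerate}

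The one place deserving care is the boundary behaviour at the compactification point and at the degenerate locus $t_0=t_{d-1}$ in $D_+(d)$, where the piecewise definition of $\hat{f}$ activates; the computations in items~(2) and~(3) must be read with this in mind, but in both cases the values $g_t(0,t_1)=0$ and $g_1(t_0,t_1)=t_0$ make the limits trivial. Beyond this the argument is a mechanical translation: the NDR structure on $(D_+(2),D_0(2))$ dictates everything, and the functional calculus is simply the conduit that carries it through the eigenvalue/quotient diagrams of Lemma~\ref{RhoIsProper}, Lemma~\ref{TheMapMu} and Proposition~\ref{fnalvarB}.
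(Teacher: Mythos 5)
Your verification is correct and follows exactly the route the paper intends (the paper omits the proof as ``simple to check'', but the intended argument is precisely this transfer of the NDR data on $(D_{+}(2),D_{0}(2))$ through the hat construction and $\mathfrak{B}$, using the explicit eigenvalue description of $\mathfrak{B}_{f}(\gamma)$). All four axioms check out as you compute them, and your attention to the degenerate locus $t_{0}=t_{d-1}$ and the point at infinity covers the only delicate spots.
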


It is standard that one can build a cofibre sequence from an NDR pair. In our case an NDR pair $(X,A)$ produces a cofibre sequence
\[
A\overset{i}{\to} X\overset{p}{\to} \frac{X}{A}\overset{e}{\to}\Sigma A
\]
where $i$ is the inclusion, $p$ the collapse and $e$ the composition $X/A\overset{r}{\to} C_{i}\overset{d}{\to}\Sigma A$ with $d$ the standard collapse and $r$ the map
\begin{align*}
r\co \frac{X}{A}&\to C_{i}\\
x&\mapsto (u(x),h_{0}(x)).
\end{align*}
The below result then follows.

\begin{corollary}\label{HomCofibSeq} We have a cofibre sequence
\[
\inj(V,W)^{c}_{\infty}\overset{i}{\to}S^{\Hom(V,W)}\overset{p}{\to}\inj(V,W)_{\infty}\overset{e}{\to}\Sigma\inj(V,W)^{c}_{\infty}.
\]
\end{corollary}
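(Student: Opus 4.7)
The plan is to apply the standard NDR-pair-to-cofibre-sequence construction recalled just before the statement to the NDR pair produced by Proposition~\ref{homNDRpair}, and then identify the quotient $S^{\Hom(V,W)}/\inj(V,W)^{c}_{\infty}$ with $\inj(V,W)_{\infty}$.

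First, I would invoke Proposition~\ref{homNDRpair} to obtain the cofibre sequence
\[
\inj(V,W)^{c}_{\infty}\overset{i}{\to}S^{\Hom(V,W)}\overset{p}{\to}S^{\Hom(V,W)}/\inj(V,W)^{c}_{\infty}\overset{e}{\to}\Sigma\inj(V,W)^{c}_{\infty},
\]
where $i$ is the closed inclusion, $p$ the collapse, and $e$ the composition $r$ followed by $d$ written out just before the corollary using the explicit NDR data $u$ and $h_{0}$ from Proposition~\ref{homNDRpair}.

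The second step is to identify the quotient as $\inj(V,W)_{\infty}$. Since $\inj(V,W)$ is open in $\Hom(V,W)$ (being the complement of the closed set $\inj(V,W)^{c}$), the inclusion $\inj(V,W)\hookrightarrow\Hom(V,W)$ extends to a based map $S^{\Hom(V,W)}\twoheadrightarrow\inj(V,W)_{\infty}$ sending $\inj(V,W)^{c}$ and $\infty$ to the added basepoint; this is precisely the characterising map of the one-point compactification of an open subset, and it identifies $\inj(V,W)_{\infty}$ with $S^{\Hom(V,W)}/\inj(V,W)^{c}_{\infty}$ via Lemma~\ref{quotientandsubspacearethesame} applied to the inclusion $\inj(V,W)\hookrightarrow\Hom(V,W)$.

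The hard part is largely behind us, since the heavy lifting went into setting up the NDR pair; all that remains is a routine check that the homeomorphism above is compatible with the map $p$ arising from the NDR construction, so that we can rewrite the quotient term as $\inj(V,W)_{\infty}$. With that identification in place the displayed cofibre sequence is exactly the one asserted by the corollary. I do not foresee any obstacle beyond this bookkeeping.
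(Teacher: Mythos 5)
Your proposal is correct and follows essentially the same route as the paper: apply the standard NDR-pair-to-cofibre-sequence construction to the pair of Proposition~\ref{homNDRpair} and identify the quotient $S^{\Hom(V,W)}/\inj(V,W)^{c}_{\infty}$ with $\inj(V,W)_{\infty}$ via the standard homeomorphism $Z_{\infty}/(Z\setminus U)_{\infty}\cong U_{\infty}$ for an open subset $U$ of a locally compact Hausdorff space (the paper leaves this identification implicit). One small quibble: Lemma~\ref{quotientandsubspacearethesame} does not literally apply to the open inclusion $\inj(V,W)\hookrightarrow\Hom(V,W)$, since an open inclusion is not proper; but the identification you need is elementary and does not require that lemma.
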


The proof of Theorem~\ref{IntroTower} relies on showing that many sequences are isomorphic to modifications of this sequence.

\subsection{Homotopy classification in the functional calculus}\label{Homotopy classification in the functional calculus}

The strength of the extended functional calculus is its homotopy properties. It is possible to determine a homotopy classification of maps of the form $\mathfrak{A}_{f}$ or $\mathfrak{B}_{f}$, we achieve this classification by proving Theorem~\ref{IntroHomotopy}. Let $f$, $g\co D(d)\to D(d)$ be facial and such that $f\simeq g$ through a facial homotopy $h_{t}$. Then $\mathfrak{A}_{f}\simeq \mathfrak{A}_{g}$ via $\mathfrak{A}_{h_{t}}$ and $\mathfrak{B}_{f}\simeq \mathfrak{B}_{g}$ via $\mathfrak{B}_{h_{t}}$. Hence we study the homotopy type of facial self-maps of $D(d)$.

\begin{definition}\label{faceintersectionBsigma} Let $\sigma\subseteq \{0,\ldots,d-2\}$. Then define $\bar{B}_{\sigma}$ to be the intersection of faces $\bigcap_{i\notin \sigma}F_{i}(D(d))$. Moreover define $\bar{B}[k]$ be the union of all $\bar{B}_{\sigma}$ with $|\sigma|\leqslant k$; note that $\bar{B}[0]=\bar{B}_{\emptyset}$ and $\bar{B}[d-1]=D(d)$. We say that a self-map of $\bar{B}[k]$ is facial if it preserves each $\bar{B}_{\sigma}$---this is consistent with the earlier definition of a facial map.
\end{definition}

We need two brief technical lemmas to proceed, recalling the notation $B^{n}$ for a ball of dimension $n$.

\begin{lemma}\label{extendingboundariesofballs} Suppose $X\cong B^{n+1}$ and $Y\cong B^{n}$, and let $p$ be a map $\partial X\to Y$. Then there exists an extension $\tilde{p}\co X\to Y$ of $p$.
\end{lemma}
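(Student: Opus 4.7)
The plan is to use the fact that $Y \cong B^{n}$ is contractible together with the fact that $X \cong B^{n+1}$ is a cone on its boundary.

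First, I would fix a homeomorphism $X \cong B^{n+1}$ and identify $B^{n+1}$ with the cone on $S^{n}$, so that $X$ is homeomorphic to $(\partial X \times [0,1])/(\partial X \times \{1\})$, with $\partial X \times \{0\}$ identified with $\partial X \subseteq X$ in the natural way. Next, choose a contraction of $Y$: since $Y \cong B^{n}$ there is a continuous map $H \co Y \times [0,1] \to Y$ with $H(y,0) = y$ for all $y$ and $H(y,1) = y_{0}$ for some fixed basepoint $y_{0} \in Y$.

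Then I would define the extension by
\[
\tilde{p}([x,t]) := H(p(x),t)
\]
for $[x,t]$ a point of the cone presentation of $X$, and $\tilde{p}(\text{cone point}) := y_{0}$. This is well-defined because when $t=1$ the value $H(p(x),1) = y_{0}$ does not depend on $x$, which is exactly the identification collapsing $\partial X \times \{1\}$ to the cone point. Continuity follows from continuity of $H$, $p$, and the quotient map defining the cone, and restricting to $\partial X = \partial X \times \{0\}$ gives $\tilde{p}(x) = H(p(x),0) = p(x)$, so $\tilde{p}$ extends $p$.

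There is no real obstacle: the statement is essentially the observation that mapping into a contractible target is unobstructed, packaged via the cone construction on $X$. The only minor care needed is in arranging the identification $X \cong C(\partial X)$ compatibly with the given inclusion $\partial X \subseteq X$, which is standard for balls.
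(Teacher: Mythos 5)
Your proof is correct and is essentially the paper's argument: the paper parameterizes $X\cong B^{n+1}$ by cone coordinates $(x,t)$ over the boundary and sets $\tilde{p}(x,t):=tp(x)$, which is exactly your construction with $H$ taken to be the straight-line contraction of $B^{n}$ to the origin. Your version merely phrases the contraction abstractly rather than using the linear structure of the ball.
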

\begin{proof} Without loss of generality we can assume that $X=B^{n+1}$ and $Y=B^{n}$. Parameterize $X$ by coordinates $(x,t)$ for $x$ a point on the boundary and $t$ a scalar. The extension is $\tilde{p}(x,t):=tp(x)$.
\end{proof}

\begin{lemma}\label{extendingballshomotopies} Suppose $Y\cong B^{n}$ and $f$, $g\co Y\to Y$ are maps such that $h\co [0,1]\times\partial Y\to Y$ gives a homotopy from $f|_{\partial Y}$ to $g|_{\partial Y}$. Then there exists an extension $\tilde{h}\co [0,1]\times Y\to Y$ that provides a homotopy between $f$ and $g$.
\end{lemma}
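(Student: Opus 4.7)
The plan is to reduce this to the previous lemma by recognizing $[0,1]\times Y$ as a ball of dimension $n+1$ and the subset $(\{0,1\}\times Y)\cup([0,1]\times\partial Y)$ as its boundary sphere.

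First I would assemble the data $f$, $g$, $h$ into a single map $H$ on the closed subspace
\[
K := (\{0\}\times Y)\cup(\{1\}\times Y)\cup([0,1]\times\partial Y)\subseteq [0,1]\times Y,
\]
defined by $H(0,y)=f(y)$, $H(1,y)=g(y)$, and $H(t,y)=h(t,y)$ for $y\in\partial Y$. The compatibility assumption that $h$ is a homotopy from $f|_{\partial Y}$ to $g|_{\partial Y}$ guarantees that these three rules agree on the pairwise intersections $\{0\}\times\partial Y$ and $\{1\}\times\partial Y$, so by the pasting lemma $H\co K\to Y$ is continuous.

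Next I would observe that $[0,1]\times Y\cong [0,1]\times B^n\cong B^{n+1}$ and that under this homeomorphism $K$ is carried to the boundary sphere $\partial B^{n+1}\cong S^n$. Since $Y\cong B^n$, the map $H\co K\to Y$ is a map from $\partial X$ to $Y$ for some $X\cong B^{n+1}$ and $Y\cong B^n$. Lemma~\ref{extendingboundariesofballs} then applies verbatim to produce an extension $\tilde H\co [0,1]\times Y\to Y$ of $H$. By construction $\tilde H(0,y)=f(y)$, $\tilde H(1,y)=g(y)$, and $\tilde H|_{[0,1]\times\partial Y}=h$, so $\tilde H$ is the desired homotopy.

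There is no serious obstacle here; the only points requiring care are verifying that $H$ is well-defined on the overlaps (which is exactly the hypothesis that $h$ matches $f$ and $g$ on $\partial Y$ at the endpoints) and identifying $K$ with $\partial([0,1]\times Y)$, both of which are routine.
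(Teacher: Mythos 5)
Your proof is correct and follows essentially the same route as the paper: it identifies $[0,1]\times Y$ with $B^{n+1}$, recognizes $(\{0,1\}\times Y)\cup([0,1]\times\partial Y)$ as its boundary, glues $f$, $g$, $h$ into a single map on that boundary, and applies Lemma~\ref{extendingboundariesofballs}. The only difference is that you spell out the pasting-lemma verification of continuity on the overlaps, which the paper leaves implicit.
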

\begin{proof} Set $X:=[0,1]\times Y$, it trivially follows that $X\cong B^{n+1}$. Note $\partial X\cong([0,1]\times \partial Y)\cup(\{0,1\}\times Y)$, define $p\co\partial X\to Y$ to be $h$ on $[0,1]\times \partial Y$, $f$ on $\{0\}\times Y$ and $g$ on $\{1\}\times Y$. Use Lemma~\ref{extendingboundariesofballs} to extend $p$ to the required homotopy $\tilde{h}$.
\end{proof}

We can now prove the key lemma.

\begin{lemma}\label{facialhomotopiesinduction} Let $f$, $g\co D(d)\to D(d)$ be facial and such that $f|_{\bar{B}[k]}\simeq g|_{\bar{B}[k]}$ through facial maps. Then this homotopy can be extended to a facial homotopy $f|_{\bar{B}[k+1]}\simeq g|_{\bar{B}[k+1]}$.
\end{lemma}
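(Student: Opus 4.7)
The plan is to induct over the top-dimensional new pieces. The key combinatorial observation is that
\[
\bar{B}[k+1] \;=\; \bar{B}[k]\;\cup\;\bigcup_{|\sigma|=k+1}\bar{B}_{\sigma},
\]
and if $\sigma,\sigma'$ are distinct subsets of $\{0,\ldots,d-2\}$ with $|\sigma|=|\sigma'|=k+1$ then $\bar{B}_{\sigma}\cap\bar{B}_{\sigma'}=\bar{B}_{\sigma\cap\sigma'}$ has $|\sigma\cap\sigma'|\leqslant k$ and hence lies in $\bar{B}[k]$. So it is enough to extend the given facial homotopy $H\co [0,1]\times\bar{B}[k]\to\bar{B}[k]$ across each $\bar{B}_{\sigma}$ individually, in a way that matches $H$ on the overlap $\partial\bar{B}_{\sigma}\subseteq\bar{B}[k]$, and then glue.

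The crucial geometric input is that $\bar{B}_{\sigma}\cong B^{k+2}$ is a closed $(k+2)$--ball whose boundary sphere is $\partial\bar{B}_{\sigma}=\bigcup_{i\in\sigma}\bar{B}_{\sigma\setminus\{i\}}\subseteq\bar{B}[k]$. To see this, observe that $\bar{B}'_{\sigma}$ consists of tuples in $D'(d)$ for which $t_{i}=t_{i+1}$ whenever $i\notin\sigma$, so its coordinates split into $|\sigma|+1=k+2$ blocks of equal values; writing $b_{0}\leqslant\ldots\leqslant b_{k+1}$ for the block values identifies $\bar{B}'_{\sigma}$ with $D'(k+2)$. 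Setting $s_{0}:=b_{0}$ and $u_{j}:=b_{j}-b_{j-1}$ further identifies this with $\Real\times\Real^{k+1}_{\geqslant 0}$, a closed half-space of $\Real^{k+2}$. Its one-point compactification, viewed inside $\Real^{k+2}\cup\{\infty\}=S^{k+2}$, is a closed hemisphere, i.e.\ a copy of $B^{k+2}$; the boundary $\{u_{j}=0\text{ for some }j\}$ compactifies to the equator $S^{k+1}$ and is precisely the claimed union of codimension-one faces.

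With that identification in place, I would apply Lemma~\ref{extendingballshomotopies}. Since $f$ and $g$ are facial they preserve each $\bar{B}_{\sigma}$, giving self-maps $f|_{\bar{B}_{\sigma}},g|_{\bar{B}_{\sigma}}\co\bar{B}_{\sigma}\to\bar{B}_{\sigma}$; the restriction of $H$ to $[0,1]\times\partial\bar{B}_{\sigma}$ provides the required boundary homotopy between them. Lemma~\ref{extendingballshomotopies} then produces an extension $\tilde{H}^{\sigma}\co [0,1]\times\bar{B}_{\sigma}\to\bar{B}_{\sigma}$. Doing this for every $\sigma$ with $|\sigma|=k+1$ and gluing the $\tilde{H}^{\sigma}$ to $H$ along the closed cover of $\bar{B}[k+1]$ yields a continuous homotopy, well-defined because on any overlap $\bar{B}_{\sigma}\cap\bar{B}_{\sigma'}\subseteq\bar{B}[k]$ both extensions restrict to $H$. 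Faciality of the result is then automatic: for $i\notin\sigma$ we have $\bar{B}_{\sigma}\subseteq F_{i}(D(d))$ so any self-map of $\bar{B}_{\sigma}$ preserves $F_{i}$, and for $i\in\sigma$ we have $F_{i}\cap\bar{B}_{\sigma}=\bar{B}_{\sigma\setminus\{i\}}\subseteq\partial\bar{B}_{\sigma}$, on which the extension is forced to equal the already-facial $H$.

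The main obstacle is really the ball--sphere identification of the second paragraph; once $\bar{B}_{\sigma}$ is recognised as $B^{k+2}$ with the correct boundary, everything else is a single invocation of Lemma~\ref{extendingballshomotopies} followed by a routine closed-cover gluing argument.
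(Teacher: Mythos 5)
Your proof is correct and follows essentially the same route as the paper's: extend the homotopy over each new top cell $\bar{B}_{\sigma}$ with $|\sigma|=k+1$ using Lemma~\ref{extendingballshomotopies} and glue along $\bar{B}[k]$. The only difference is that you supply the identification $\bar{B}_{\sigma}\cong B^{k+2}$ with $\partial\bar{B}_{\sigma}\subset\bar{B}[k]$ explicitly (via block coordinates and one-point compactification of a half-space), a detail the paper simply asserts.
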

\begin{proof} Let $h_{k}$ be the homotopy $[0,1]\times \bar{B}[k]\to \bar{B}[k]$ agreeing with $f$ on $0$ and $g$ on $1$. Now let $\bar{B}_{\sigma}$ be such that $|\sigma|=k+1$. Then we have $\bar{B}_{\sigma}\cong B^{k+2}\subset\bar{B}[k+1]$ and $\partial \bar{B}_{\sigma}\subset \bar{B}[k]$. Restrict $f$ and $g$ to $f|_{\bar{B}_{\sigma}}$ and $g|_{\bar{B}_{\sigma}}$ and restrict $h_{k}$ to a homotopy $f|_{\partial \bar{B}_{\sigma}}\simeq g|_{\partial \bar{B}_{\sigma}}$. This extends to give a homotopy $h_{k+1,\sigma}\co[0,1]\times \bar{B}_{\sigma}\to \bar{B}_{\sigma}$ via Lemma~\ref{extendingballshomotopies} which agrees with $h_{k}$ on the boundary, $f|_{\bar{B}_{\sigma}}$ on $0$ and $g|_{\bar{B}_{\sigma}}$ on $1$. Hence we have a family of maps $\{h_{k+1,\sigma}\}_{|\sigma|=k+1}$. If $\sigma\neq\tau$ observe that $\bar{B}_{\sigma}\cap \bar{B}_{\tau}\subset \bar{B}[k]$. Thus the two homotopies $h_{k+1,\sigma}$ and $h_{k+1,\tau}$ agree on the intersection as they are both $h_{k}$ on $\bar{B}[k]$. Patch the family together to get a homotopy $h_{k+1}\co[0,1]\times \bar{B}[k+1]\to \bar{B}[k+1]$ extending $h_{k}$ and giving $f\simeq g$. That this homotopy is facial is trivial to observe.
\end{proof}

We now observe that $\bar{B}_{\emptyset}\cong S^{1}$. Hence the following theorem, a restatement of Theorem~\ref{IntroHomotopy}, follows by induction using Lemma~\ref{facialhomotopiesinduction}.

\begin{theorem}\label{facialhomotopiesviadegrees} Let $f$, $g\co D(d)\to D(d)$ be facial and such that $f$ and $g$ have the same
degree on $\bar{B}_{\emptyset}$. Then $f\simeq g$ through facial maps.
\end{theorem}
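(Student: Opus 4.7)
The plan is to proceed by induction on the filtration index $k$, extending a facial homotopy from $f|_{\bar{B}[k]} \simeq g|_{\bar{B}[k]}$ up to a facial homotopy on all of $D(d) = \bar{B}[d-1]$, with Lemma~\ref{facialhomotopiesinduction} providing the inductive step.

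For the base case $k = 0$, the observation $\bar{B}[0] = \bar{B}_{\emptyset} \cong S^{1}$ is key. Both $f$ and $g$ restrict to self-maps of $\bar{B}_{\emptyset}$ because they are facial (so they preserve every face, and hence the intersection of all faces). By hypothesis these restrictions have the same degree as self-maps of $S^{1}$, so the classical homotopy classification of self-maps of the circle gives a homotopy $h_{0}\co [0,1] \times \bar{B}_{\emptyset} \to \bar{B}_{\emptyset}$ between them. This is trivially facial, since on $\bar{B}_{\emptyset}$ the only face to preserve is $\bar{B}_{\emptyset}$ itself.

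For the inductive step, suppose we have built a facial homotopy $h_{k}\co [0,1] \times \bar{B}[k] \to \bar{B}[k]$ from $f|_{\bar{B}[k]}$ to $g|_{\bar{B}[k]}$. Then Lemma~\ref{facialhomotopiesinduction} directly produces an extension to a facial homotopy $h_{k+1}\co [0,1] \times \bar{B}[k+1] \to \bar{B}[k+1]$ from $f|_{\bar{B}[k+1]}$ to $g|_{\bar{B}[k+1]}$. Iterating this $d-1$ times yields the desired facial homotopy $h_{d-1}\co [0,1] \times D(d) \to D(d)$ from $f$ to $g$.

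The main work has already been absorbed into Lemma~\ref{facialhomotopiesinduction}, whose proof uses that each top-dimensional stratum $\bar{B}_{\sigma}$ with $|\sigma| = k+1$ is a ball with boundary in $\bar{B}[k]$, so the ball-extension lemmas apply face-by-face, and the extensions agree on overlaps because distinct such $\bar{B}_{\sigma}$ meet only inside $\bar{B}[k]$. So no real obstacle remains at this stage: the content of the theorem is essentially the combination of the degree classification of circle maps with the already-proved extension lemma, packaged as an induction.
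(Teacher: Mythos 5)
Your proposal is correct and follows the paper's own argument exactly: the base case is the degree classification of self-maps of $\bar{B}_{\emptyset}\cong S^{1}$, and the inductive step is precisely Lemma~\ref{facialhomotopiesinduction} applied $d-1$ times to reach $\bar{B}[d-1]=D(d)$.
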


Hence we now have a criterion for saying whether two maps $\mathfrak{A}_{f}$ and $\mathfrak{A}_{g}$ are homotopic: we have induced maps $f'$, $g'\co S^{1}\to S^{1}$ given by $f'(t):=f(t,\ldots,t)$ and $g'(t):=g(t,\ldots,t)$ and if $f'$ and $g'$ have the same degree then $\mathfrak{A}_{f}\simeq \mathfrak{A}_{g}$.

\section{An equivariant stable tower over isometries}\label{An equivariant stable tower over isometries} 

Recall the setup of $G$, $V_{0}$ and $V_{1}$ discussed in the introduction. We spend this section proving the following theorem, a more detailed technical statement of Theorem~\ref{IntroTower} which serves as the main result of the paper.

\begin{theorem}\label{TheMainResult} There is a natural tower of spectra
\[
\Ell_{\infty}\overset{\pi_{d_{0}}}{\to} X_{d_{0}-1}\overset{\pi_{d_{0}-1}}{\to}\ldots\overset{\pi_{2}}{\to} X_{1}\overset{\pi_{1}}{\to} S^{0}
\]
such that:
\begin{enumerate}
\item The map $\Ell_{\infty}\to S^{0}$ is the compactified version of the projection map $\Ell\to\text{pt}$.
\item The stable cofibre of the map $\pi_{k}\co X_{k}\to X_{k-1}$ is $G_{k}(V_{0})^{\Real\oplus \Hom(T,V_{1}-V_{0})\oplus s(T)}$, i.e.\ the triangle
\[
\xymatrix@C=1.5cm
{X_{k}\ar[d]_{\pi_{k}}&G_{k}(V_{0})^{\Hom(T,V_{1} - V_{0})\oplus s(T)}\ar[l]_{\phi_{k}\phantom{xxxxxxx}}\\
X_{k-1}\ar[ur]|\bigcirc_{\phantom{xxx}\delta_{k}}&}
\]
is a cofibre triangle.
\end{enumerate}
\end{theorem}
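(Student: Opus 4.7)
My plan is to build the tower using the rank stratification of $\Hom(V_{0},V_{1})$ and to identify the cofibres via the extended functional calculus developed in Section \ref{Extended functional calculus}. Let $H_{k}:=\{\gamma\in\Hom(V_{0},V_{1}):\operatorname{rank}(\gamma)\leqslant k\}$, so we have a filtration $\emptyset=H_{-1}\subset H_{0}\subset\cdots\subset H_{d_{0}-1}=\inj(V_{0},V_{1})^{c}\subset H_{d_{0}}=\Hom(V_{0},V_{1})$. Using the polar decomposition isomorphism $\kappa$ of Lemma \ref{thetaEalpha} we have stably $\Ell_{\infty}\simeq S^{-s(V_{0})}\wedge\inj(V_{0},V_{1})_{\infty}=S^{-s(V_{0})}\wedge\bigl(S^{\Hom(V_{0},V_{1})}/H_{d_{0}-1}\bigr)$. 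I would define each $X_{k}$ as a suitable (de)suspension of the quotient $S^{\Hom(V_{0},V_{1})}/H_{j(k)}$ for an appropriate reindexing $j(k)$, arranged so that $X_{d_{0}}=\Ell_{\infty}$ and $X_{0}=S^{0}$; the top and bottom values are fixed by combining the $\kappa$-desuspension by $S^{s(V_{0})}$ with the natural quotient structure.

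With $X_{k}$ defined, the map $\pi_{k}\co X_{k}\to X_{k-1}$ is induced by a collapse between adjacent quotients, and its stable cofibre reduces (up to suspension and the desuspension by $S^{s(V_{0})}$) to the successive stratum quotient $H_{k}/H_{k-1}$, i.e.\ the one-point compactification of the space of rank-$k$ homomorphisms $V_{0}\to V_{1}$. By polar decomposition a rank-$k$ map is determined by a triple $(T,\alpha,\theta)$ with $T=\Ker(\gamma)^{\perp}\in G_{k}(V_{0})$, $\alpha\in s_{++}(T)$ and $\theta\in\mathcal{L}(T,V_{1})$, so the rank-$k$ stratum fibres over $G_{k}(V_{0})$ with fibre $\inj(T,V_{1})$. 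Applying the cofibre sequence of Corollary \ref{HomCofibSeq} fibrewise over $G_{k}(V_{0})$ for the pair $(T,V_{1})$ and invoking Lemma \ref{cofibresequenceandbundles} produces a cofibre triangle of Thom-like spaces involving $G_{k}(V_{0})^{\Hom(T,V_{1})}$, and the fibrewise form of $\kappa$ then rewrites the injective piece as $G_{k}(V_{0})^{s(T)}$ smashed fibrewise with the bundle of Stiefel manifolds $\mathcal{L}(T,V_{1})$. A suitable stable description of this Stiefel bundle, combined with the resulting Thom decomposition, assembles the virtual bundle $\Hom(T,V_{1})+s(T)-\Hom(T,V_{0})=\Hom(T,V_{1}-V_{0})\oplus s(T)$ appearing in the statement; the $\mathbb{R}$-suspension then arises from rotating the cofibre triangle so as to read off the cofibre of $\pi_{k}$ rather than its fibre.

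The hardest step is this final identification of $H_{k}/H_{k-1}$ with the correct Thom spectrum over $G_{k}(V_{0})$: a sequence of rank-$k$ maps in $\Hom(V_{0},V_{1})$ can converge to a lower-rank map with the coimage $T_{n}$ also varying, so the ``total'' one-point compactification and the naive ``fibrewise'' compactification are a priori different as spaces. Resolving this is where Lemma \ref{quotientofacofibresequence} and Lemma \ref{cofibresequenceandbundles} do the heavy lifting, reducing the question to a manageable fibrewise cofibre computation that is then handled by the extended functional calculus. Once this is secured, naturality of the tower follows from the naturality of all ingredients, the bottom map $\Ell_{\infty}\to S^{0}$ is identified with the compactification of the projection to a point by direct inspection, and the top case $k=d_{0}$---where $G_{d_{0}}(V_{0})$ is a point and the Thom space reduces to a sphere of the expected dimension---provides a useful sanity check.
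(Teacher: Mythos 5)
Your proposal diverges from the paper's construction in an essential way, and the divergence is where the argument breaks. The successive quotients of the rank filtration are not the Thom spaces claimed in the theorem. The stratum of rank-exactly-$k$ maps is a bundle over $G_{k}(V_{0})$ whose fibre over $W$ is $\inj(W,V_{1})$, so $H_{k}/H_{k-1}$ has fibres $\inj(W,V_{1})_{\infty}\cong S^{s(W)}\wedge\mathcal{L}(W,V_{1})_{\infty}$ by the fibrewise form of \ref{thetaEalpha}. This is $G_{k}(V_{0})^{s(T)}$ smashed fibrewise with a bundle of Stiefel manifolds, exactly as you say --- but a Stiefel manifold is not a sphere, and the ``suitable stable description of this Stiefel bundle'' that you invoke to convert it into the virtual bundle $\Hom(T,V_{1}-V_{0})$ is precisely the content of the theorem being proved (for $W=V_{0}$ it \emph{is} the theorem). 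Neither \ref{quotientofacofibresequence} nor \ref{cofibresequenceandbundles} can supply this: they transport cofibre sequences, they do not split Stiefel manifolds. There is also a bookkeeping obstruction: the quotients $S^{\Hom(V_{0},V_{1})}/H_{j}$ run from $S^{\Hom(V_{0},V_{1})}$ (at $j=-1$) to $\inj(V_{0},V_{1})_{\infty}$ (at $j=d_{0}-1$), so after desuspending by $s(V_{0})$ no reindexing $j(k)$ lands on $S^{0}$ at the bottom, and the collapse maps between these quotients point the wrong way for a tower descending from $\Ell_{\infty}$ to $S^{0}$.

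The paper's tower is built from different spaces entirely: $\tilde{X}_{k}$ consists of pairs $(\alpha,\theta|_{P_{k}(\alpha)})$ with $\alpha\in s(V_{0})$ arbitrary and $\theta$ an isometry remembered only on the top-$k$ eigenspace sum $P_{k}(\alpha)$, with $\pi_{k}$ forgetting one more eigenspace's worth of $\theta$. The point of this choice is that after collapsing the degenerate locus $Y_{k}$ (where $\dim P_{k}(\alpha)<k$), \emph{both} $\tilde{X}_{k}/Y_{k}$ and $\tilde{X}_{k-1}/Y_{k}$ become bundles over $G_{k}(V_{0})$ whose fibres are the two \emph{outer} terms $\inj(W,V_{1})_{\infty}$ and $\Sigma\inj(W,V_{1})^{c}_{\infty}$ of the NDR cofibre sequence of \ref{HomCofibSeq}; the cofibre of the map between them therefore has fibre the \emph{middle} term $S^{\Real\oplus\Hom(W,V_{1})}$, a genuine sphere, and \ref{cofibresequenceandbundles} assembles these into the Thom space $G_{k}(V_{0})^{\Real\oplus\Hom(T,V_{1})\oplus s(T^{\bot})}$. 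The Stiefel manifolds cancel between consecutive stages instead of having to be split off, and the functional calculus (Theorem~\ref{facialhomotopiesviadegrees}) is used to identify the geometrically defined connecting map with the NDR connecting map up to facial homotopy. If you want to salvage your approach you would need to replace the rank filtration of $\Hom(V_{0},V_{1})$ by spaces that interpolate in this two-sided way; the rank filtration alone cannot produce the stated cofibres.
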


We first define, topologize and equip with group actions all the spectra in the tower. We do this mostly unstably. Recall that if $\alpha$ is self-adjoint then there is an inherent ordering on the eigenvalues $e_{j}(\alpha)$ and hence the eigenspaces $\Ker(\alpha-e_{j}(\alpha))$.

\begin{definition}\label{pkdefn} Let $P_{k}(\alpha)$ be the following subspace of $V_{0}$
\[
P_{k}(\alpha):=\left[\bigoplus_{j+k<d_{0}}(\Ker(\alpha-e_{j}(\alpha)))\right]^{\bot}.
\]
\end{definition}

\begin{definition}\label{Xkassets} Define the set
\[
\tilde{X}_{k}':=\{(\alpha,\theta):\alpha\in s(V_{0}), \theta\in \mathcal{L}(P_{k}(\alpha),V_{1})\}.
\]
\end{definition}

We now topologize this space.

\begin{definition}\label{Xkasspaces} There is a surjection 
\begin{align*}
\svo\times\Ell&\to \tilde{X}_{k}'\\
(\alpha,\theta)&\mapsto (\alpha,\theta|_{P_{k}(\alpha)}).
\end{align*}
Hence equip $\tilde{X}_{k}'$ with the topology of a quotient of $\svo\times\Ell$.
\end{definition}

This topology is useful, but later continuity arguments will be eased by an equivalent topology. The following lemma is simple to check.

\begin{lemma}\label{Xkothertopology} Recall $\rho$ and $\lambda_{k}$ from \ref{themaprho} and \ref{themaplambda}. We have a bijection
\begin{align*}
\tilde{X_{k}}'&\to\{(\alpha,\beta):\alpha\in\svo, \beta\co V_{0}\to V_{1}, \rho(\beta)=\lambda_{k}(\alpha)\}\\
(\alpha,\theta)&\mapsto (\alpha,-\theta\circ \lambda_{k}(\alpha)).
\end{align*}
We can topologize $\tilde{X}_{k}'$ as a subspace of $\svo\times \aich$, whereupon this bijection becomes a homeomorphism.
\end{lemma}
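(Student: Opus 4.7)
The plan is first to verify that the stated formula is a bijection by constructing an explicit inverse from the tools of Section~\ref{Extended functional calculus}, then to use Lemma~\ref{quotientandsubspacearethesame} to identify the subspace topology with the quotient topology from Definition~\ref{Xkasspaces}.

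For the forward map, given $(\alpha,\theta)$ set $\beta:=-\theta\circ\lambda_{k}(\alpha)$. By construction of $\lambda_{k}$ in Lemma~\ref{themaplambda}, the endomorphism $\lambda_{k}(\alpha)$ is self-adjoint, kills the eigenspaces for $e_{0}(\alpha),\ldots,e_{d_{0}-k-1}(\alpha)$, and has image exactly $P_{k}(\alpha)$. Hence composition with $\theta\co P_{k}(\alpha)\to V_{1}$ yields a well-defined element of $\aich$, and using $\theta^{\dag}\theta=1_{P_{k}(\alpha)}$ together with self-adjointness of $\lambda_{k}(\alpha)$ we get
\[
\beta^{\dag}\beta=\lambda_{k}(\alpha)\theta^{\dag}\theta\lambda_{k}(\alpha)=\lambda_{k}(\alpha)^{2},
\]
so that $\rho(\beta)=\lambda_{k}(\alpha)$, as required. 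For the inverse, given $(\alpha,\beta)$ with $\rho(\beta)=\lambda_{k}(\alpha)$, apply Lemma~\ref{themapsigma} to obtain a linear isometry $\sigma(\beta)\co(\Ker\beta)^{\bot}\to V_{1}$. Lemma~\ref{themaprho} gives $(\Ker\beta)^{\bot}=\IM(\rho(\beta))=\IM(\lambda_{k}(\alpha))=P_{k}(\alpha)$, so $\sigma(\beta)\in\mathcal{L}(P_{k}(\alpha),V_{1})$; setting $\theta:=-\sigma(\beta)$ supplies a preimage, and the polar-type factorization $\beta=\sigma(\beta)\circ\rho(\beta)$ from Lemma~\ref{themapsigma} confirms this inverts the forward map.

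For the topological statement, consider the continuous map
\[
f\co \svo\times\Ell\to \svo\times\aich,\qquad (\alpha,\theta)\mapsto(\alpha,-\theta\circ\lambda_{k}(\alpha)).
\]
Its image is precisely $\{(\alpha,\beta):\rho(\beta)=\lambda_{k}(\alpha)\}$, and it factors as the quotient map of Definition~\ref{Xkasspaces} followed by the bijection under consideration. The Stiefel manifold $\Ell$ is compact, so for any compact $K\subseteq\svo\times\aich$ the preimage $f^{-1}(K)$ is closed in the product of the first-coordinate projection of $K$ with $\Ell$, hence compact; thus $f$ is proper. Applying Lemma~\ref{quotientandsubspacearethesame} to $f$ (with $X=\svo\times\Ell$ and $Z=\svo\times\aich$) identifies the subspace topology on the image with the quotient topology from $\svo\times\Ell$, which is by definition the topology on $\tilde{X}_{k}'$. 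The only delicate point is the verification of properness of $f$, and this collapses to compactness of the Stiefel manifold $\Ell$.
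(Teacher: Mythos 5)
Your argument is correct and takes essentially the route the paper intends: the bijection is verified via the polar decomposition machinery ($\rho$, $\sigma$, $\lambda_{k}$), and the identification of the quotient topology of Definition~\ref{Xkasspaces} with the subspace topology is exactly the advertised application of Lemma~\ref{quotientandsubspacearethesame} to the proper map $(\alpha,\theta)\mapsto(\alpha,-\theta\circ\lambda_{k}(\alpha))$, with properness reducing to compactness of $\Ell$. The paper declares this ``simple to check'' and omits the details; your write-up supplies precisely those details.
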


A quick check using \ref{quotientandsubspacearethesame} shows that these two topologies are the same.

\begin{definition}\label{mapspik} Define maps $\tilde{\pi}_{k}'\co\tilde{X}_{k}'\to \tilde{X}_{k-1}'$ by
\[
(\alpha,\theta)\mapsto(\alpha,\theta|_{P_{k-1}(\alpha)}).
\]
\end{definition}

The strictly commutative diagram
\[
\xymatrix{\svo\times\Ell\ar@{-}[d]_{1}\ar@{->>}[r]&\tilde{X}_{k}'\ar[d]^{\tilde{\pi}_{k}'}\\
\svo\times\Ell\ar@{->>}[r]&\tilde{X}_{k-1}'}
\]
proves that this map is well-defined, continuous and proper. Hence we define $\tilde{X}_{k}:=(\tilde{X}_{k}')_{\infty}$ and $\tilde{\pi}_{k}:=(\tilde{\pi}'_{k})_{\infty}$.

\begin{definition}\label{thespectraXk} Define the spectra $X_{k}:=S^{-\svo}\wedge\Sigma^{\infty} \tilde{X}_{k}$ and maps $\pi_{k}:=\Sigma^{-\svo}\Sigma^{\infty}\tilde{\pi}_{k}$.
\end{definition}

The unstable tower
\[
\tilde{X}_{d_{0}}=S^{\svo}\wedge\Ell_{\infty}\to\ldots\to \tilde{X}_{0}=S^{\svo}
\]
induces a stable tower
\[
X_{d_{0}}=\Ell_{\infty}\to\ldots\to X_{0}=S^{0}.
\]
It is clear that the map $\Ell_{\infty}\to S^{0}$ comes from the projection. Regarding equivariance, we recall that for any representations $V$ and $W$ the space $s(V)$ has an action by conjugation and that $\mathcal{L}(V,W)$ has an action by conjugation.

\begin{definition}\label{actiononXk} Equip $\tilde{X}_{k}'$ with the action $g.(\alpha,\theta):=(g.\alpha,g.\theta)$.
\end{definition}

Again, it's standard to check that this action is well-defined, is compatible with the topologies, makes the $\tilde{X}_{k}'$ into $G$--spaces and makes the $\tilde{\pi}_{k}'$ into $G$--maps. The $G$--spectra $X_{k}$ inherit their action from $\tilde{X}_{k}'$. Part $1$ of Theorem~\ref{TheMainResult} follows.

We also mention the topology on what we claim are the cofibres.

\begin{definition}\label{Zk} Define
\[
\tilde{Z}_{k}:=\{(W,\gamma,\psi):W\in G_{k}(V_{0}),\gamma\in\Hom(W,V_{1}),\psi\in s(W^{\bot})\}.
\]
\end{definition}

We topologize this via the following lemma. We first topologize $G_{k}(V_{0})$ as homeomorphic to 
\[
G'_{k}(V_{0}):=\{\pi\in \svo:\pi^{2}=\pi, \operatorname{trace}(\pi)=k\},
\]
this is a compact subset of $\svo$. 

\begin{lemma}\label{topologizingthefibrebundles} We have a bijection between $\tilde{Z}_{k}$ and the space
\[
\{(\pi,\beta,\xi):\pi\in G_{k}'(V_{0}),\beta\in\aich, \xi\in\svo, \beta\circ (1-\pi)=0,\xi\circ\pi=0\}
\]
given by 
\begin{align*}
(W,\gamma,\psi)&\mapsto (1_{W}\oplus 0_{W^{\bot}},\gamma\circ(1_{W}\oplus 0_{W^{\bot}}),\psi\circ(1_{W^{\bot}}\oplus 0_{W}))\\
(\IM(\pi),\beta|_{\IM(\pi)},\xi|_{\IM(1_{V_{0}}-\pi)})&\mapsfrom (\pi,\beta,\xi).
\end{align*}
Thus $\tilde{Z}_{k}$ is a subspace of $G_{k}'(V_{0})\times \aich\times\svo$. Moreover we have a surjection 
\begin{align*}
\mathcal{L}(\Complex^{k}\oplus \Complex^{d_{0}-k},V_{0})\times\Hom(\Complex^{k},V_{1})\times s(\Complex^{d_{0}-k})&\to \tilde{Z}_{k}\\
((\zeta,\eta),\gamma_{0},\psi_{0})&\mapsto (\IM(\zeta),\gamma_{0}\circ \zeta^{\dag},\eta\circ\psi_{0}\circ\eta^{\dag}).
\end{align*}
Hence $\tilde{Z}_{k}$ can also be topologized as a quotient. These two topologies are the same.
\end{lemma}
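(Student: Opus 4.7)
The plan is to handle this in three stages corresponding to the three claims: the bijection, the quotient presentation, and the equality of the two topologies.

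First I would verify the bijection by direct computation, which is a straightforward exercise in orthogonal decomposition. Given $(W,\gamma,\psi)\in\tilde Z_k$, the described triple $(\pi,\beta,\xi)$ manifestly satisfies $\pi^2=\pi$, $\operatorname{trace}(\pi)=k$, $\beta\circ(1-\pi)=0$ and $\xi\circ\pi=0$; conversely given any such $(\pi,\beta,\xi)$ one sets $W:=\IM(\pi)$, $\gamma:=\beta|_W$ and $\psi:=\xi|_{W^\bot}$, and mutual inverseness is immediate. The formula for the surjection from $\mathcal{L}(\Complex^k\oplus\Complex^{d_0-k},V_0)\times\Hom(\Complex^k,V_1)\times s(\Complex^{d_0-k})$ then follows by computing the composition: since $\zeta$ is a linear isometry, $\zeta\zeta^\dag$ is the orthogonal projection onto $W=\IM(\zeta)$, and similarly for $\eta$. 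That it is indeed surjective comes from choosing any orthonormal bases of $W$ and $W^\bot$. All of this is the routine fact-checking the paper has flagged as left to the reader.

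Second, I would observe that the equations $\pi^2=\pi$, $\operatorname{trace}(\pi)=k$, $\beta\circ(1-\pi)=0$ and $\xi\circ\pi=0$ cut out a closed subspace of the locally compact Hausdorff ambient space $G_k'(V_0)\times \aich\times\svo$, so the subspace topology makes $\tilde Z_k$ into a locally compact Hausdorff space.

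The third stage is the real content, and I would dispose of it using Lemma~\ref{quotientandsubspacearethesame}. Set
\[
X:=\mathcal{L}(\Complex^k\oplus\Complex^{d_0-k},V_0)\times \Hom(\Complex^k,V_1)\times s(\Complex^{d_0-k}),
\]
\[
Z:=G_k'(V_0)\times\aich\times\svo,
\]
and define $f\co X\to Z$ by $((\zeta,\eta),\gamma_0,\psi_0)\mapsto(\zeta\zeta^\dag,\gamma_0\circ\zeta^\dag,\eta\circ\psi_0\circ\eta^\dag)$. This map is continuous, and by the bijection of the first step its image is exactly $\tilde Z_k\subseteq Z$. If $f$ is proper then Lemma~\ref{quotientandsubspacearethesame} supplies a unique topology on $\tilde Z_k=f(X)$ making $p\co X\twoheadrightarrow\tilde Z_k$ a proper quotient and $j\co\tilde Z_k\rightarrowtail Z$ a closed inclusion; that unique topology is simultaneously the quotient topology from $X$ and the subspace topology from $Z$, which is precisely the desired conclusion.

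The main obstacle is therefore properness of $f$. The first factor $\mathcal{L}(\Complex^{d_0},V_0)$ is compact, so properness reduces to the vector-space factors. The key trick is that because $\zeta$ and $\eta$ are parts of an isometry, $\zeta^\dag\zeta=1_{\Complex^k}$ and $\eta^\dag\eta=1_{\Complex^{d_0-k}}$, so from $\beta=\gamma_0\circ\zeta^\dag$ and $\xi=\eta\circ\psi_0\circ\eta^\dag$ we recover
\[
\gamma_0=\beta\circ\zeta,\qquad \psi_0=\eta^\dag\circ\xi\circ\eta.
\]
Since $\zeta$ and $\eta$ have operator norm $1$, a bound on $(\beta,\xi)$ gives a bound on $(\gamma_0,\psi_0)$, and properness follows from the standard boundedness criterion mentioned in Section~\ref{Conventions}. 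This completes the proof.
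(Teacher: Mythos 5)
Your proposal is correct and follows the paper's own route: the paper likewise disposes of the lemma by citing Lemma~\ref{quotientandsubspacearethesame}, with the bijection/surjection checks and the properness argument (via the boundedness criterion of Section~\ref{Conventions}) left to the reader exactly as you have filled them in. The recovery $\gamma_{0}=\beta\circ\zeta$, $\psi_{0}=\eta^{\dag}\circ\xi\circ\eta$ from $\zeta^{\dag}\zeta=1$, $\eta^{\dag}\eta=1$ is precisely the intended bound.
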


This result follows from an application of \ref{quotientandsubspacearethesame}. It is standard that $\tilde{Z}_{k}$ is a vector bundle over $G_{k}(V_{0})$.

\begin{definition}\label{EquivarianceOfBundles} Equip $G_{k}(V_{0})$ with the standard action $g.L:=g(L)$. Then equip $\tilde{Z}_{k}$ with the action $g.(W,\gamma,\psi):=(g.W,g.\gamma,g.\psi)$.
\end{definition}

There are things to check here, but it is an easy exercise to show that this action is well-defined, compatible with the topologies above and such that $\tilde{Z}_{k}$ is a $G$--vector bundle over $G_{k}(V_{0})$. Hence we can define the Thom space $G_{k}(V_{0})^{\Hom(T,V_{1})\oplus s(T^{\bot})}$, this is $(\tilde{Z}_{k})_{\infty}$ as $G_{k}(V_{0})$ is compact.

We now stabilize this bundle to make the claimed cofibre. Consider the $G$--spectrum
\[
S^{-\svo}\wedge\Sigma^{\infty}G_{k}(V_{0})^{\Hom(T,V_{1})\oplus s(T^{\bot})}.
\]
We identify it with the spectrum we claim is the cofibre via the following lemma.

\begin{lemma}\label{equibundleids} We have
\[
\Hom(T,V_{1} - V_{0})\oplus s(T)\oplus \svo\cong \Hom(T,V_{1})\oplus s(T^{\bot})
\]
as $G$--bundles over $G_{k}(V_{0})$.
\end{lemma}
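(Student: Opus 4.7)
The plan is to proceed by algebraic cancellation and then decompose both sides using the tautological orthogonal splitting $V_{0} = T \oplus T^{\bot}$ of trivial bundles over $G_{k}(V_{0})$. Interpreting $\Hom(T, V_{1}-V_{0})$ as the virtual difference $\Hom(T,V_{1}) - \Hom(T,V_{0})$ and adding $\Hom(T,V_{0})$ to both sides, the stated identity is equivalent to the equality of genuine $G$-bundles $\Hom(T,V_{1}) \oplus s(T) \oplus s(V_{0}) \cong \Hom(T,V_{1}) \oplus \Hom(T,V_{0}) \oplus s(T^{\bot})$; cancelling the common summand $\Hom(T,V_{1})$ reduces the problem to
\[
s(T) \oplus s(V_{0}) \;\cong\; \Hom(T,V_{0}) \oplus s(T^{\bot}).
\]

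Next, I would use the splitting $V_{0} = T \oplus T^{\bot}$ to decompose both sides. The right-hand side is immediately $\End(T) \oplus \Hom(T,T^{\bot}) \oplus s(T^{\bot})$. On the left, writing any element of $s(V_{0})$ as a block matrix with respect to $V_{0} = T \oplus T^{\bot}$ yields diagonal blocks in $s(T)$ and $s(T^{\bot})$ and an off-diagonal part consisting of pairs $(B, B^{\dag})$ with $B^{\dag} \in \Hom(T,T^{\bot})$; projection onto the second coordinate identifies this off-diagonal subbundle with $\Hom(T,T^{\bot})$ as a real $G$-bundle. Hence $s(V_{0}) \cong s(T) \oplus s(T^{\bot}) \oplus \Hom(T,T^{\bot})$, and after cancelling $s(T^{\bot})$ and $\Hom(T,T^{\bot})$ from both sides, the problem reduces to
\[
s(T) \oplus s(T) \;\cong\; \End(T).
\]

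Finally, this last identification is supplied by the $G$-equivariant real-linear isomorphism $(\alpha, \beta) \mapsto \alpha + i\beta$, with inverse $\gamma \mapsto \bigl(\tfrac{1}{2}(\gamma + \gamma^{\dag}),\ \tfrac{1}{2i}(\gamma - \gamma^{\dag})\bigr)$, realising $\End(T)$ as the complexification of its real subbundle $s(T)$. Every construction used in the argument---orthogonal complement, block decomposition, adjoint, and scalar multiplication by $i$---is natural in the Hermitian data and commutes with the unitary conjugation action, so all isomorphisms are automatically $G$-equivariant. I do not anticipate a substantive obstacle here; the only mild care needed is tracking real versus complex structures and confirming that the off-diagonal self-adjoint part of $s(V_{0})$ matches $\Hom(T,T^{\bot})$ (rather than $\Hom(T^{\bot},T)$) as a real $G$-bundle, which is simply a matter of choosing the adjoint-compatible convention.
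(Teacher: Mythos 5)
Your proposal is correct and uses exactly the same three ingredients as the paper's proof: the block decomposition $s(V_{0})\cong s(T)\oplus s(T^{\bot})\oplus\Hom(T,T^{\bot})$, the identification $\End(T)\cong 2.s(T)$ via $\gamma=\alpha+i\beta$, and the splitting $\Hom(T,V_{0})\cong\End(T)\oplus\Hom(T,T^{\bot})$. The only difference is presentational: you organize the argument as a chain of reductions by cancelling matched summands (which is legitimate here since each cancellation is realised by an explicit summand-wise isomorphism), whereas the paper simply combines the three identities directly.
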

\begin{proof} We first note a standard bundle identity, $\svo\cong s(T)\oplus s(T^{\bot})\oplus \Hom(T,T^{\bot})$. This follows from the decompositions of $\alpha\in \svo$ into $2\times 2$ matrices
\[
\alpha=\left(\begin{array}{cc}\beta& \delta^{\dag}\\
\delta & \gamma\end{array}\right)
\]
for $\beta\in s(W)$, $\gamma\in s(W^{\bot})$ and $\delta\in\Hom(W,W^{\bot})$ for each $W$ a fixed subspace of $V_{0}$. This identity is also equivariant. We also have another equivariant identity $\Hom(T,T)\cong 2.s(T)$, this follows from the classical decomposition of $\alpha\in\End(W)$ as $\alpha=\alpha_{0}+i\alpha_{1}$ with $\alpha_{0}$, $\alpha_{1}\in s(W)$. Combining these with the identity $\Hom(T,V_{0})\cong\Hom(T,T)\oplus \Hom(T,T^{\bot})$ leads to the result. 
\end{proof}

It follows that
\[
S^{-\svo}\wedge\Sigma^{\infty}G_{k}(V_{0})^{\Hom(T,V_{1})\oplus s(T^{\bot})}\cong G_{k}(V_{0})^{\Hom(T,V_{1}-V_{0})\oplus s(T)}.
\]

\subsection{The cofibre sequences}\label{The cofibre sequences}

We now prove part $2$ of Theorem~\ref{TheMainResult}. Recall the homeomorphism $\kappa\co \svo\wedge\Ell_{\infty}\cong\inj(V_{0},V_{1})_{\infty}$ from \ref{thetaEalpha}. The below claim is easy to check.

\begin{lemma}\label{themaptau} We have a homeomorphism
\begin{align*}
\tau\co\tilde{X}_{d_{0}-1}'&\overset{\cong}{\to} \Real\times\inj(V_{0},V_{1})^{c}\\
(\alpha,\theta)&\mapsto (e_{0}(\alpha),-\theta\circ(\alpha-e_{0}(\alpha)))
\end{align*}
and hence $\tilde{X}_{d_{0}-1}\cong\Sigma\inj(V_{0},V_{1})^{c}_{\infty}$.
\end{lemma}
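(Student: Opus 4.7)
The plan is to mirror the proof of Lemma~\ref{thetaEalpha}, with the role of the exponential map $\Exp$ played instead by the eigenvalue shift $\alpha \mapsto \alpha - e_0(\alpha)$. The crucial observation is that $\alpha - e_0(\alpha) \in s_+(V_0)$ has smallest eigenvalue $0$, so its kernel is precisely the $e_0(\alpha)$-eigenspace of $\alpha$ and hence $\IM(\alpha - e_0(\alpha)) = \Ker(\alpha - e_0(\alpha))^{\bot} = P_{d_0-1}(\alpha)$ by \ref{pkdefn}. In particular, since that eigenspace is always nontrivial, the composite $-\theta\circ(\alpha - e_0(\alpha))$ is never injective, so $\tau$ genuinely lands in $\Real\times\inj(V_0,V_1)^c$.

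To verify $\tau$ is a homeomorphism I would exhibit an explicit inverse built from polar decomposition (\ref{themaprho} and \ref{themapsigma}):
\[
\tau^{-1}(e,\beta) := \bigl(\rho(\beta) + e\cdot 1_{V_0},\ -\sigma(\beta)\bigr).
\]
Non-injectivity of $\beta$ forces $0$ to be the smallest eigenvalue of $\rho(\beta)$, so $e_0(\rho(\beta)+e\cdot 1_{V_0})=e$ and the associated subspace $P_{d_0-1}(\rho(\beta)+e\cdot 1_{V_0})$ coincides with $\IM(\rho(\beta)) = \Ker(\beta)^{\bot}$, which is exactly the domain of $\sigma(\beta)$. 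Hence $\tau^{-1}$ lands in $\tilde X'_{d_0-1}$, and the identities $\tau\circ\tau^{-1}=1$ and $\tau^{-1}\circ\tau=1$ follow directly from the defining relation $\beta = \sigma(\beta)\circ\rho(\beta)$.

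I would work throughout with the subspace topology on $\tilde X'_{d_0-1}$ given by Lemma~\ref{Xkothertopology}, under which $(\alpha,\theta)$ is encoded as $(\alpha,-\theta\circ\lambda_{d_0-1}(\alpha))=(\alpha,-\theta\circ(\alpha-e_0(\alpha)))$ inside $\svo\times\aich$. In these coordinates $\tau$ is just $(\alpha,\beta)\mapsto(e_0(\alpha),\beta)$, so continuity of $\tau$ reduces to continuity of the bottom eigenvalue $e_0$ and continuity of $\tau^{-1}$ reduces to \ref{themaprho} and \ref{themapsigma}.

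Finally, since $\tau$ is a homeomorphism of locally compact Hausdorff spaces, it extends canonically to a homeomorphism of one-point compactifications, and the standard identification $(\Real\times Y)_{\infty}\cong S^1\wedge Y_{\infty} = \Sigma Y_{\infty}$ for locally compact Hausdorff $Y$ yields $\tilde X_{d_0-1}\cong \Sigma\inj(V_0,V_1)^c_{\infty}$. There is no real obstacle here: once the subspace description of \ref{Xkothertopology} is invoked, the whole argument is a transparent polar-decomposition bookkeeping exercise strictly parallel to the proof of \ref{thetaEalpha}, the only bookkeeping subtlety being consistently tracking which of the two equivalent topologies on $\tilde X'_{d_0-1}$ is in use at each step.
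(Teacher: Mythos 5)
Your proposal is correct and is exactly the check the paper leaves to the reader ("easy to check"), mirroring the polar-decomposition argument of \ref{thetaEalpha} and the paper's own inverse formulas in \ref{sigmainjchomeoextended}: the identification $\lambda_{d_0-1}(\alpha)=\alpha-e_0(\alpha)$, $P_{d_0-1}(\alpha)=\Ker(\alpha-e_0(\alpha))^{\bot}$, the inverse $(e,\beta)\mapsto(\rho(\beta)+e,-\sigma(\beta))$, and the reduction of continuity to the \ref{Xkothertopology} coordinates are all as intended. No gaps.
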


Thus there is a unique map $\chi$ such that
\[
\xymatrix{\tilde{X}_{d_{0}}\ar[d]_{\tilde{\pi}_{d_{0}}}\ar[r]^{\kappa\phantom{xxx}}&\inj(V_{0},V_{1})_{\infty}\ar[d]^{\chi}\\
\tilde{X}_{d_{0}-1}\ar[r]_{\tau\phantom{xxx}}&\Sigma\inj(V_{0},V_{1})^{c}_{\infty}}
\] 
commutes. This map is observed to be given as follows, recalling $\rho$ and $\sigma$ from \ref{themaprho} and \ref{themapsigma}
\begin{align*}
\chi\co:\inj(V_{0},V_{1})_{\infty}&\to \Sigma\inj(V_{0},V_{1})^{c}_{\infty}\\
\gamma&\mapsto\left(\begin{array}{c}e_{0}(\log(\rho(\gamma)))\\
\sigma(\gamma)\circ(\log(\rho(\gamma))-e_{0}(\log(\rho(\gamma))))\end{array}\right).
\end{align*}
Recall the cofibre sequence
\[
\inj(V_{0},V_{1})_{\infty}\overset{e}{\to}\Sigma\inj(V_{0},V_{1})^{c}_{\infty}\overset{-\Sigma i}{\to} S^{\aich\oplus\Real}\overset{-\Sigma p}{\to}\Sigma \inj(V_{0},V_{1})_{\infty}
\]
from \ref{HomCofibSeq} and the construction $\mathfrak{B}$ from \ref{fnalvarB}. It is a standard fact that $\mathfrak{B}_{f}\co S^{\aich}\to S^{\Real\oplus\aich}$ factors through $\inj(V_{0},V_{1})_{\infty}{\to}\Sigma\inj(V_{0},V_{1})^{c}_{\infty}$ if and only if $f$ factors through $D_{+}(d_{0})/D_{0}(d_{0})\to \Sigma D_{0}(d_{0})$. Hence for the specific map $f$ given by
\begin{align*}
f\co \frac{D_{+}(d_{0})}{D_{0}(d_{0})}&\to \Sigma D_{0}(d_{0})\\
(t_{0},\ldots,t_{d_{0}-1})&\mapsto(u'(t_{0},t_{d_{0}-1}),\hat{h_{0}'}(t_{0},\ldots,t_{d_{0}-1}))
\end{align*}
we observe that $e=\mathfrak{B}_{f}$, recalling $u'$ and $\hat{h_{0}'}$ from the various constructions in Section~\ref{Building a cofibre sequence using the functional calculus}.

\begin{proposition}\label{homotopyinthetopofthetower} Let $g$ be the map
\begin{align*}
g\co\frac{D_{+}(d_{0})}{D_{0}(d_{0})}&\to \Sigma D_{0}(d_{0})\\
(t_{0},\ldots,t_{d_{0}-1})&\mapsto(\log(t_{0}),0,\log(t_{1})-\log(t_{0}),\ldots,\log(t_{d_{0}-1})-\log(t_{0})).
\end{align*}
Then $\chi=\mathfrak{B}_{g}$ and the map $g$ is homotopic through facial maps to the map $f$ defined above. Hence $e\simeq \chi$.
\end{proposition}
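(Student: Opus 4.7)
The plan is to establish two facts, $\chi = \mathfrak{B}_{g}$ on the nose and $g \simeq f$ through facial maps, and then combine them via the homotopy-preservation property of $\mathfrak{B}$ (Proposition \ref{fnalvarB} together with the principle noted at the start of Section \ref{Homotopy classification in the functional calculus}) to conclude $e = \mathfrak{B}_{f} \simeq \mathfrak{B}_{g} = \chi$.

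The first fact is a direct computation. Given $\gamma \in \inj(V_{0},V_{1})$, choose an orthonormal eigenbasis $v_{0},\ldots,v_{d_{0}-1}$ of $\gamma^{\dag}\gamma = \rho(\gamma)^{2}$ with eigenvalues $e_{0}^{2}\leqslant\ldots\leqslant e_{d_{0}-1}^{2}$, and write $\gamma(v_{i}) = e_{i}m_{i}$ with $m_{i}$ orthonormal in $V_{1}$. Since $\gamma$ is injective, each $e_{i}$ is strictly positive and the logarithms make sense. Lemma \ref{themapsigma} gives $\sigma(\gamma)(v_{i}) = m_{i}$ and the functional calculus gives $\log(\rho(\gamma))(v_{i}) = \log(e_{i}) v_{i}$. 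Substituting into the formula for $\chi(\gamma)$ produces the element of $\Sigma\inj(V_{0},V_{1})^{c}_{\infty}$ with suspension coordinate $\log(e_{0})$ and homomorphism part sending each $v_{i}$ to $(\log(e_{i}) - \log(e_{0})) m_{i}$. Evaluating $g$ at the tuple $(e_{0},\ldots,e_{d_{0}-1})$ yields $(\log(e_{0}), 0, \log(e_{1}) - \log(e_{0}), \ldots, \log(e_{d_{0}-1}) - \log(e_{0}))$, whose first entry is read as the $\Sigma$-coordinate and whose remaining entries are the eigenvalues $s_{i}$ feeding into the explicit description of $\mathfrak{B}_{g}$ in Proposition \ref{fnalvarB}; that description returns precisely the same output, establishing $\chi = \mathfrak{B}_{g}$ on $\inj(V_{0},V_{1})$. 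Continuity extends the agreement over the added basepoint.

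For the facial homotopy $g \simeq f$, both maps share the same qualitative shape: each sends $(t_{0},\ldots,t_{d_{0}-1})$ to a pair consisting of a suspension coordinate depending on $t_{0}$ and $t_{d_{0}-1}$, together with a $D_{0}(d_{0})$-part that is a certain interpolation in the intermediate $t_{i}$. For $g$ the suspension coordinate is $\log(t_{0})$ and the $D_{0}(d_{0})$-part interpolates in the logarithms $\log(t_{i})$, whereas for $f$ the suspension coordinate is the NDR function $u'(t_{0}, t_{d_{0}-1})$ and the $D_{0}(d_{0})$-part is the hat construction applied to $h_{0}'$, interpolating in the $t_{i}$ themselves. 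I would construct the desired facial homotopy by convexly combining these two interpolation profiles after a common reparameterization of the suspension direction. Equivalently, by running the same inductive ball-extension argument as in Lemma \ref{facialhomotopiesinduction}, one extracts an analogue of Theorem \ref{facialhomotopiesviadegrees} for facial maps between the distinct facial spaces $D_{+}(d_{0})/D_{0}(d_{0})$ and $\Sigma D_{0}(d_{0})$, reducing the question to a degree comparison on the compactified diagonal.

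The main obstacle is the facial homotopy step: even though $f$ and $g$ share a common structural form, matching the intricate NDR-based ingredients defining $f$ (the conformal map $\phi$, the semicircular retraction $h''$, and the hat construction) against the clean logarithmic formula for $g$ requires careful tracking of the face-preservation conditions on each $F_{i}(D_{+}(d_{0}))$ throughout the homotopy, and of the condition that $D_{0}(d_{0})$ remains collapsed. Once this facial homotopy is in hand, the homotopy-preservation of $\mathfrak{B}$ promotes it to $\mathfrak{B}_{f} \simeq \mathfrak{B}_{g}$, and combined with the first step this yields $e \simeq \chi$.
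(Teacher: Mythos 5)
Your first step ($\chi=\mathfrak{B}_{g}$ via the explicit eigenbasis description in Proposition~\ref{fnalvarB}) is fine and is exactly what the paper means by ``simple to verify.'' Your overall strategy for the second step also matches the paper's: transport the question to a degree comparison on the circle $\bar{B}_{\emptyset}$ and invoke Theorem~\ref{facialhomotopiesviadegrees}. The paper does this concretely by exhibiting face-preserving homeomorphisms $D_{+}(d_{0})/D_{0}(d_{0})\cong D(d_{0})$ (via $\log$) and $\Sigma D_{0}(d_{0})\cong D(d_{0})$ (via $(s,0,t_{1},\ldots)\mapsto(s,s+t_{1},\ldots)$), so that $f$ and $g$ become genuine facial self-maps of $D(d_{0})$ and the theorem applies as stated, with no need for the ``analogue for maps between distinct facial spaces'' you gesture at.

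The genuine gap is that you never compute the degree. After the reduction, $g$ induces the identity on the diagonal circle (degree $1$), but the map induced by $f$ is built from the conformal map $\phi$, the half-disc NDR data $(u'',h'')$, and the hat construction, and its degree is not obvious; this computation is the entire mathematical content of the proposition, and you explicitly defer it as ``the main obstacle.'' The paper unwinds the definitions to get the induced self-map of $S^{1}$ explicitly, namely $t\mapsto\log\bigl(8e^{t}/(1-6e^{t})\bigr)$ for $t<-\log(6)$ and $\infty$ otherwise, recognizes it as the collapse $(f''')^{!}$ of the strictly increasing embedding $f'''(t)=\log\bigl(e^{t}/(8+6e^{t})\bigr)$, and homotopes $f'''$ linearly to the identity to conclude degree $1$. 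Without some version of this, the claim is unproved: a priori $f$ could induce a degree-$0$ map on the diagonal (it does collapse the locus $u'=1$ to the basepoint), in which case $f\not\simeq g$ facially. Your alternative suggestion of ``convexly combining the two interpolation profiles'' does not work as stated either, because the suspension coordinates of $f$ and $g$ live in incompatible models ($[0,1]$ with $1$ sent to the basepoint versus all of $\Real$ compactified), so a pointwise convex combination is not even well defined before the reparameterization that the paper's homeomorphisms supply.
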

\begin{proof} The first claim is simple to verify. For the second, we note we have face-preserving homeomorphisms
\begin{align*}
\frac{D_{+}(d_{0})}{D_{0}(d_{0})}&\overset{\cong}{\to} D(d_{0})\\
t&\mapsto \log(t)
\end{align*}
and
\begin{align*}
\Sigma D_{0}(d_{0})&\overset{\cong}{\to} D(d_{0})\\
(s,t_{0}=0,\ldots,t_{d_{0}-1})&\mapsto (s+t_{0},\ldots,s+t_{d_{0}-1}).
\end{align*}
Hence we have maps $f'$, $g'\co D(d_{0})\to D(d_{0})$ induced by maps $f$ and $g$. We show that $f'$ and $g'$ are homotopic using Theorem~\ref{facialhomotopiesviadegrees}, this is enough to complete the proof. 

We have induced maps $f''$, $g''\co S^{1}\to S^{1}$ given by $f''(t)=f'(t,\ldots,t)$ and $g''(t)=g'(t,\ldots,t)$. We claim these have the same degree. It is easy to see that $g''$ is the identity and hence has degree $1$. The map $f''$ is explicitly given by
\[
t\mapsto\left\{\begin{array}{ll}
\log\left(\frac{8e^{t}}{1-6e^{t}}\right)& \quad t< -\log(6)\\
\infty& \quad\text{otherwise.}
\end{array}
\right.
\]
This can be checked by following through with all the definitions. We have a map
\begin{align*}
f'''\co\Real&\to\Real\\
t&\mapsto \log\left(\frac{e^{t}}{8+6e^{t}}\right).
\end{align*}
This is a strictly increasing embedding and it is easy to see that $f''$ is the collapse $(f''')^{!}$. Let $h_{s}\co\Real\to \Real$ be the homotopy $h_{s}(t)=st+(1-s)f'''(t)$. The map $(h_{s})^{!}$ provides a homotopy between $f''$ and the identity, hence $f''$ is degree $1$ and the proposition follows. 
\end{proof}

It follows that we have a cofibre sequence
\[
\inj(V_{0},V_{1})_{\infty}\overset{\chi}{\to}\Sigma\inj(V_{0},V_{1})^{c}_{\infty}\overset{-\Sigma i}{\to} S^{\aich\oplus\Real}\overset{-\Sigma p}{\to}\Sigma \inj(V_{0},V_{1})_{\infty}
\]
and hence we can build a cofibre sequence
\[
\tilde{X}_{d_{0}}\overset{\tilde{\pi}_{d_{0}}}{\to}\tilde{X}_{d_{0}-1}\to S^{\aich\oplus\Real}\to\Sigma \tilde{X}_{d_{0}}.
\]
Applying $\Sigma^{\infty}$ and smashing throughout by $S^{-\svo}$, it follows that we have a cofibre sequence
\[
X_{d_{0}}\overset{\pi_{d_{0}}}{\to}X_{d_{0}-1}\to G_{d_{0}}(V_{0})^{\Hom(T,V_{1} - V_{0})\oplus s(T)}\to \Sigma X_{d_{0}}
\]
building the top of the tower. The final thing to check is that all the material above is compatible with the stated $G$--actions but this is simple to observe.

Let $k<d_{0}$, we now prove that the cofibre of $\tilde{\pi}_{k}\co\tilde{X}_{k}\to \tilde{X}_{k-1}$ is $G_{k}(V_{0})^{\Real\oplus\Hom(T,V_{1})\oplus s(T^{\bot})}$. We first simplify by taking a quotient.

\begin{definition}\label{thequotients} Set $Y_{k}':=\{(\alpha,\theta)\in \tilde{X}_{k}':\dim(P_{k}(\alpha))< k\}$ and $Y_{k}:=(Y_{k}')_{\infty}$. Then abusing notation somewhat $Y_{k}'$ can be thought of as both a subspace of $\tilde{X}_{k}'$ and $\tilde{X}_{k-1}'$ and we have a map $\varpi_{k}\co\tilde{X}_{k}/Y_{k}\to \tilde{X}_{k-1}/Y_{k}$ induced from the map $\tilde{\pi}_{k}$.
\end{definition}

By \ref{quotientofacofibresequence} the cofibre of $\varpi_{k}$ is naturally equivalent to the cofibre of the map $\tilde{\pi}_{k}$. We need a single technical continuity argument before we can proceed.

\begin{lemma}\label{Pkiscont} Define $s_{k}(V_{0}):=\{\alpha\in \svo:\Dim(P_{k}(\alpha))=k\}$, then $P_{k}:s_{k}(V_{0})\to G_{k}(V_{0})$ is continuous. 
\end{lemma}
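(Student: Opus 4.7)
The plan is to represent the orthogonal projection $\Pi_{k}(\alpha)$ onto $P_{k}(\alpha)$ continuously via the functional calculus of Section~\ref{Extended functional calculus}, and then use the identification $G_{k}(V_{0})\cong G'_{k}(V_{0})\subseteq \svo$ to transfer this to continuity of $P_{k}$ itself.

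First, I would observe that $\alpha\in s_{k}(V_{0})$ precisely when $e_{d_{0}-k-1}(\alpha)<e_{d_{0}-k}(\alpha)$ strictly, since otherwise a coincident eigenvalue would force $\sum_{j<d_{0}-k}\Ker(\alpha-e_{j}(\alpha))$ to have dimension less than $d_{0}-k$, pushing $\Dim P_{k}(\alpha)$ above $k$. The ordered eigenvalue maps $e_{j}\co \svo\to \Real$ are continuous by Weyl's perturbation estimate, so $s_{k}(V_{0})$ is open and the gap $e_{d_{0}-k}(\alpha)-e_{d_{0}-k-1}(\alpha)$ is a positive continuous function on it.

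Next I would build $\Pi_{k}(\alpha)$ via the functional calculus. For each $\alpha\in s_{k}(V_{0})$, define the monotone cutoff $g_{\alpha}\co\Real\to[0,1]$ that vanishes on $(-\infty,e_{d_{0}-k-1}(\alpha)]$, equals $1$ on $[e_{d_{0}-k}(\alpha),\infty)$, and interpolates linearly across the gap. Since $g_{\alpha}$ is non-decreasing, the associated product map $\tilde{g}_{\alpha}\co D(d_{0})\to D(d_{0})$ sending $(t_{0},\ldots,t_{d_{0}-1})\mapsto(g_{\alpha}(t_{0}),\ldots,g_{\alpha}(t_{d_{0}-1}))$ is facial. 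By the explicit description in Proposition~\ref{fnalvarA}, $\mathfrak{A}_{\tilde{g}_{\alpha}}(\alpha)$ is the endomorphism sharing eigenvectors with $\alpha$ but with eigenvalues $g_{\alpha}(e_{j}(\alpha))\in\{0,1\}$---zero on the eigenvectors indexed $j<d_{0}-k$ and one on the eigenvectors indexed $j\geqslant d_{0}-k$. Hence $\mathfrak{A}_{\tilde{g}_{\alpha}}(\alpha)=\Pi_{k}(\alpha)\in G'_{k}(V_{0})$.

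The main obstacle is then continuity of $\alpha\mapsto \mathfrak{A}_{\tilde{g}_{\alpha}}(\alpha)$: both the facial map $\tilde{g}_{\alpha}$ and the operator being fed in vary simultaneously. I would first check that $\alpha\mapsto\tilde{g}_{\alpha}$ is continuous from $s_{k}(V_{0})$ into $\FMap(D(d_{0}))$: $g_{\alpha}$ depends on $\alpha$ only through the continuous pair $(e_{d_{0}-k-1}(\alpha),e_{d_{0}-k}(\alpha))$, and $(a,b,t)\mapsto g_{(a,b)}(t)$ is jointly continuous on the open set $\{a<b\}\subset\Real^{2}$, which yields continuity in the compact-open topology. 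Combining this with the continuity of $\mathfrak{A}$ asserted in Proposition~\ref{fnalvarA} and with the evaluation map $\FMap(D(d_{0}))\wedge \svo_\infty\to \svo_\infty$, I obtain continuity of $\alpha\mapsto \Pi_{k}(\alpha)$, and hence of $P_{k}$.
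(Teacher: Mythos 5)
Your overall strategy --- realize $P_{k}(\alpha)$ as the image of a spectral projection obtained by applying a cutoff function across the eigenvalue gap $e_{d_{0}-k-1}(\alpha)<e_{d_{0}-k}(\alpha)$, and identify $G_{k}(V_{0})$ with the space of projections $G'_{k}(V_{0})\subseteq\svo$ --- is exactly the idea of the paper's proof. (One small slip in your first step: a coincidence $e_{d_{0}-k-1}(\alpha)=e_{d_{0}-k}(\alpha)$ makes the sum $\sum_{j<d_{0}-k}\Ker(\alpha-e_{j}(\alpha))$ \emph{larger} than $(d_{0}-k)$--dimensional, so $\Dim P_{k}(\alpha)$ drops \emph{below} $k$ rather than rising above it; indeed $\Dim P_{k}(\alpha)\leqslant k$ always, consistent with the definition of $Y_{k}'$ later in the paper. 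The characterization of $s_{k}(V_{0})$ you arrive at is nevertheless correct.)

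The concrete gap is in the appeal to Proposition~\ref{fnalvarA}. That proposition takes as input a facial map of the \emph{compactified} space $D(d_{0})$, i.e.\ a based self-map fixing the point at infinity, and the continuity statement for $\mathfrak{A}$ concerns $\FMap(D(d_{0}))$. Your $\tilde{g}_{\alpha}$ takes values in $[0,1]^{d_{0}}$, so it admits no continuous extension to $\infty$: along $(s,\ldots,s)$ with $s\to+\infty$ it tends to $(1,\ldots,1)$, while with $s\to-\infty$ it tends to $(0,\ldots,0)$. Hence $\tilde{g}_{\alpha}\notin\FMap(D(d_{0}))$ and $\mathfrak{A}_{\tilde{g}_{\alpha}}$ is not supplied by the cited result; to make your argument work you would need an unbased analogue of $\mathfrak{A}$ (which does exist, since $\nu'$ is a proper surjection of locally compact Hausdorff spaces and hence already a quotient before compactifying), or you could invoke the joint continuity $(f,\alpha)\mapsto f(\alpha)$ of the classical functional calculus directly. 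The paper sidesteps the two-parameter problem altogether: it fixes one function, the $\{0,1\}$--valued sign function on $\Real\setminus\{0\}$, which gives a continuous map on $\svo^{\times}=\{\alpha:e_{i}(\alpha)\neq 0\ \forall i\}$ by the ordinary functional calculus, and feeds it the continuously varying operator $\alpha-\tfrac{1}{2}(e_{d_{0}-k-1}(\alpha)+e_{d_{0}-k}(\alpha))$, whose spectrum avoids $0$ precisely because of the gap. That shift of the operator replaces your continuity argument for $\alpha\mapsto\tilde{g}_{\alpha}$ and removes the compactification issue entirely.
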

\begin{proof} Let $\svo^{\times}:=\{\alpha\in \svo:e_{i}(\alpha)\in\Real\backslash \{0\}\forall i\}$ and let $G(V_{0}):=\bigcup_{k} G_{k}(V_{0})$. Define $f\co\Real\backslash \{0\}\to \{0,1\}$ to be the function sending negative numbers to $0$ and positive numbers to $1$, then there is an associated continuous function $f\co\svo^{\times}\to G(V_{0})$. The claim is trivial if $k=0$ and if $k=d_{0}$ so now assume $0<k< d_{0}$, hence  $s_{k}(V_{0})=\{\alpha\in\svo:e_{d_{0}-k-1}(\alpha)<e_{d_{0}-k}(\alpha)\}$. Define a map $s_{k}(V_{0})\to \Real$ given by
\[
\alpha\mapsto 1/2(e_{d_{0}-k-1}(\alpha)+e_{d_{0}-k}(\alpha))
\]
and note that as $1/2(e_{d_{0}-k-1}(\alpha)+e_{d_{0}-k}(\alpha))$ is not an eigenvalue of $\alpha$ we have $\alpha-1/2(e_{d_{0}-k-1}(\alpha)+e_{d_{0}-k}(\alpha))\in s(V_{0})^{\times}$. The claim follows by observing that $P_{k}(\alpha)=f(\alpha-1/2(e_{d_{0}-k-1}(\alpha)+e_{d_{0}-k}(\alpha)))$.
\end{proof}

\begin{proposition}\label{thetaEalphaextended} Define
\[
\mathcal{I}_{k}':=\{(W,\gamma,\psi):W\in G_{k}(V_{0}),\gamma\in\inj(W,V_{1}),\psi\in s(W^{\bot})\}.
\]
Topologize $\mathcal{I}_{k}'$ as an open subspace of $\tilde{Z}_{k}$ from \ref{Zk} and define $\mathcal{I}_{k}:=(\mathcal{I}_{k}')_{\infty}$; note that this space has a $G$--action inherited from $\tilde{Z}_{k}$. Recall $\rho$ and $\sigma$ from \ref{themaprho} and \ref{themapsigma}, the maps $\mathfrak{q}_{k}\co\tilde{X}_{k}'\backslash Y_{k}'\to \mathcal{I}'_{k}$ and $\mathfrak{r}_{k}\co\mathcal{I}'_{k}\to \tilde{X}_{k}'\backslash Y_{k}'$ given by 
\[
\mathfrak{q}_{k}\co(\alpha,\theta)\mapsto\left(P_{k}(\alpha),-\theta\circ \Exp(\alpha|_{P_{k}(\alpha)}),-\log((e_{d_{0}-k}(\alpha)-\alpha)|_{P_{k}(\alpha)^{\bot}})\right)
\]
\[
\left((\log(e_{0}(\rho(\gamma)))-\Exp(-\psi))|_{W^{\bot}}\oplus \log(\rho(\gamma))|_{W},-\sigma(\gamma) \right)\mapsfrom(W,\gamma,\psi)\oc\mathfrak{r}_{k}
\]
are well-defined continuous $G$--maps that are inverses of each other, hence $\tilde{X}_{k}'\backslash Y_{k}'\cong \mathcal{I}'_{k}$ and thus $\tilde{X}_{k}/ Y_{k}\cong \mathcal{I}_{k}$.
\end{proposition}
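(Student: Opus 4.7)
The plan is to verify that $\mathfrak{q}_k$ and $\mathfrak{r}_k$ are mutually inverse homeomorphisms between the locally compact Hausdorff spaces $\tilde{X}_k'\setminus Y_k'$ and $\mathcal{I}_k'$, and then extend to the one-point compactifications. I would break the verification into four parts: well-definedness (that the images lie in the claimed codomains), mutual inversion at the level of sets, continuity, and $G$-equivariance.

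First I would check that $\mathfrak{q}_k$ lands in $\mathcal{I}_k'$. Outside $Y_k'$ we have $\Dim P_k(\alpha)=k$, so $P_k(\alpha)\in G_k(V_0)$. Since $\theta\in\mathcal{L}(P_k(\alpha),V_1)$ is an isometry and $\Exp(\alpha|_{P_k(\alpha)})$ is positive and invertible, the composite $-\theta\circ\Exp(\alpha|_{P_k(\alpha)})$ is injective. For the third coordinate, the eigenvalues of $\alpha$ on $P_k(\alpha)^{\bot}$ are $e_j(\alpha)$ for $j<d_0-k$; the condition $\Dim P_k(\alpha)=k$ forces $e_{d_0-k-1}(\alpha)<e_{d_0-k}(\alpha)$, so $(e_{d_0-k}(\alpha)-\alpha)|_{P_k(\alpha)^{\bot}}$ is strictly positive and its log is a well-defined self-adjoint endomorphism. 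Dually, for $\mathfrak{r}_k(W,\gamma,\psi)$ the eigenvalues of the output $\alpha$ on $W$ are those of $\log\rho(\gamma)$, all $\geq\log e_0(\rho(\gamma))$, while on $W^{\bot}$ they are $\log e_0(\rho(\gamma))-e^{-\psi_i}$, strictly smaller than $\log e_0(\rho(\gamma))$; hence $P_k(\alpha)=W$ and the image lies in $\tilde{X}_k'\setminus Y_k'$.

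Next I would verify mutual inversion by direct computation. Starting with $(W,\gamma,\psi)$: since $(-\sigma(\gamma))^{\dag}(-\sigma(\gamma))=1$, the expansion of $\gamma^{\dag}\gamma$ gives $\rho(\gamma)=\Exp(\alpha|_W)$, so applying $\mathfrak{q}_k$ returns $W$ in the first coordinate, $\gamma=\sigma(\gamma)\circ\rho(\gamma)$ by Lemma~\ref{themapsigma} in the second, and $\psi=-\log(\Exp(-\psi))$ in the third. In the other direction, if $(\alpha,\theta)\in\tilde{X}_k'\setminus Y_k'$ with $W=P_k(\alpha)$, one reads off $\rho(\gamma)=\Exp(\alpha|_W)$ and $\sigma(\gamma)=-\theta$, so $\log\rho(\gamma)=\alpha|_W$, while $\log e_0(\rho(\gamma))=e_{d_0-k}(\alpha)$ and $\Exp(-\psi)=(e_{d_0-k}(\alpha)-\alpha)|_{W^{\bot}}$; substituting these into $\mathfrak{r}_k$ recovers $\alpha|_{W^{\bot}}\oplus\alpha|_W=\alpha$ and $-\sigma(\gamma)=\theta$.

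For continuity, $\mathfrak{q}_k$ is assembled from $P_k$, continuous on $\tilde{X}_k'\setminus Y_k'$ by Lemma~\ref{Pkiscont}, together with orthogonal restriction, the functional-calculus maps $\Exp$ and $\log$, composition, and the continuous eigenvalue function $e_{d_0-k}$; $\mathfrak{r}_k$ uses the continuous $\rho$ and $\sigma$ from Lemmas~\ref{themaprho} and~\ref{themapsigma}, together with $\Exp$ and $\log$. Equivariance is then automatic since every ingredient is natural under conjugation by $G$. The main obstacle I anticipate is organising the continuity argument for $\mathfrak{q}_k$, because $P_k(\alpha)$ varies with $\alpha$ so that $\theta|_{P_k(\alpha)}$ and $\alpha|_{P_k(\alpha)^{\bot}}$ must be interpreted as sections of a continuously varying bundle of subspaces; this is exactly the role Lemma~\ref{Pkiscont} is designed to play, and once it is invoked the remaining checks reduce to standard continuity of the constituent operations. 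Once the unbased homeomorphism is established, it extends uniquely to the one-point compactifications, and since $Y_k'$ is closed in $\tilde{X}_k'$ these compactifications coincide with $\tilde{X}_k/Y_k$ and $\mathcal{I}_k$ respectively, completing the proof.
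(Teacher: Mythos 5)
Your proposal is correct and follows essentially the same route as the paper: the paper likewise treats well-definedness, mutual inversion and equivariance as routine checks and concentrates its entire effort on continuity. The only place you are lighter than the paper is exactly there --- the paper makes the continuity of $\mathfrak{q}_k$ and $\mathfrak{r}_k$ precise by exploiting the pairs of equivalent topologies guaranteed by Lemma~\ref{quotientandsubspacearethesame} (checking $\mathfrak{q}_k$ against the quotient presentation of $\tilde{X}_k'$ from \ref{Xkasspaces} and the subspace presentation of $\mathcal{I}_k'$ inside $G_k(V_0)\times\Hom(V_0,V_1)\times s(V_0)$, and checking $\mathfrak{r}_k$ against a frame-bundle quotient presentation of $\mathcal{I}_k'$ and the subspace presentation of $\tilde{X}_k'$ from \ref{Xkothertopology}), together with Lemma~\ref{Pkiscont}; this is the device your ``continuously varying bundle of subspaces'' remark is gesturing at, and making it explicit would close the only sketchy step.
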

\begin{proof} To prove this it needs to be checked that $\mathfrak{q}_{k}$ and $\mathfrak{r}_{k}$ are well-defined, that $\mathfrak{q}_{k}\circ\mathfrak{r}_{k}$ and $\mathfrak{r}_{k}\circ\mathfrak{q}_{k}$ are the identity and that $\mathfrak{q}_{k}$ and $\mathfrak{r}_{k}$ are continuous. These checks are standard, barring the continuity arguments. 

To check that $\mathfrak{q}_{k}$ is continuous, equip $\tilde{X}_{k}'\backslash Y_{k}$ with the quotient topology of \ref{Xkasspaces} and topologize $\mathcal{I}_{k}'$ as a subspace of $\tilde{Z}_{k}'$ and hence as a subspace of $G_{k}(V_{0})\times\aich\times\svo$ via \ref{topologizingthefibrebundles}. The continuity of $\mathfrak{q}_{k}$ then follows from continuity of the map
\begin{align*}
\{(\alpha,\theta)\in\svo\times\Ell:\Dim(P_{k}(\alpha))=k\}&\to G_{k}(V_{0})\times \aich\times\svo\\
(\alpha,\theta)&\mapsto(P_{k}(\alpha),-\theta\circ \Exp(\alpha),-\log(e_{d_{0}-k}(\alpha)-\alpha)).
\end{align*}
Continuity of this map follows from \ref{Pkiscont}. To demonstrate that $\mathfrak{r}_{k}$ is continuous firstly equip $\tilde{X}_{k}'\backslash Y_{k}$ with the subspace topology of \ref{Xkothertopology}. Next note that similar to \ref{topologizingthefibrebundles} we can equip $\mathcal{I}_{k}'$ with a quotient topology via the map
\begin{align*}
\mathcal{L}(\Complex^{k}\oplus \Complex^{d_{0}-k},V_{0})\times\inj(\Complex^{k},V_{1})\times s(\Complex^{d_{0}-k})&\to \mathcal{I}_{k}'\\
((\zeta,\eta),\gamma_{0},\psi_{0})&\mapsto (\IM(\zeta),\gamma_{0}\circ \zeta^{\dag},\eta\circ\psi_{0}\circ\eta^{\dag}).
\end{align*}
and note by \ref{quotientandsubspacearethesame} that this topology is equivalent to the subspace topology. Thus continuity of $\mathfrak{r}_{k}$ follows from continuity of the map
\[
\mathcal{L}(\Complex^{k}\oplus \Complex^{d_{0}-k},V_{0})\times\inj(\Complex^{k},V_{1})\times s(\Complex^{d_{0}-k})\to \svo\times\aich
\]
\[
((\zeta,\eta),\gamma_{0},\psi_{0})\mapsto ((\log(e_{0}(\rho(\gamma_{0}\circ\zeta^{\dag})))-\exp(-\eta\circ\psi_{0}\circ\eta^{\dag}))\circ(1_{V_{0}}-\zeta\zeta^{\dag})\oplus \log(\rho(\gamma_{0}\circ\zeta^{\dag}))\circ\zeta\zeta^{\dag},\gamma_{0}\circ\zeta^{\dag})
\]
which is continuous as standard.
\end{proof}

\begin{proposition}\label{sigmainjchomeoextended} Define
\[
\mathcal{J}_{k}':=\{(W,\delta,\psi):W\in
G_{k}(V_{0}),\delta\in\inj(W,V_{1})^{c},\psi\in s(W^{\bot})\}.
\]
Topologize $\mathcal{J}_{k}'$ as a subspace of $\tilde{Z}_{k}$ from \ref{Zk} and define $\mathcal{J}_{k}:=(\mathcal{J}_{k}')_{\infty}$; note this space has a $G$--action inherited from $\tilde{Z}_{k}$. Recall $\rho$, $\sigma$ and $\lambda_{k-1}$ from \ref{themaprho} and \ref{themapsigma} and \ref{themaplambda}, the maps $\mathfrak{f}_{k}\co\tilde{X}_{k-1}'\backslash Y_{k}'\to \Real\times \mathcal{J}_{k}'$ and $\mathfrak{g}_{k}\co\Real\times \mathcal{J}_{k}'\to \tilde{X}_{k-1}'\backslash Y_{k}'$ given by
\[
\mathfrak{f}_{k}\co(\alpha,\theta)\mapsto\left(e_{d_{0}-k}(\alpha),P_{k}(\alpha),-\theta\circ \lambda_{k-1}(\alpha)|_{P_{k}(\alpha)},-\log((e_{d_{0}-k}(\alpha)-\alpha)|_{P_{k}(\alpha)^{\bot}})\right)
\]
\[
\left((t-\Exp(-\psi))|_{W^{\bot}}\oplus (\rho(\delta)+t)|_{W}, -\sigma(\delta)\right)\mapsfrom(t,W,\delta,\psi)\oc\mathfrak{g}_{k}
\]
are well-defined continuous $G$--maps that are inverses of each other, hence $\tilde{X}_{k-1}'\backslash Y_{k}'\cong \Real\times \mathcal{J}_{k}'$ and thus $\tilde{X}_{k-1}/ Y_{k}\cong\Sigma \mathcal{J}_{k}$.
\end{proposition}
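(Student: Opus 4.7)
My plan is to follow the template of Proposition~\ref{thetaEalphaextended} essentially verbatim, with two modifications to account for the fact that we are now analysing the base $\tilde{X}_{k-1}$ rather than the top $\tilde{X}_k$: the natural eigenvalue that survives is $e_{d_0-k}(\alpha)$ rather than being absorbed by the exponential, which is why an extra $\Real$--factor appears and why $\lambda_{k-1}$ (rather than $\Exp$) is the relevant functional calculus operation on $P_k(\alpha)$. First I would verify well-definedness: for $\mathfrak{f}_k$ one uses that when $\dim P_k(\alpha)=k$ the image $P_k(\alpha)$ is a genuine $k$--dimensional subspace, that $\lambda_{k-1}(\alpha)$ annihilates the top eigenspace so $\rho(\theta\circ\lambda_{k-1}(\alpha)|_{P_k(\alpha)})$ has $e_{d_0-k}(\alpha)-\alpha|_{P_k(\alpha)}$ as a zero eigenvalue — hence the $\inj^c$ assertion — and that $(e_{d_0-k}(\alpha)-\alpha)|_{P_k(\alpha)^\bot}$ is strictly positive so its log is defined. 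For $\mathfrak{g}_k$ one just reads off the block decomposition of $V_0=W\oplus W^\bot$.

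Next I would check that $\mathfrak{f}_k$ and $\mathfrak{g}_k$ are mutually inverse by direct substitution. The key arithmetic fact is that $-\sigma(-\theta\circ\lambda_{k-1}(\alpha)|_{P_k(\alpha)})=\theta|_{P_k(\alpha)}$ together with $\rho(-\theta\circ\lambda_{k-1}(\alpha)|_{P_k(\alpha)})=\lambda_{k-1}(\alpha)|_{P_k(\alpha)}=\alpha|_{P_k(\alpha)}-e_{d_0-k}(\alpha)$, so plugging into $\mathfrak{g}_k\circ\mathfrak{f}_k$ reassembles $\alpha$ on both summands and recovers $\theta$. The reverse composition is similar and uses $e_0(\rho(\delta)+t)=t$ on the $W$--summand since $\delta\in\inj(W,V_1)^c$ forces $e_0(\rho(\delta))=0$; wait — here one has to be careful: the analogue in \ref{thetaEalphaextended} used $e_0$, but here the eigenvalue parameter is supplied by $t$ directly, so the bookkeeping is slightly different and I would verify it carefully by expanding on the two orthogonal summands.

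The main obstacle, as in the previous proposition, is continuity, and the strategy is identical: use \ref{quotientandsubspacearethesame} to switch freely between the quotient topology of \ref{Xkasspaces} and the subspace topology of \ref{Xkothertopology} on $\tilde{X}_{k-1}'\setminus Y_k'$, and between the subspace and quotient topologies on $\mathcal{J}_k'$ given by \ref{topologizingthefibrebundles}. For $\mathfrak{f}_k$ I would put the quotient topology on the source and the subspace topology on the target, and deduce continuity from continuity of the lifted map $\svo\times\Ell\to \Real\times G_k(V_0)\times\aich\times\svo$ sending $(\alpha,\theta)$ to $(e_{d_0-k}(\alpha),P_k(\alpha),-\theta\circ\lambda_{k-1}(\alpha),-\log(e_{d_0-k}(\alpha)-\alpha)|_{P_k(\alpha)^\bot})$; the only non-trivial component is $P_k$, which is continuous by Lemma~\ref{Pkiscont}, and $\lambda_{k-1}$ is continuous by Lemma~\ref{themaplambda}. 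For $\mathfrak{g}_k$ I would give $\mathcal{J}_k'$ its quotient topology and $\tilde{X}_{k-1}'\setminus Y_k'$ the subspace topology of \ref{Xkothertopology}, reducing continuity to continuity of an explicit map out of $\mathcal{L}(\Complex^k\oplus\Complex^{d_0-k},V_0)\times\inj(\Complex^k,V_1)^c\times s(\Complex^{d_0-k})\times\Real$ into $\svo\times\aich$, which is manifest from the formulas.

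Finally $G$--equivariance is immediate: every ingredient ($P_k$, $\rho$, $\sigma$, $\lambda_{k-1}$, $\log$, $\Exp$, restriction to invariantly-defined subspaces, and the eigenvalue $e_{d_0-k}$ which is a scalar) is $G$--natural by construction. Basedly extending the homeomorphism $\mathfrak{f}_k$ to one-point compactifications requires a quick properness check — bounding $\|\alpha\|$ and the operator norm of $\theta|_{P_k(\alpha)}$ in terms of $|t|$, $\|\delta\|$, $\|\psi\|$ — which fits the template of omitted properness arguments described in Section~\ref{Conventions}. The unreduced suspension $\Real\times\mathcal{J}_k'$ compactifies to $\Sigma\mathcal{J}_k$, giving the asserted $\tilde{X}_{k-1}/Y_k\cong \Sigma\mathcal{J}_k$.
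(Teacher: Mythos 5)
Your proposal follows the paper's proof essentially verbatim: the paper likewise treats well-definedness and the inverse identities as routine checks, and concentrates on continuity by equipping $\tilde{X}_{k-1}'\backslash Y_{k}'$ with the quotient topology of \ref{Xkasspaces} (resp.\ the subspace topology of \ref{Xkothertopology}) and $\mathcal{J}_{k}'$ with the subspace (resp.\ quotient) topology, lifting $\mathfrak{f}_{k}$ and $\mathfrak{g}_{k}$ to exactly the maps you describe and invoking Lemma~\ref{Pkiscont} for the only non-obvious component. The additional details you supply (the $\inj^{c}$ assertion via $\lambda_{k-1}(\alpha)$ killing the $e_{d_{0}-k}$--eigenspace, and $e_{0}(\rho(\delta))=0$ forcing $e_{d_{0}-k}(\alpha)=t$ in the reverse composition) are correct and are among the checks the paper leaves to the reader.
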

\begin{proof} As above, the only issues with this proof are the continuity statements. To show that $\mathfrak{f}_{k}$ is continuous, equip $\tilde{X}_{k-1}'\backslash Y_{k}$ with the quotient topology of \ref{Xkasspaces} and topologize $\Real\times\tilde{J}_{k}'$ as a subspace of $\Real\times\tilde{Z}_{k}'$ and hence as a subspace of $\Real\times G_{k}(V_{0})\times\aich\times\svo$ via \ref{topologizingthefibrebundles}. The continuity of $\mathfrak{f}_{k}$ then follows from continuity of the map
\[
\{(\alpha,\theta)\in\svo\times\Ell:\Dim(P_{k}(\alpha))=k\}\to \Real\times G_{k}(V_{0})\times\aich\times\svo
\]
\[
(\alpha,\theta)\mapsto(e_{d_{0}-k}(\alpha),P_{k}(\alpha),-\theta\circ(\alpha-e_{d_{0}-k}(\alpha)),-\log(e_{d_{0}-k}(\alpha)-\alpha)).
\]
Continuity of this map follows from \ref{Pkiscont}. As in the proof of \ref{thetaEalphaextended} $\mathcal{J}_{k}'$ can also be an equipped with an equivalent quotient topology given by the map
\begin{align*}
\mathcal{L}(\Complex^{k}\oplus \Complex^{d_{0}-k},V_{0})\times\inj(\Complex^{k},V_{1})^{c}\times s(\Complex^{d_{0}-k})&\to \mathcal{J}_{k}'\\
((\zeta,\eta),\gamma_{0},\psi_{0})&\mapsto (\IM(\zeta),\gamma_{0}\circ \zeta^{\dag},\eta\circ\psi_{0}\circ\eta^{\dag}).
\end{align*}
Equip $\tilde{X}_{k-1}'\backslash Y_{k}$ with the subspace topology of \ref{Xkothertopology}, then continuity of $\mathfrak{g}_{k}$ follows from continuity of the map
\[
\Real\times\mathcal{L}(\Complex^{k}\oplus \Complex^{d_{0}-k},V_{0})\times\inj(\Complex^{k},V_{1})^{c}\times s(\Complex^{d_{0}-k})\to \svo\times\aich
\]
\[
((\zeta,\eta),\gamma_{0},\psi_{0})\mapsto((t-\exp(-\eta\circ\psi_{0}\circ\eta^{\dag}))\circ(1_{V_{0}}-\zeta\zeta^{\dag})\oplus(\rho(\gamma_{0}\circ\zeta^{\dag})+t)\circ\zeta\zeta^{\dag},\gamma_{0}\circ\zeta^{\dag})
\]
which is continuous as standard.
\end{proof}

There is a unique map $\chi'$ making
\[
\xymatrix{\frac{\tilde{X}_{k}}{Y_{k}}\ar[r]_{\cong}^{\mathfrak{q}_{k}}\ar[d]_{\varpi_{k}}&\mathcal{I}_{k}\ar[d]^{\chi'}\\
\frac{\tilde{X}_{k-1}}{Y_{k}}\ar[r]^{\cong}_{\mathfrak{f}_{k}}&\Sigma\mathcal{J}_{k}}
\]
strictly commute. We have a sequence
\[
\mathcal{I}_{k}\overset{\chi_{k}}{\to}\Sigma \mathcal{J}_{k}\overset{-\Sigma i_{k}}{\to} G_{k}(V_{0})^{\Real\oplus\Hom(T,V_{1})\oplus s(T^{\bot})}\overset{-\Sigma p_{k}}{\to} \Sigma \mathcal{I}_{k}
\]
with $p_{k}$ induced from the fibrewise collapses $S^{\Hom(W,V_{1})}\to \inj(W,V_{1})_{\infty}$. The map $i_{k}$ is induced from the fibrewise inclusions $\inj(W,V_{1})^{c}_{\infty}\to S^{\Hom(W,V_{1})}$ and $\chi_{k}$ is induced from the fibrewise maps $\chi_{W}=\mathfrak{B}_{g}\co\inj(W,V_{1})_{\infty}\to \Sigma\inj(W,V_{1})^{c}_{\infty}$; here $g$ is the map
\begin{align*}
g\co\frac{D_{+}(k)}{D_{0}(k)}&\to \Sigma D_{0}(k)\\
(t_{0},\ldots,t_{k-1})&\mapsto (\log(t_{0}),0,\log(t_{1})-\log(t_{0}),\ldots,\log(t_{k-1})-log(t_{0})).
\end{align*}
It follows from \ref{cofibresequenceandbundles} that this is a cofibre sequence. It is a simple task to check that $\chi'=\chi_{k}$ and hence we have a cofibre sequence
\[
\frac{\tilde{X}_{k}}{Y_{k}}\overset{\varpi}{\to}\frac{\tilde{X}_{k-1}}{Y_{k}}\to G_{k}(V_{0})^{\Real\oplus \Hom(T,V_{1})\oplus s(T^{\bot})}\to \Sigma \frac{\tilde{X}_{k}}{Y_{k}}.
\]
Thus by applying \ref{quotientofacofibresequence}, applying $\Sigma^{\infty}$ and smashing by $S^{-\svo}$ we observe that a cofibre sequence
\[
X_{k}\overset{\pi_{k}}{\to}X_{k-1}\to G_{k}(V_{0})^{\Real\oplus\Hom(T,V_{1}-V_{0})\oplus s(T)}\to \Sigma X_{k}
\]
exists. The above work is easily checked to interact well with the stated group actions. Theorem~\ref{TheMainResult} then follows.
 
\subsection{Explicit maps in the sequences}\label{Explicit maps in the sequence}

While the above work proves Theorem~\ref{TheMainResult} it is somewhat unsatisfactory as it only theoretically demonstrates that there is a cofibre sequence. We now state maps forming a sequence
\[
\tilde{X}_{k}\overset{\tilde{\pi}_{k}}{\to}\tilde{X}_{k-1}\overset{\tilde{\delta}_{k}}{\to} G_{k}(V_{0})^{\Real\oplus\Hom(T,V_{1})\oplus s(T^{\bot})}\overset{-\Sigma\tilde{\phi}_{k}}{\to} \Sigma \tilde{X}_{k}.
\]
This will be the unstable sequence building the sequence in Theorem~\ref{TheMainResult}. We only need to do this when $k<d_{0}$---at the top of the tower we didn't take a quotient so already have explicit unstable maps.

\begin{definition}\label{themapdeltak} Define
\[
\tilde{\delta}_{k}\co\tilde{X}_{k-1}\overset{\operatorname{coll}}{\to}\frac{\tilde{X}_{k-1}}{Y_{k}}\overset{(\mathfrak{f}_{k})_{\infty}}{\cong}\Sigma\mathcal{J}_{k}\overset{-\Sigma i_{k}}{\to} G_{k}(V_{0})^{\Real\oplus \Hom(T,V_{1})\oplus s(T^{\bot})}
\]
and set $\delta_{k}:=\Sigma^{-\svo}\Sigma^{\infty}\tilde{\delta}_{k}$. Here $\operatorname{coll}$ is the standard collapse, $\mathfrak{f}_{k}$ was defined in \ref{sigmainjchomeoextended} and $i_{k}$ is the fibrewise inclusion.
\end{definition}

\begin{definition}\label{themapphik} Define
\begin{align*}
\tilde{\phi}_{k}\co G_{k}(V_{0})^{\Hom(T,V_{1})\oplus s(T^{\bot})}&\to \tilde{X}_{k}\\
(W,\gamma,\psi)&\mapsto (\psi|_{W^{\bot}}\oplus (\rho(\gamma)+e_{top}(\psi))|_{W},-\sigma(\gamma))
\end{align*}
and set $\phi_{k}:=\Sigma^{-\svo}\Sigma^{\infty}\tilde{\phi}_{k}$. Here $e_{top}(\psi)$ is the top eigenvalue of $\psi$ under the standard ordering and we recall $\rho$ and $\sigma$ from \ref{themaprho} and \ref{themapsigma}.
\end{definition}

We have already covered why $\tilde{\delta}_{k}$ is well-defined and continuous, it is a simple exercise to check that $\tilde{\phi}_{k}$ is also well-defined (i.e.\ the unbased map is proper) and continuous using similar techniques to those used in \ref{thetaEalphaextended} and \ref{sigmainjchomeoextended}. Recall $p_{k}\co G_{k}(V_{0})^{\Hom(T,V_{1})\oplus s(T^{\bot})}\to \mathcal{I}_{k}$ to be the fibrewise collapse, set $c_{k}\co \tilde{X}_{k}\to \tilde{X}_{k}/Y_{k}$ to be the collapse and recall $\mathfrak{r}_{k}$ from \ref{thetaEalphaextended}. We have a homotopy commutative diagram
\[
\xymatrix{Y_{k}\ar@{>->}[r]\ar[d]_{1}&\tilde{X}_{k}\ar[d]_{\tilde{\pi}_{k}}\ar@{->>}[r]_{c_{k}}&\frac{\tilde{X}_{k}}{Y_{k}}\ar[d]&G_{k}(V_{0})^{\Hom(T,V_{1})\oplus s(T^{\bot})}\ar@/_1.5pc/@{.>}[ll]_{\tilde{\phi}_{k}?}\ar[l]_{(\mathfrak{r}_{k})_{\infty}\circ p_{k}\phantom{xxxxxxxx}}\\
Y_{k}\ar@{ >->}[r]&\tilde{X}_{k-1}\ar@/_2.5pc/[rru]|\bigcirc_{\phantom{xx}\tilde{\delta_{k}}}\ar@{->>}[r]&\frac{\tilde{X}_{k-1}}{Y_{k}}\ar[ur]|\bigcirc}
\]
where the inner triangle is our cofibre sequence. If we can show adding $\tilde{\phi}_{k}$ to the diagram maintains its homotopy-commutativity then we conclude that
\[
\tilde{X}_{k}\overset{\tilde{\pi}_{k}}{\to}\tilde{X}_{k-1}\overset{\tilde{\delta}_{k}}{\to} G_{k}(V_{0})^{\Real\oplus\Hom(T,V_{1})\oplus s(T^{\bot})}\overset{-\Sigma\tilde{\phi}_{k}}{\to} \Sigma \tilde{X}_{k}
\]
is a cofibre sequence. To proceed we extend our functional calculus theory from Section~\ref{Extended functional calculus}. We have a homeomorphism
\begin{align*}
f\co D(d_{0}-k)\wedge D_{+}(k)&\to D(d_{0})\\
(s,t)&\mapsto (s,s_{top}+t).
\end{align*}

\begin{definition}\label{ExtendedFacialDefinition} We say a map $g\co D(d_{0}-k)\wedge D_{+}(k)\to D(d_{0})$ is facial if the composite $g\circ f^{-1}\co D(d_{0})\to D(d_{0})$ is. In particular $f$ is facial.
\end{definition}

The next two results are easy to check.

\begin{lemma}\label{themapp} Let $i\co\Complex^{k}\to \Complex ^{d_{0}}$ be a choice of inclusion sending $\Complex^{k}$ to the last $k$ copies of $\Complex$ in $\Complex^{d_{0}}$. Recall $\tilde{Z}_{k}$ from \ref{Zk} and $\Delta$ from \ref{DiagonalMatrixMap} and define
\begin{align*}
p'\co\mathcal{L}(\Complex^{d_{0}},V_{0})\times\mathcal{L}(\Complex^{k},V_{1})\times  D'(d_{0}-k)\times D'_{+}(k)&\to \tilde{Z}_{k}\\
(\lambda,\mu,s,t)&\mapsto (\lambda(i(\Complex^{k})),-\mu\circ \Delta(t)\lambda^{-1}|_{\lambda(i(\Complex^{k}))},\lambda\Delta(s)\lambda^{-1}|_{\lambda(i(\Complex^{k}))^{\bot}}).
\end{align*}
Then $p'$ is a continuous proper surjection and hence $p:=(p')_{\infty}$ is a quotient map.
\end{lemma}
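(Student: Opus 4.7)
The plan is to verify in turn each of the three claims---continuity, surjectivity, and properness---and then deduce that $p$ is a quotient by invoking Lemma~\ref{quotientandsubspacearethesame}. Continuity is the cheapest step: $p'$ is built from composition, restriction to a subspace that depends continuously on $\lambda$, and the continuous diagonal-matrix map $\Delta$, so continuity will follow from routine bookkeeping as in previous maps in the section.

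For surjectivity, given $(W,\gamma,\psi)\in \tilde{Z}_{k}$, I would construct $(\lambda,\mu,s,t)$ in two independent halves, exploiting the splitting $\Complex^{d_{0}}=i(\Complex^{k})^{\bot}\oplus i(\Complex^{k})$. First, diagonalize $\psi\in s(W^{\bot})$ via an orthonormal eigenbasis: this produces an isometry $\lambda_{0}\co\Complex^{d_{0}-k}\to W^{\bot}$ and eigenvalues $s\in D'(d_{0}-k)$ with $\psi\circ\lambda_{0}=\lambda_{0}\circ\Delta(s)$. Second, diagonalize $\gamma^{\dag}\gamma\in s_{+}(W)$ via an orthonormal eigenbasis, giving an isometry $\lambda_{1}\co\Complex^{k}\to W$ and eigenvalues $t\in D'_{+}(k)$ with $\rho(\gamma)\circ\lambda_{1}=\lambda_{1}\circ\Delta(t)$. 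Assemble $\lambda:=\lambda_{0}\oplus\lambda_{1}\in\mathcal{L}(\Complex^{d_{0}},V_{0})$, which satisfies $\lambda(i(\Complex^{k}))=W$ by construction. Finally, use the polar-type decomposition $\gamma=\sigma(\gamma)\circ\rho(\gamma)$ from Lemma~\ref{themapsigma} and extend the isometry $\sigma(\gamma)\co\Ker(\gamma)^{\bot}\to V_{1}$ arbitrarily to an isometry $\tilde{\sigma}\co W\to V_{1}$ (possible since $\Dim(W)=k\leqslant d_{0}\leqslant \Dim(V_{1})$), then set $\mu:=-\tilde{\sigma}\circ\lambda_{1}\in\mathcal{L}(\Complex^{k},V_{1})$. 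A direct check using $\rho(\gamma)\circ\lambda_{1}=\lambda_{1}\circ\Delta(t)$ shows $p'(\lambda,\mu,s,t)=(W,\gamma,\psi)$.

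For properness, since the linear-isometry factors are compact and the target is Hausdorff, it suffices to control the non-compact factors $s$ and $t$ on preimages of bounded sets. Because $\lambda$ and $\mu$ are isometries, the operator norms satisfy $\|\psi\|=\|\Delta(s)\|=\max_{i}|s_{i}|$ and $\|\gamma\|=\|\Delta(t)\|=\max_{i}|t_{i}|=t_{k-1}$, so boundedness of $(\psi,\gamma)$ in the target forces boundedness of $(s,t)$ in $D'(d_{0}-k)\times D'_{+}(k)$; combined with the compactness of the isometry factors and standard closedness arguments this yields properness. Once $p'$ is established as a continuous proper surjection between locally compact Hausdorff spaces, Lemma~\ref{quotientandsubspacearethesame} gives immediately that $p=(p')_{\infty}$ is a quotient map.

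The main obstacle is surjectivity, and specifically the coordination of the two diagonalization choices with the constraint that $\lambda$ must be a single isometry of $\Complex^{d_{0}}$ with $\lambda(i(\Complex^{k}))=W$. The key observation unlocking this---that $\lambda$ may be chosen as a direct sum $\lambda_{0}\oplus\lambda_{1}$ under the standard splitting of $\Complex^{d_{0}}$, so the two halves are independent---makes it a matter of separately diagonalizing $\psi$ on $W^{\bot}$ and $\rho(\gamma)$ on $W$. The only subtlety lies in the non-injective case for $\gamma$, handled by extending $\sigma(\gamma)$ over $\Ker(\gamma)$, which is harmless since there is enough room in $V_{1}$.
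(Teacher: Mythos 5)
Your proof is correct, and it supplies exactly the argument the paper omits (the paper dismisses this lemma with ``the next two results are easy to check''): the splitting $\lambda=\lambda_{0}\oplus\lambda_{1}$ adapted to $W^{\bot}\oplus W$, the polar decomposition $\gamma=\sigma(\gamma)\circ\rho(\gamma)$ with an arbitrary isometric extension over $\Ker(\gamma)$, and the norm bounds $\|\psi\|=\max_{i}|s_{i}|$, $\|\gamma\|=t_{k-1}$ for properness are all the intended steps. For continuity one can make the ``routine bookkeeping'' completely painless by observing that $p'$ factors through the quotient description of $\tilde{Z}_{k}$ in Lemma~\ref{topologizingthefibrebundles} via $(\lambda,\mu,s,t)\mapsto((\lambda\circ i,\lambda|_{i(\Complex^{k})^{\bot}}),-\mu\circ\Delta(t),\Delta(s))$, but this is a presentational point only.
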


\begin{lemma}\label{themapq} Define
\begin{align*}
q'\co \mathcal{L}(\Complex^{d_{0}},V_{0})\times\mathcal{L}(\Complex^{k},V_{1})\times D'(d_{0})&\to \tilde{X}'_{k}\\
(\lambda,\mu,t')&\mapsto (\lambda\Delta(t')\lambda^{-1},-\mu\circ \lambda^{-1}|_{P_{k}(\lambda\Delta(t')\lambda^{-1})}).
\end{align*}
Then $q'$ is a continuous proper surjection and hence $q:=(q')_{\infty}$ is a quotient map.
\end{lemma}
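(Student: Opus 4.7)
The plan is to verify that $q'$ is well-defined, continuous, proper and surjective in turn, and then to invoke standard point-set topology: a proper continuous surjection between locally compact Hausdorff spaces extends to a continuous surjection of one-point compactifications which, being a closed map of compact Hausdorff spaces, is automatically a quotient. Local compactness of $\tilde{X}_{k}'$ is immediate since it sits as a closed subspace of $\svo\times\aich$ via the equivalent topology of Lemma~\ref{Xkothertopology}.

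For well-definedness, setting $\alpha:=\lambda\Delta(t')\lambda^{-1}$ gives a self-adjoint endomorphism with orthonormal eigenbasis $\lambda(e_{0}),\ldots,\lambda(e_{d_{0}-1})$ and eigenvalues $t'_{0}\leqslant\ldots\leqslant t'_{d_{0}-1}$. Hence $P_{k}(\alpha)$ is the span of the eigenvectors for the top distinct eigenvalues, which is always contained in $\lambda(i(\Complex^{k}))$, with equality when $t'_{d_{0}-k-1}<t'_{d_{0}-k}$. Thus $\lambda^{-1}|_{P_{k}(\alpha)}$ takes values in $i(\Complex^{k})$ and $-\mu\circ\lambda^{-1}|_{P_{k}(\alpha)}$ is a linear isometry $P_{k}(\alpha)\to V_{1}$, as required. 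For continuity I would pass to the subspace topology of Lemma~\ref{Xkothertopology}, under which $q'$ becomes
\[
(\lambda,\mu,t')\mapsto \bigl(\lambda\Delta(t')\lambda^{-1},\ \mu\circ i^{\dag}\circ\lambda^{-1}\circ\lambda_{k}(\lambda\Delta(t')\lambda^{-1})\bigr),
\]
where $i^{\dag}\co\Complex^{d_{0}}\to\Complex^{k}$ is the adjoint of the inclusion $i$. That this pair lies in $\tilde{X}_{k}'$ amounts to the identity $\rho(\beta)=\lambda_{k}(\alpha)$, which holds because $\lambda_{k}(\alpha)(V_{0})=P_{k}(\alpha)\subseteq\lambda(i(\Complex^{k}))$ and $\mu$ is an isometry; continuity of the formula itself is immediate, as every factor, including $\lambda_{k}$ by Lemma~\ref{themaplambda}, is continuous.

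Properness reduces to bounding the $D'(d_{0})$--coordinate given a bound on the image, since $\mathcal{L}(\Complex^{d_{0}},V_{0})$ and $\mathcal{L}(\Complex^{k},V_{1})$ are already compact; this bound is immediate from $\|\alpha\|=\max_{j}|t'_{j}|$ together with the ordering on $D'(d_{0})$. For surjectivity, given $(\alpha,\theta)\in\tilde{X}_{k}'$, diagonalize $\alpha$ by choosing any orthonormal eigenbasis with eigenvalues in nondecreasing order to manufacture $\lambda$ and $t'$, then take $\mu\co\Complex^{k}\to V_{1}$ to be any isometric extension of $-\theta\circ\lambda|_{\lambda^{-1}(P_{k}(\alpha))}\co \lambda^{-1}(P_{k}(\alpha))\to V_{1}$; such an extension exists because $\Dim(V_{1})\geqslant d_{0}\geqslant k$.

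The main obstacle is the degenerate locus $t'_{d_{0}-k-1}=t'_{d_{0}-k}$, on which $\Dim P_{k}(\alpha)<k$. There the eigenbasis is not uniquely determined by $\alpha$ and $\mu$ is only constrained on the proper subspace $\lambda^{-1}(P_{k}(\alpha))\subsetneq i(\Complex^{k})$, so some care is needed to ensure that the surjectivity construction goes through consistently; the key observation is that the ambiguity in the eigenbasis is exactly absorbed by the remaining freedom in extending $\mu$ outside $\lambda^{-1}(P_{k}(\alpha))$, and that the subspace-topology formula above continues to land in $\tilde{X}_{k}'$ uniformly across this locus.
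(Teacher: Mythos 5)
Your argument is correct and fills in exactly the details the paper leaves to the reader ("easy to check"), in precisely the style the paper's Conventions prescribe: well-definedness via $P_k(\alpha)\subseteq\lambda(i(\Complex^k))$, continuity via the equivalent subspace topology of Lemma~\ref{Xkothertopology}, properness by bounding $t'$ through $\|\alpha\|=\max_j|t'_j|$ with the other factors compact, and the standard passage to one-point compactifications. Your closing worry about the degenerate locus is unnecessary for surjectivity (one only needs \emph{some} preimage, not a consistent choice), but it introduces no error.
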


\begin{proposition}\label{fnalvarC} Let $g\co D(d_{0}-k)\wedge D_{+}(k)\to D(d_{0})$ be facial. Then there exists a unique map 
\[
\mathfrak{C}_{g}\co G_{k}(V_{0})^{\Hom(T,V_{1})\oplus s(T^{\bot})}\to \tilde{X}_{k}
\]
making
\[
\xymatrix{\mathcal{L}(\Complex^{d_{0}},V_{0})_{\infty}\wedge\mathcal{L}(\Complex^{k},V_{1})_{\infty}\wedge D(d_{0}-k)\wedge D_{+}(k)\ar[d]_{1\wedge1\wedge g}\ar[r]^{\phantom{xxxxxxxxxxxx}p}&G_{k}(V_{0})^{\Hom(T,V_{1})\oplus s(T^{\bot})}\ar[d]^{\mathfrak{C}_{g}}\\
\mathcal{L}(\Complex^{d_{0}},V_{0})_{\infty}\wedge\mathcal{L}(\Complex^{k},V_{1})_{\infty}\wedge D(d_{0})\ar[r]_{\phantom{xxxxxxxxxxxx}q}&\tilde{X}_{k}}
\]
commute; moreover, the associated map
\begin{align*}
\mathfrak{C}\co\FMap(D(d_{0}-k)\wedge D_{+}(k),D(d_{0}))&\to \Map(G_{k}(V_{0})^{\Hom(T,V_{1})\oplus s(T^{\bot})}, \tilde{X}_{k})\\
g&\mapsto \mathfrak{C}_{g}
\end{align*}
is continuous. Furthermore we have an explicit description of $\mathfrak{C}_{g}(W,\gamma,\psi)$. Choose an orthonormal basis for $W^{\bot}$ of eigenvectors $v_{0},\ldots,v_{d_{0}-k-1}$ of $\psi$ with eigenvalues $e_{0}\leqslant\ldots\leqslant e_{d_{0}-k-1}$ and choose and orthonormal basis for $W$ of eigenvectors $v_{d_{0}-k},\ldots,v_{d_{0}-1}$ of $\gamma^{\dag}\gamma$ with eigenvalues
$e_{d_{0}-k}^{2}\leqslant\ldots\leqslant e_{d_{0}-1}^{2}$. If $g(e)=s$, then $\mathfrak{C}_{g}(W,\gamma,\psi)$ maps to $(\alpha,\theta)$ where $\alpha$ is the self-adjoint transformation of $V_{0}$ with eigenvectors $v_{i}$ and eigenvalues $s_{i}$ and $\theta= -\sigma(\gamma)$, recalling $\sigma$ from \ref{themapsigma}.
\end{proposition}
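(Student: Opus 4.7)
The plan is to follow closely the template of the proofs of Propositions~\ref{fnalvarA} and \ref{fnalvarB}. There are three substantive points: well-definedness (together with uniqueness) of $\mathfrak{C}_g$, continuity of $\mathfrak{C}_g$ for fixed $g$, and continuity of the assembly map $\mathfrak{C}$.

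For well-definedness, the key step is to verify that $q\circ(1\wedge 1\wedge g)$ descends through the quotient $p$. Two points $(\lambda,\mu,s,t)$ and $(\lambda',\mu',s',t')$ lie in the same fiber of $p$ exactly when, after identifying $W = \lambda(i(\Complex^k)) = \lambda'(i(\Complex^k))$, the composite $\lambda^{-1}\lambda'$ centralizes the block-diagonal action of $\Delta(s)$ on $i(\Complex^k)^{\perp}$ and $\Delta(t)$ on $i(\Complex^k)$, with $\mu$ and $\mu'$ adjusted correspondingly. Faciality of $g$, exactly as in the proof of \ref{fnalvarA}, ensures that eigenvalue coincidences in the combined spectrum $(s,t)$ persist in $g(s,t)$, so the centralizer of $\Delta(g(s,t))$ contains the centralizer of $\Delta(s)\oplus\Delta(t)$. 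This containment is precisely what is needed to force $q(\lambda,\mu,g(s,t)) = q(\lambda',\mu',g(s',t'))$. Uniqueness of $\mathfrak{C}_g$ is immediate from surjectivity of $p$, and continuity follows because $p$ is a quotient and $q\circ(1\wedge 1\wedge g)$ is continuous.

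Continuity of $\mathfrak{C}$ is established by the same adjunction trick used in the earlier propositions. One introduces an evaluation map from the smash of $\FMap(D(d_0{-}k)\wedge D_+(k),D(d_0))$ with the top-left object of the diagram into the bottom-left object, sending $(g,\lambda,\mu,s,t)\mapsto (\lambda,\mu,g(s,t))$. Postcomposing with $q$ exhibits $\mathfrak{C}^{\#}\circ(1\wedge p)$ as a composite of continuous maps; since $1\wedge p$ is a quotient, $\mathfrak{C}^{\#}$ and hence $\mathfrak{C}$ are continuous. The explicit formula for $\mathfrak{C}_g(W,\gamma,\psi)$ is then extracted by chasing a specific preimage through the diagram: take $\lambda$ with columns $v_0,\ldots,v_{d_0-1}$, let $\mu$ be the isometry sending the standard basis of $\Complex^k$ to the orthonormal vectors $m_i\in V_1$, and let $(s,t)$ record the eigenvalues $e_0,\ldots,e_{d_0-k-1}$ of $\psi$ together with $e_{d_0-k},\ldots,e_{d_0-1}$ arising from $\gamma^\dag\gamma$.

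The main obstacle will be the centralizer/faciality verification in the well-definedness argument. Because $p$ involves $\lambda$ acting in two different ways on its two factors---conjugation on the $W^{\perp}$ block and composition with $\mu$ on the $W$ block---extracting a clean centralizer statement from membership in a single fiber of $p$ requires some careful bookkeeping before the standard faciality argument of \ref{fnalvarA} can be invoked.
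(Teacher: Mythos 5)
Your outline follows the paper's proof step for step: well-definedness of $\mathfrak{C}_{g}$ by analysing the fibres of $p$, uniqueness from surjectivity of $p$, continuity of $\mathfrak{C}_{g}$ because $p$ is a quotient, continuity of $\mathfrak{C}$ by the same adjunction-plus-evaluation device as in \ref{fnalvarA} and \ref{fnalvarB}, and the explicit formula by chasing a chosen preimage. All of that is right.

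There is, however, one genuine gap in the well-definedness step, and you have located the difficulty in the wrong place. The double role of $\lambda$ is not the problem: setting $\zeta=\lambda^{-1}\lambda'$ one finds $s=s'$, $t=t'$ and that $\zeta$ splits as $\zeta_{0}\oplus\zeta_{1}$ centralizing $\Delta(t)$ and $\Delta(s)$, so faciality gives that $\zeta$ centralizes $\Delta(g(s,t))$ and the first components of $q$ agree, exactly as in \ref{fnalvarA}. The real issue is that a fibre of $p$ does \emph{not} determine $\mu$ and $\mu'$ up to this centralizer: the fibre condition constrains $\mu$ only through the composite $\mu\circ\Delta(t)$, so it identifies $\mu^{-1}\mu'$ with $\zeta_{0}$ only on the coordinates of $\Complex^{k}$ where $\Delta(t)$ is invertible and leaves $\mu$, $\mu'$ completely unrelated on $\Ker(\Delta(t))$. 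Hence ``with $\mu$ and $\mu'$ adjusted correspondingly'' misdescribes the fibre, and the centralizer containment is \emph{not} by itself ``precisely what is needed'': it does not force the second components $-\mu\circ\lambda^{-1}|_{P_{k}(\cdot)}$ and $-\mu'\circ\lambda'^{-1}|_{P_{k}(\cdot)}$ to coincide. What rescues the argument is a second, $\mathfrak{B}$-style use of faciality: an entry $t_{j}=0$ corresponds under $(s,t)\mapsto(s,s_{top}+t)$ to a coincidence with the $(d_{0}-k-1)$-st coordinate, this coincidence persists in $g(s,t)$, and therefore the corresponding eigenvector of $\lambda\Delta(g(s,t))\lambda^{-1}$ is excluded from $P_{k}$; the restriction to $P_{k}$ in the definition of $q$ consequently never sees $\mu$ on $\Ker(\Delta(t))$. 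This is the analogue here of the observation $\Ker(\alpha)\subseteq\Ker(\mathfrak{A}_{f}(\alpha))$ used in \ref{fnalvarB}, and without it the well-definedness claim as you have written it does not go through.
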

\begin{proof} Let $p(\lambda,\mu,s,t)=p(\lambda',\mu',s',t')=(W,\gamma,\psi)$, we claim that $q(\lambda,\mu,g(s,t))=q(\lambda',\mu',g(s',t'))$. Set $\zeta:=\lambda^{-1}\lambda'\co\Complex^{d_{0}}\to \Complex^{d_{0}}$. Then as $\lambda(i(\Complex^{k}))=\lambda'(i(\Complex^{k}))$ we have $\zeta(i(\Complex^{k}))=i(\Complex^{k})$ and thus $\zeta$ splits into $\zeta_{0}\co\Complex^{k}\to \Complex^{k}$ and $\zeta_{1}\co\Complex^{d_{0}-k}\to \Complex^{d_{0}-k}$.  Suppressing the notation of $i$ we have $\rho(\gamma)=\lambda\Delta(t)\lambda^{-1}|_{W}=\lambda'\Delta(t')\lambda'^{-1}|_{W}$. This implies
$\zeta_{0}^{-1}\Delta(t)\zeta_{0}=\Delta(t')$, hence $t=t'$ and $\zeta_{0}$ is in the centralizer of $\Delta(t)$. Similarly $s=s'$ and $\zeta_{1}$ is in the centralizer of $\Delta(s)$. Now, let $\Delta(s\oplus t)$ denote the diagonal matrix with the entries $s$ and then $t$. It is clear that $\zeta$ is in the centralizer of $\Delta(s\oplus t)$ and hence in the centralizer of $\Delta(g(s,t))$.

We want to show that $-\mu\circ \lambda^{-1}|_{P_{k}(\lambda\Delta(g(s,t))\lambda^{-1})}=-\mu'\circ \lambda'^{-1}|_{P_{k}(\lambda'\Delta(g(s',t'))\lambda'^{-1})}$ so consider $\mu^{-1}\mu'$ restricted to the top $n$ copies of $\Complex$ in $\Complex^{k}$; $n$ is the maximal value such that $\Delta(t)|_{\Complex^{n}}$ is invertible. On this restriction it is clear that $\mu^{-1}\mu'$ agrees with $\zeta_{0}$. This fact, combined with the above paragraph, is enough to show that $q(\lambda,\mu,g(s,t))=q(\lambda',\mu',g(s',t'))$. Further our described map clearly makes the diagram commute. As $p$ is surjective $\mathfrak{C}_{g}$ is unique. Similar to the proofs of \ref{fnalvarA} and \ref{fnalvarB} the map $\mathfrak{C}_{g}$ is continuous as $p$ is a quotient.

To show $\mathfrak{C}$ is continuous we show that the adjoint $\mathfrak{C}^{\#}$ is continuous. Use the shorthand $\mathcal{L}:=\mathcal{L}(\Complex^{d_{0}},V_{0})_{\infty}\wedge\mathcal{L}(\Complex^{k},V_{1})_{\infty}$. Let $\text{eval}$ be the map
\begin{align*}
\text{eval}\co\FMap (D(d_{0}-k)\wedge D_{+}(k),D(d_{0}))\wedge\mathcal{L}\wedge D(d_{0}-k)\wedge D_{+}(k)&\to\mathcal{L}\wedge D(d_{0})\\
(g,\lambda,\mu,s,t)&\mapsto(\lambda,\mu,g(s,t)).
\end{align*}
We have a commutative diagram
\[
\xymatrix{\FMap (D(d_{0}-k)\wedge D_{+}(k),D(d_{0}))\wedge\mathcal{L}\wedge D(d_{0}-k)\wedge D_{+}(k)\ar[r]^{\phantom{xxxxxxxxxxxxxxxxxxxxxx}\text{eval}}\ar[d]_{1\wedge p}&\mathcal{L}\wedge D(d_{0})\ar[d]^{q}\\
\FMap (D(d_{0}-k)\wedge
D_{+}(k),D(d_{0}))\wedge G_{k}(V_{0})^{\Hom(T,V_{1})\oplus s(T^{\bot})}\ar[r]_{\phantom{xxxxxxxxxxxxxxxxxxxxxx}\mathfrak{C}^{\#}}&\tilde{X}_{k}}
\]
and hence $\mathfrak{C}^{\#}\circ (1\wedge p)=q\circ\text{eval}$ is continuous. The map $(1\wedge p)$ is a quotient, thus $\mathfrak{C}^{\#}$ and $\mathfrak{C}$ are continuous.
\end{proof}

We can immediately note the following fact.

\begin{lemma}\label{phikisfnalcalc} $\tilde{\phi}_{k}=\mathfrak{C}_{f}$ for the particular map $f$ defined immediately before \ref{ExtendedFacialDefinition}.
\end{lemma}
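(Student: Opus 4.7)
The plan is to read off both sides from the explicit description of $\mathfrak{C}_{f}$ given at the end of Proposition~\ref{fnalvarC} and verify they agree pointwise; this is essentially bookkeeping once the eigenvalue data is lined up.

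First I would check that the map $f\co D(d_{0}-k)\wedge D_{+}(k)\to D(d_{0})$, $(s,t)\mapsto(s,s_{\text{top}}+t)$, is facial in the sense of Definition~\ref{ExtendedFacialDefinition}. This is essentially tautological: by construction $f$ is the homeomorphism used to define facial maps out of the smash product, so $f\circ f^{-1}=1_{D(d_{0})}$ is facial, and therefore $f$ itself is facial. This lets us invoke Proposition~\ref{fnalvarC}.

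Next, given $(W,\gamma,\psi)\in G_{k}(V_{0})^{\Hom(T,V_{1})\oplus s(T^{\bot})}$, choose orthonormal eigenbases exactly as prescribed in the proposition: $v_{0},\ldots,v_{d_{0}-k-1}$ of $\psi$ on $W^{\bot}$ with eigenvalues $e_{0}\leqslant\cdots\leqslant e_{d_{0}-k-1}$, and $v_{d_{0}-k},\ldots,v_{d_{0}-1}$ of $\gamma^{\dag}\gamma$ on $W$ with eigenvalues $e_{d_{0}-k}^{2}\leqslant\cdots\leqslant e_{d_{0}-1}^{2}$. Then I would compute
\[
f(e_{0},\ldots,e_{d_{0}-k-1})\wedge(e_{d_{0}-k},\ldots,e_{d_{0}-1}) = (e_{0},\ldots,e_{d_{0}-k-1},\,e_{d_{0}-k-1}+e_{d_{0}-k},\ldots,e_{d_{0}-k-1}+e_{d_{0}-1}),
\]
using that $s_{\text{top}}=e_{d_{0}-k-1}=e_{\text{top}}(\psi)$. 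According to the explicit description in Proposition~\ref{fnalvarC}, this says $\mathfrak{C}_{f}(W,\gamma,\psi)=(\alpha,-\sigma(\gamma))$ where $\alpha$ has eigenvectors $v_{i}$ with the above eigenvalues.

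Finally I would split $\alpha$ along $V_{0}=W^{\bot}\oplus W$. On $W^{\bot}$ the eigenbasis $v_{0},\ldots,v_{d_{0}-k-1}$ and eigenvalues $e_{0},\ldots,e_{d_{0}-k-1}$ exactly describe $\psi|_{W^{\bot}}$. On $W$ the eigenbasis $v_{d_{0}-k},\ldots,v_{d_{0}-1}$ and eigenvalues $e_{d_{0}-k-1}+e_{i}$ describe $\rho(\gamma)|_{W}+e_{\text{top}}(\psi)\cdot 1_{W}$. Hence $\alpha=\psi|_{W^{\bot}}\oplus(\rho(\gamma)+e_{\text{top}}(\psi))|_{W}$, and combining with the second coordinate $-\sigma(\gamma)$ we recover precisely the formula of Definition~\ref{themapphik}. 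There is no real obstacle here; the only thing to be careful about is the index shift matching $s_{\text{top}}$ in the definition of $f$ with $e_{\text{top}}(\psi)$ in the explicit description, which is exactly why $f$ was chosen with that form.
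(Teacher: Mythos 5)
Your verification is correct and is exactly the check the paper intends: the paper offers no proof beyond calling the lemma easy to check, and the intended argument is precisely to feed the eigenvalue data $(e_{0},\ldots,e_{d_{0}-k-1})\wedge(e_{d_{0}-k},\ldots,e_{d_{0}-1})$ through $f$ and compare the output of the explicit description in Proposition~\ref{fnalvarC} with the formula in Definition~\ref{themapphik}. Your identification of $s_{\text{top}}$ with $e_{top}(\psi)$ and the resulting splitting of $\alpha$ along $V_{0}=W^{\bot}\oplus W$ is exactly right, so nothing further is needed.
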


We can extend the functional calculus to the following result.

\begin{corollary}\label{fnalvarD}  Recalling $\mathcal{I}_{k}'$ from \ref{thetaEalphaextended}, the maps $p'$ and $q'$ restrict to
\[
p'\co\mathcal{L}(\Complex^{d_{0}},V_{0})\times\mathcal{L}(\Complex^{k},V_{1})\times D'(d_{0}-k)\times D'_{+}(k)\backslash D_{0}'(k)\to \mathcal{I}_{k}'
\]
\[
q'\co\mathcal{L}(\Complex^{d_{0}},V_{0})\times\mathcal{L}(\Complex^{k},V_{1})\times (D'(d_{0})\backslash F_{d_{0}-k-1}(D'(d_{0})))\to \tilde{X}'_{k}\backslash Y'_{k}.
\]
Let $g'\co D(d_{0}-k)\wedge D_{+}(k)/D_{0}(k)\to D(d_{0})/F_{d_{0}-k-1}(D(d_{0}))$ be facial.
There exists a unique map $\mathfrak{D}_{g'}$ holding the properties stated in \ref{fnalvarC} and making
\[
\xymatrix{\mathcal{L}(\Complex^{d_{0}},V_{0})_{\infty}\wedge\mathcal{L}(\Complex^{k},V_{1})_{\infty}\wedge D(d_{0}-k)\wedge\frac{D_{+}(k)}{D_{0}(k)} \ar[d]_{1\wedge1\wedge g'}\ar[r]^{\phantom{xxxxxxxxxxxxxxxxxxxx}p}&\mathcal{I}_{k}\ar[d]^{\mathfrak{D}_{g'}}\\
\mathcal{L}(\Complex^{d_{0}},V_{0})_{\infty}\wedge\mathcal{L}(\Complex^{k},V_{1})_{\infty}\wedge
\frac{D(d_{0})}{F_{d_{0}-k-1}(D(d_{0}))}\ar[r]_{\phantom{xxxxxxxxxxxxxxxxxxxx}q}&\frac{\tilde{X}_{k}}{Y_{k}}}
\]
commute.
\end{corollary}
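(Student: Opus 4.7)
The plan is to follow exactly the pattern of Proposition~\ref{fnalvarC}, so the substantive work is confined to verifying that the restrictions of $p'$ and $q'$ remain proper surjections onto the claimed subspaces; once this is done, the descent construction of $\mathfrak{D}_{g'}$ and the two continuity arguments transfer almost verbatim.

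First I would check the restriction claims. For $q'$, the self-adjoint endomorphism $\alpha=\lambda\Delta(t')\lambda^{-1}$ has eigenvalues equal to the coordinates of $t'$ in order, so $\Dim(P_{k}(\alpha))=k$ is equivalent to $t'_{d_{0}-k-1}<t'_{d_{0}-k}$, i.e.\ $t'\notin F_{d_{0}-k-1}(D'(d_{0}))$; hence the image of the restricted $q'$ lies in $\tilde{X}'_{k}\setminus Y'_{k}$. For $p'$, if $t\notin D'_{0}(k)$ then all coordinates of $t$ are strictly positive, $\Delta(t)$ is invertible, and the composite $-\mu\circ\Delta(t)\lambda^{-1}|_{\lambda(i(\Complex^{k}))}$ is injective, so the image lies in $\mathcal{I}'_{k}$. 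Surjectivity of each restriction is obtained by diagonalising $(W,\gamma,\psi)$ or $(\alpha,\theta)$ exactly as in the unrestricted case, and properness is inherited because a compact subset contained in an open subspace has the same (compact) preimage whether viewed inside the open subspace or inside the ambient proper map. Hence the one-point compactifications $p$ and $q$ of the restricted maps are quotient maps.

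Next I would define $\mathfrak{D}_{g'}$ by declaring $\mathfrak{D}_{g'}\circ p=q\circ(1\wedge 1\wedge g')$. Well-definedness is exactly the centralizer argument from the proof of Proposition~\ref{fnalvarC}: if $p(\lambda,\mu,s,t)=p(\lambda',\mu',s',t')$, then $\zeta=\lambda^{-1}\lambda'$ splits as $\zeta_{0}\oplus\zeta_{1}$ centralising $\Delta(t)\oplus\Delta(s)$, and the facial condition on $g'$, interpreted via Definition~\ref{ExtendedFacialDefinition}, forces $\zeta$ to centralise $\Delta(g'(s,t))$ as well, so that together with the matching of the $\mu$--coordinates on the image of $\zeta_{0}$ one gets the required equality $q(\lambda,\mu,g'(s,t))=q(\lambda',\mu',g'(s',t'))$. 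Uniqueness of $\mathfrak{D}_{g'}$ is immediate from surjectivity of $p$, continuity follows from the quotient property of $p$ applied to the continuous composite $q\circ(1\wedge 1\wedge g')$, and the explicit eigenvector/eigenvalue description transfers word for word from Proposition~\ref{fnalvarC}.

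Finally, continuity of the assignment $g'\mapsto\mathfrak{D}_{g'}$ is the same adjunction argument deployed three times earlier: one forms the evaluation map out of the smash of $\FMap(D(d_{0}-k)\wedge D_{+}(k)/D_{0}(k),\,D(d_{0})/F_{d_{0}-k-1}(D(d_{0})))$ with the source of $p$, fits it into a commutative square with the adjoint $\mathfrak{D}^{\#}$ and the map $1\wedge p$, and concludes that $\mathfrak{D}^{\#}$ is continuous because $1\wedge p$ remains a quotient. The only genuine obstacle is the centralizer step in well-definedness, where one must check that the facial condition on $g'$ really is strong enough to force $\zeta$ to centralise the image diagonal matrix; this is exactly what the facial conventions of Section~\ref{Extended functional calculus}, and in particular Definition~\ref{ExtendedFacialDefinition}, are engineered to guarantee, so the corollary becomes a routine quotient-level variant of Proposition~\ref{fnalvarC}.
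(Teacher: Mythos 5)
Your proposal is correct and is essentially the argument the paper intends: the corollary is stated without proof precisely because it is the routine restriction of Proposition~\ref{fnalvarC}, and you have correctly identified and verified the only points needing care --- that the restricted $p'$ and $q'$ are still proper surjections onto $\mathcal{I}_k'$ and $\tilde{X}_k'\setminus Y_k'$ (via the equivalences $t\notin D_0'(k)\Leftrightarrow\Delta(t)$ invertible and $t'\notin F_{d_0-k-1}(D'(d_0))\Leftrightarrow\Dim(P_k(\alpha))=k$), after which the descent, uniqueness, continuity and adjunction steps transfer verbatim.
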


The next three results are simple to prove.

\begin{lemma}\label{collapseinfnalcalc} Let $h\co D(d_{0})\to D(d_{0})/F_{d_{0}-k-1}(D(d_{0}))$ be the collapse. Then
\[
\xymatrix{\mathcal{L}(\Complex^{d_{0}},V_{0})_{\infty}\wedge\mathcal{L}(\Complex^{k},V_{1})_{\infty}\wedge D(d_{0})\ar[r]^{\phantom{xxxxxxxxxxxx}q}\ar[d]_{1\wedge h}&\tilde{X}_{k}\ar[d]^{c_{k}}\\
\mathcal{L}(\Complex^{d_{0}},V_{0})_{\infty}\wedge\mathcal{L}(\Complex^{k},V_{1})_{\infty}\wedge \frac{D(d_{0})}{F_{d_{0}-k-1}(D(d_{0}))}\ar[r]_{\phantom{xxxxxxxxxxxxxxxxx}q}&\frac{\tilde{X}_{k}}{Y_{k}}
}
\]
commutes.
\end{lemma}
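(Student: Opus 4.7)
The plan is to verify commutativity by a direct diagram chase, with the one key observation being that the locus where $q$ lands in $Y_{k}$ corresponds exactly to the face $F_{d_{0}-k-1}(D(d_{0}))$ being collapsed by $h$.

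First I would unpack the condition defining $Y_{k}'$. By Lemma~\ref{themapq}, the element $q'(\lambda,\mu,t')$ has first coordinate $\alpha = \lambda\Delta(t')\lambda^{-1}$, whose eigenvalues are precisely the entries $t'_{0}\leqslant\ldots\leqslant t'_{d_{0}-1}$ of $t'$. From Definition~\ref{pkdefn}, $\dim P_{k}(\alpha) = k$ iff the eigenvalues $e_{d_{0}-k-1}(\alpha)$ and $e_{d_{0}-k}(\alpha)$ are distinct, i.e.\ iff $t'_{d_{0}-k-1} < t'_{d_{0}-k}$. Thus $q'(\lambda,\mu,t') \in Y_{k}'$ if and only if $t' \in F_{d_{0}-k-1}(D'(d_{0}))$, which is exactly the subspace collapsed by $h$.

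Next I would chase the diagram on a general smash factor $(\lambda,\mu,t')$. If $t'\notin F_{d_{0}-k-1}(D'(d_{0}))$, then $h$ acts as the identity on the class of $t'$, so both routes produce the class of $q(\lambda,\mu,t')$ in $\tilde{X}_{k}/Y_{k}$. If $t'\in F_{d_{0}-k-1}(D'(d_{0}))$, then the lower route sends it to the basepoint via $h$, while the upper route lands in $Y_{k}$ by the first paragraph and hence is collapsed to the basepoint by $c_{k}$. The basepoints of the smash factors and the point at infinity in $D(d_{0})$ are handled automatically since all the maps involved are based.

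There is essentially no obstacle here: the argument is a tautology once one matches Definition~\ref{pkdefn} with Definition~\ref{TheSpacesD}. Continuity of the induced maps is already built into the setup of $q$ and $h$ as quotient maps of one-point compactifications, so no further topology is needed; the lemma is purely a pointwise compatibility check.
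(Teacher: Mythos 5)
Your proposal is correct, and since the paper omits the proof of this lemma (``simple to prove''), your pointwise check is exactly the intended argument: the identification $\dim P_{k}(\lambda\Delta(t')\lambda^{-1})<k \iff t'_{d_{0}-k-1}=t'_{d_{0}-k}$ is the one substantive point, and it simultaneously justifies that the bottom map $q$ of Corollary~\ref{fnalvarD} is defined on the quotient and that the square commutes.
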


\begin{lemma}\label{theothercollapseinfnalcalc} Let $h'\co D(d_{0}-k)\wedge D_{+}(k)\to D(d_{0}-k) \wedge (D_{+}(k)/D_{0}(k))$ be the collapse. Then 
\[
\xymatrix{\mathcal{L}(\Complex^{d_{0}},V_{0})_{\infty}\wedge\mathcal{L}(\Complex^{k},V_{1})_{\infty}\wedge D(d_{0}-k)\wedge D_{+}(k)
\ar[d]_{1\wedge 1\wedge h'}\ar[r]^{\phantom{xxxxxxxxxxxx}p}&G_{k}(V_{0})^{\Hom(T,V_{1})\oplus s(T^{\bot})}\ar[d]^{p_{k}}\\
\mathcal{L}(\Complex^{d_{0}},V_{0})_{\infty}\wedge\mathcal{L}(\Complex^{k},V_{1})_{\infty}\wedge
D(d_{0}-k)\wedge
\frac{D_{+}(k)}{D_{0}(k)}\ar[r]_{\phantom{xxxxxxxxxxxx}p}&\mathcal{I}_{k}}
\]
commutes.
\end{lemma}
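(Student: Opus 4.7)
The plan is to verify the commutativity by tracking what happens to individual elements through both compositions, then invoke the quotient's universal property. The key observation is that $D_0(k)$ corresponds to the locus $\{t_0 = 0\}$, and when $t_0 = 0$ the diagonal matrix $\Delta(t)$ has a nontrivial kernel, so $-\mu\circ\Delta(t)\lambda^{-1}|_{\lambda(i(\Complex^k))}$ fails to be injective.

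First I would unwind the definitions to check commutativity pointwise. Take $(\lambda,\mu,s,t) \in \mathcal{L}(\Complex^{d_{0}},V_{0}) \times \mathcal{L}(\Complex^{k},V_{1}) \times D'(d_{0}-k) \times D'_{+}(k)$. If $t \notin D'_0(k)$ (so $t_0 > 0$ and thus $\Delta(t)$ is an isomorphism), the map $h'$ sends $t$ to its class in $D_+(k)/D_0(k)$, and the image of $(\lambda,\mu,s,t)$ under the composite $p_k \circ p$ equals $(\lambda(i(\Complex^k)),\,-\mu\circ\Delta(t)\lambda^{-1}|_{\lambda(i(\Complex^k))},\,\lambda\Delta(s)\lambda^{-1}|_{\lambda(i(\Complex^k))^\bot})$, viewed now as an element of $\mathcal{I}_k$ via the open inclusion $\mathcal{I}'_k \subset \tilde{Z}_k$. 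Going the other way, applying $p$ after $1\wedge 1\wedge h'$ gives the same element, since on the domain of injectivity the definition of $p$ is identical in both rows.

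Next I would handle the case $t \in D'_0(k)$. Going down then right, $h'$ sends $t$ to the basepoint of $D_+(k)/D_0(k)$, and hence the whole element is sent to the basepoint of $\mathcal{I}_k$. Going right then down, $\Delta(t)$ has nonzero kernel, so $-\mu\circ\Delta(t)\lambda^{-1}|_{\lambda(i(\Complex^k))}$ lies in $\inj(\lambda(i(\Complex^k)),V_1)^c$; thus $p(\lambda,\mu,s,t)$ lands in the closed subspace $\mathcal{J}'_k \subset \tilde{Z}_k$, which $p_k$ collapses to the basepoint of $\mathcal{I}_k$. The same argument handles the basepoint of any of the other factors.

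Finally, having shown both composites agree on the locally compact spaces before compactification and that the diagram of based composites agrees set-theoretically, continuity of all four maps and the fact that $(1\wedge 1\wedge h')$ and $(p_k \circ p)$ are both continuous — combined with the quotient characterisation in Lemma~\ref{quotientandsubspacearethesame} applied to the (proper) based extensions of $h'$ and of the composites — yields the commutativity of the stated diagram. There is no substantial obstacle here; the only mild technicality is to confirm that $p$ restricts as claimed to the complement of $D'_0(k)$, which is immediate since injectivity of $\Delta(t)\lambda^{-1}|_{\lambda(i(\Complex^k))}$ is equivalent to $t_0 > 0$.
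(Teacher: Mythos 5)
Your verification is correct and is exactly the routine element-wise check the paper leaves to the reader (this lemma is one of the ``next three results [that are] simple to prove''): on $t_0>0$ both composites give the same point of $\mathcal{I}'_k$, and on $t_0=0$ both give the basepoint since non-injectivity of $\Delta(t)$ puts $p(\lambda,\mu,s,t)$ in the locus that $p_k$ collapses. The only remark is that your final paragraph is superfluous --- once both composites are continuous, commutativity is a purely set-theoretic matter, so the appeal to Lemma~\ref{quotientandsubspacearethesame} and properness adds nothing.
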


\begin{lemma}\label{comparingCandD} Let $g$ and $g'$ be such that $h\circ g=g'\circ h'$. Then $\mathfrak{D}_{g'}\circ p_{k}=c_{k}\circ \mathfrak{C}_{g}$.
\end{lemma}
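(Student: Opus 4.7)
The plan is to verify the identity by precomposing both sides with the quotient map $p$ and then chasing around the defining diagrams of all the maps involved. Since $p$ is surjective (it is a proper quotient by Lemma~\ref{themapp}), an equality of maps out of $G_{k}(V_{0})^{\Hom(T,V_{1})\oplus s(T^{\bot})}$ can be checked after precomposition with $p$.

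More concretely, I would compute both sides as maps out of $\mathcal{L}(\Complex^{d_{0}},V_{0})_{\infty}\wedge\mathcal{L}(\Complex^{k},V_{1})_{\infty}\wedge D(d_{0}-k)\wedge D_{+}(k)$. For the right-hand side, the defining square of $\mathfrak{C}_{g}$ in Proposition~\ref{fnalvarC} gives $\mathfrak{C}_{g}\circ p = q\circ(1\wedge 1\wedge g)$, and then Lemma~\ref{collapseinfnalcalc} gives $c_{k}\circ q = q\circ(1\wedge h)$ (with the appropriate $q$ on the right), so
\[
c_{k}\circ\mathfrak{C}_{g}\circ p \;=\; c_{k}\circ q\circ(1\wedge 1\wedge g) \;=\; q\circ\bigl(1\wedge 1\wedge (h\circ g)\bigr).
\]
For the left-hand side, Lemma~\ref{theothercollapseinfnalcalc} gives $p_{k}\circ p = p\circ(1\wedge 1\wedge h')$, and then the defining square of $\mathfrak{D}_{g'}$ in Corollary~\ref{fnalvarD} gives $\mathfrak{D}_{g'}\circ p = q\circ(1\wedge 1\wedge g')$, so
\[
\mathfrak{D}_{g'}\circ p_{k}\circ p \;=\; \mathfrak{D}_{g'}\circ p\circ(1\wedge 1\wedge h') \;=\; q\circ\bigl(1\wedge 1\wedge (g'\circ h')\bigr).
\]

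By the hypothesis $h\circ g = g'\circ h'$ the two expressions coincide, so $\mathfrak{D}_{g'}\circ p_{k}\circ p = c_{k}\circ\mathfrak{C}_{g}\circ p$. Since $p$ is a quotient map, in particular surjective, this suffices to conclude $\mathfrak{D}_{g'}\circ p_{k} = c_{k}\circ\mathfrak{C}_{g}$.

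There is no real obstacle: the argument is a purely formal diagram chase that only uses the universal property built into Proposition~\ref{fnalvarC}/Corollary~\ref{fnalvarD} together with the two compatibility lemmas established immediately above. The only mild subtlety is bookkeeping the distinction between $p$, $q$ and their restrictions to the open subsets where the collapses land, but this is harmless because the two restricted maps agree with the original $p$, $q$ wherever both are defined, and the diagrams in Lemmas~\ref{collapseinfnalcalc} and \ref{theothercollapseinfnalcalc} are set up precisely to mediate between these versions.
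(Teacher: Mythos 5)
Your proof is correct and is exactly the diagram chase the paper has in mind when it declares this lemma ``simple to prove'' without giving details: precompose with the quotient $p$, rewrite both composites as $q\circ(1\wedge 1\wedge(h\circ g))$ and $q\circ(1\wedge 1\wedge(g'\circ h'))$ via the defining squares of $\mathfrak{C}_{g}$ and $\mathfrak{D}_{g'}$ together with Lemmas~\ref{collapseinfnalcalc} and \ref{theothercollapseinfnalcalc}, and cancel the surjection $p$. Your remark about the induced quotient-level versions of $p$ and $q$ correctly identifies the only point of bookkeeping.
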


We can now work in the functional calculus. We recall \ref{phikisfnalcalc} and state the below lemma, again the proof is standard.

\begin{lemma}\label{Rkisfnalcalc} Define
\begin{align*}
g'\co D(d_{0}-k)\wedge D_{+}(k)/D_{0}(k)&\to D(d_{0})/F_{d_{0}-k-1}(D(d_{0}))\\
(s,t)&\mapsto (\log(t_{0})-\Exp(-s),\log(t)).
\end{align*}
Then $\mathfrak{D}_{g'}= (\mathfrak{r}_{k})_{\infty}$.
\end{lemma}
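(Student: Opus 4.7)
The plan is to verify $\mathfrak{D}_{g'} = (\mathfrak{r}_k)_\infty$ by directly comparing the explicit eigendata formula for $\mathfrak{D}_{g'}$ supplied by Proposition~\ref{fnalvarC} (as extended through Corollary~\ref{fnalvarD}) with the closed form for $\mathfrak{r}_k$ in Proposition~\ref{thetaEalphaextended}. Since both maps are already known to be continuous and to send infinity to infinity, it suffices to check agreement pointwise on $\mathcal{I}_k'$.

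I would fix $(W,\gamma,\psi) \in \mathcal{I}_k'$ and pick the orthonormal eigenbases prescribed by Proposition~\ref{fnalvarC}: vectors $v_0,\ldots,v_{d_0-k-1}$ of $W^\bot$ diagonalising $\psi$ with eigenvalues $e_0 \leqslant \cdots \leqslant e_{d_0-k-1}$, and vectors $v_{d_0-k},\ldots,v_{d_0-1}$ of $W$ with $\gamma(v_i)=e_i m_i$ for orthonormal $m_i$; since $\gamma \in \inj(W,V_1)$, the scalars $e_{d_0-k},\ldots,e_{d_0-1}$ are strictly positive. Substituting into the stated formula for $g'$ produces the tuple whose entries are $\log(e_{d_0-k}) - \Exp(-e_i)$ for $i < d_0-k$ followed by $\log(e_i)$ for $i \geqslant d_0-k$. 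By the explicit description in Proposition~\ref{fnalvarC}, this gives $\mathfrak{D}_{g'}(W,\gamma,\psi) = (\alpha,-\sigma(\gamma))$, with $\alpha$ the self-adjoint endomorphism having the $v_i$ as eigenvectors with the above eigenvalues.

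I would then unpack $\mathfrak{r}_k(W,\gamma,\psi)$ in the same basis. The endomorphism $\rho(\gamma)|_W$ has eigenvalues $e_{d_0-k}\leqslant\cdots\leqslant e_{d_0-1}$, so $e_0(\rho(\gamma))=e_{d_0-k}$ and $\log(\rho(\gamma))|_W$ has eigenvalues $\log(e_i)$ on $v_i$. On $W^\bot$, $\Exp(-\psi)$ has eigenvalues $\Exp(-e_i)$ on $v_i$, so $(\log(e_0(\rho(\gamma))) - \Exp(-\psi))|_{W^\bot}$ has eigenvalues $\log(e_{d_0-k}) - \Exp(-e_i)$ on $v_i$. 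These match the eigendata of $\alpha$ exactly, and the second coordinates are $-\sigma(\gamma)$ on the nose.

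The main obstacle, such as it is, is bookkeeping rather than substance: I need to confirm that $g'$ applied to the chosen tuple lands in a valid point of $D(d_0)/F_{d_0-k-1}(D(d_0))$---that is, $\log(e_{d_0-k})-\Exp(-e_i)$ is weakly increasing in $i$ and strictly below $\log(e_{d_0-k})$---so that the resulting $\alpha$ satisfies $P_k(\alpha) = W$ and the identification of $\tilde{X}_k\setminus Y_k$ with $\mathcal{I}_k$ via Proposition~\ref{thetaEalphaextended} puts both sides in the same ambient space. Since the $e_i$ are finite and weakly increasing, $\Exp(-e_i)$ is positive and weakly decreasing, so both conditions are immediate and the verification is complete.
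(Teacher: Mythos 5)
Your proposal is correct and is essentially the verification the paper has in mind (the paper omits the proof as ``standard''): one compares the eigendata description of $\mathfrak{D}_{g'}$ coming from Proposition~\ref{fnalvarC} and Corollary~\ref{fnalvarD} with the explicit formula for $\mathfrak{r}_{k}$ in Proposition~\ref{thetaEalphaextended}, using $e_{0}(\rho(\gamma))=t_{0}$ and the identification $P_{k}(\alpha)=W$. Your bookkeeping on monotonicity and on the image avoiding $F_{d_{0}-k-1}(D(d_{0}))$ correctly handles the only points that need care.
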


We have a diagram
\[
\xymatrix{&D(d_{0})\ar[d]^{h}\\
D(d_{0}-k)\wedge D_{+}(k)\ar[r]_{g'\circ h'}\ar[ur]^{f}&
\frac{D(d_{0})}{F_{d_{0}-k-1}(D(d_{0}))}}
\]
which we claim commutes up to facial homotopy. There is a map $\bar{f}\co D(d_{0}-k)\wedge D_{+}(k)/D_{0}(k)\to D(d_{0})/F_{d_{0}-k-1}(D(d_{0}))$ making
\[
\xymatrix{D(d_{0}-k)\wedge D_{+}(k)\ar[d]_{h'}\ar[r]^{f}&D(d_{0})\ar[d]^{h}\\
D(d_{0}-k)\wedge
\frac{D_{+}(k)}{D_{0}(k)}\ar[r]_{\bar{f}}&\frac{D(d_{0})}{F_{d_{0}-k-1}(D(d_{0}))}}
\]
strictly commute. If $\bar{f}$ is homotopic to $g'$ through a facial homotopy then it follows that the diagram is homotopy commutative.

\begin{proposition}\label{homotopycommutativityinthiscase} $\bar{f}\simeq g'$ via a facial homotopy.
\end{proposition}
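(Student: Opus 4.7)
The plan is to apply the homotopy classification of Section~\ref{Homotopy classification in the functional calculus} to a lifted pair of maps. Define
\[
F(s,t) := f(s,t) = (s,\ s_{d_{0}-k-1}+t), \qquad G(s,t) := (\log(t_{0})-\Exp(-s),\ \log(t)),
\]
both viewed as facial maps $D(d_{0}-k)\wedge D_{+}(k)\to D(d_{0})$ in the sense of Definition~\ref{ExtendedFacialDefinition}. By construction $\bar{f}\circ h' = h\circ F$ and $g'\circ h' = h\circ G$, where $h\co D(d_{0})\to D(d_{0})/F_{d_{0}-k-1}(D(d_{0}))$ is the collapse. Since $f$ identifies $D(d_{0}-k)\wedge D_{0}(k)$ with $F_{d_{0}-k-1}(D(d_{0}))$, any facial homotopy into $D(d_{0})/F_{d_{0}-k-1}(D(d_{0}))$ automatically sends $D(d_{0}-k)\wedge D_{0}(k)$ to the basepoint throughout, and hence descends along $h'$ to a facial homotopy $\bar{f}\simeq g'$.

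Composing with $f^{-1}$ reduces the problem to producing a facial homotopy between the two facial maps $\phi := h$ and $\psi := h\circ G\circ f^{-1}$ from $D(d_{0})$ into $D(d_{0})/F_{d_{0}-k-1}(D(d_{0}))$. The crucial observation is that the core circle $\bar{B}_{\emptyset}\subset D(d_{0})$, consisting of points with all coordinates equal (together with $\infty$), is contained in $F_{d_{0}-k-1}(D(d_{0}))$, since on such a point trivially $t_{d_{0}-k-1}=t_{d_{0}-k}$. Thus both $\phi$ and $\psi$ restrict to the constant basepoint map on $\bar{B}_{\emptyset}$, and trivially agree there.

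This provides the base case for the inductive extension argument of Lemma~\ref{facialhomotopiesinduction}, which proceeds verbatim in this quotient setting: working one face $\bar{B}_{\sigma}$ at a time, the extension of the boundary homotopy over $\bar{B}_{\sigma}$ is produced by Lemma~\ref{extendingballshomotopies} applied to maps valued in the (still path-connected, based) space $D(d_{0})/F_{d_{0}-k-1}(D(d_{0}))$. The resulting facial homotopy between $\phi$ and $\psi$ then pulls back via $f$ and descends along $h'$ to the required facial homotopy $\bar{f}\simeq g'$.

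The principal obstacle is justifying that the inductive extension genuinely goes through when the target is a quotient rather than $D(d_{0})$ itself. What makes this work is that the constraint imposed on the homotopy by the quotient is strictly weaker than the self-map condition in the original lemma: we require only that certain faces be mapped into the basepoint, which is automatic for facial maps into $D(d_{0})/F_{d_{0}-k-1}(D(d_{0}))$. Consequently, at each inductive step, the space of valid extensions over $\bar{B}_{\sigma}$ is at least as large as in the original setting, so Lemma~\ref{extendingballshomotopies} applies without modification.
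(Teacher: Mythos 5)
Your reduction to the lifted maps $\phi=h$ and $\psi=h\circ G\circ f^{-1}$ and the observation that facial homotopies into the quotient automatically descend along $h'$ are both fine. The gap is in the claim that the inductive extension of Lemma~\ref{facialhomotopiesinduction} ``proceeds verbatim in this quotient setting.'' It does not, and the place where it breaks is exactly where the entire content of the proposition lives. When you extend the (trivial) homotopy from $\bar{B}_{\emptyset}$ over the two-dimensional face $\bar{B}_{\{d_{0}-k-1\}}\cong B^{2}$, a facial map into $D(d_{0})/F_{d_{0}-k-1}(D(d_{0}))$ must carry this face into the image of $\bar{B}_{\{d_{0}-k-1\}}$ in the quotient, which is $\bar{B}_{\{d_{0}-k-1\}}/\bar{B}_{\emptyset}\cong B^{2}/\partial B^{2}\cong S^{2}$ --- not a ball and not contractible. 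Lemma~\ref{extendingballshomotopies} therefore does not apply, and the obstruction to extending the boundary homotopy is the difference of the classes of $\phi|_{\bar{B}_{\{d_{0}-k-1\}}}$ and $\psi|_{\bar{B}_{\{d_{0}-k-1\}}}$ in $\pi_{2}(S^{2})\cong\mathbb{Z}$, i.e.\ a difference of degrees of maps $S^{2}\to S^{2}$. Your ``crucial observation'' that both maps are constant on the core circle is true but vacuous: passing to the quotient kills the $S^{1}$--degree invariant, and the relevant invariant moves up one dimension to this $S^{2}$. Your heuristic that ``the space of valid extensions is at least as large'' confuses a weaker constraint on the homotopy with the existence of an extension into a smaller (quotient) target.

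A quick sanity check shows the argument proves too much: if it were valid, it would show that \emph{any} two facial maps $D(d_{0})\to D(d_{0})/F_{d_{0}-k-1}(D(d_{0}))$ are facially homotopic, in particular that $\bar{f}$ is facially null-homotopic; pushed through the functional calculus this would make $\chi'$ null and destroy the identification of the cofibre of $\varpi_{k}$. The missing step --- showing that the two maps have the same degree on the relevant $S^{2}$ --- is precisely what the paper's proof supplies: after conjugating by the facial homeomorphisms $\bar{f}$ and $1\wedge\exp$ to get facial self-maps of $D(d_{0}-k)\wedge D(k)$, the induced maps on $S^{1}\wedge S^{1}=S^{2}$ are the identity and $(s,t)\mapsto(t-e^{-s},-s)$, and the Jacobian determinant computation shows the latter has degree $1$. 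Some explicit computation of this sort cannot be avoided, and your proof contains none.
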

\begin{proof} First note that $\bar{f}$ is a facial homeomorphism. Also note that $\exp\co D(k)\to D_{+}(k)/D_{0}(k)$ is a facial homeomorphism with inverse $\log$. Let $m\in\FMap (D(d_{0}-k)\wedge D_{+}(k)/D_{0}(k),D(d_{0})/F_{d_{0}-k-1}(D(d_{0})))$. We have an associated composition
\[
m'=(1\wedge\log)\circ\bar{f}^{-1}\circ m\circ (1\wedge\exp)\co D(d_{0}-k)\wedge D(k)\to D(d_{0}-k)\wedge D(k)
\]
which is facial as all components in the composition are facial. Hence we have induced maps $\bar{f}'$, $g''\co D(d_{0}-k)\wedge D(k)\to D(d_{0}-k)\wedge D(k)$ which we claim are homotopic via a facial homotopy. The homotopy type of $\FMap(D(d_{0}-k)\wedge D(k))$ is known; a facial map $m'\co D(d_{0}-k)\wedge D(k)\to D(d_{0}-k)\wedge D(k)$ must by necessity be of the form $m_{0}\wedge m_{1}$ for $m_{0}\co D(d_{0}-k)\to D(d_{0}-k)$ and $m_{1}\co D(k)\to D(k)$ facial. Hence we have a copy of $S^{2}$ embedded in $D(d_{0}-k)\wedge D(k)$ arising from the copies of $S^{1}$ embedded in $D(d_{0}-k)$ and $D(k)$ and thus an induced map $m''\co S^{2}\to S^{2}$ which via degree governs the facial homotopy type of $m'$---this is an immediate corollary of Theorem~\ref{facialhomotopiesviadegrees}. The maps $\bar{f}''$, $g'''\co S^{2}\to S^{2}$ built from $\bar{f}'$ and $g''$ are
\[
\bar{f}''\co(s,t)\mapsto (s,t)
\]
\[
g'''\co(s,t)\mapsto (t-e^{-s},-s).
\]
Demonstrating that $g'''$ is degree $1$ will complete the proof. Consider the unbased map $\Real\times\Real\to \Real\times\Real$ given by $(s,t)\mapsto (t-e^{-s},-s)$. This map has derivative matrix
\[
\left(\begin{array}{cc}e^{-s}& 1\\
-1 & 0\end{array}\right)
\]
which has determinant $1$. It follows that $g'''$ is degree $1$ and hence $\bar{f}\simeq g'$.
\end{proof}

Thus the diagram
\[
\xymatrix{&D(d_{0})\ar[d]^{h}\\
D(d_{0}-k)\wedge D_{+}(k)\ar[r]_{g'\circ h'}\ar[ur]^{f}&
\frac{D(d_{0})}{F_{d_{0}-k-1}(D(d_{0}))}}
\]
commutes up to homotopy. This result can be pulled up through the functional calculus via \ref{comparingCandD}, and further it is easy to check that equivariance is satisfied throughout. This proves the following proposition.

\begin{proposition}\label{wehavealift} $(\mathfrak{r}_{k})_{\infty}\circ p_{k}\simeq c_{k}\circ\tilde{\phi}_{k}$ and hence we have a cofibre sequence
\[
\xymatrix@C=1.5cm{\tilde{X}_{k}\ar[d]_{\tilde{\pi}_{k}}&G_{k}(V_{0})^{\Hom(T,V_{1})\oplus s(T^{\bot})}\ar[l]_{\tilde{\phi}_{k}\phantom{xxxxxxx}}\\
\tilde{X}_{k-1}\ar[ru]|\bigcirc_{\phantom{xxx}\tilde{\delta_{k}}}&}
\]
for the stated maps $\tilde{\pi}_{k}$, $\tilde{\delta}_{k}$ and $\tilde{\phi}_{k}$.
\end{proposition}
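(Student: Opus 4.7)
The plan is to propagate the facial homotopy $\bar{f}\simeq g'$ of Proposition~\ref{homotopycommutativityinthiscase} upward through the functional calculus $\mathfrak{C}$, $\mathfrak{D}$ and then combine the resulting homotopy with the cofibre triangle already built for $\varpi_{k}$. First I identify the relevant maps in functional-calculus form: by Lemma~\ref{phikisfnalcalc} we have $\tilde{\phi}_{k}=\mathfrak{C}_{f}$, and by Lemma~\ref{Rkisfnalcalc} we have $(\mathfrak{r}_{k})_{\infty}=\mathfrak{D}_{g'}$.

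Next, I use the strict commutativity $h\circ f=\bar{f}\circ h'$ coming from the definition of $\bar{f}$ to apply Lemma~\ref{comparingCandD} with $g=f$ and $g'$ replaced by $\bar{f}$. This yields the strict identity
\[
\mathfrak{D}_{\bar{f}}\circ p_{k}=c_{k}\circ\mathfrak{C}_{f}=c_{k}\circ\tilde{\phi}_{k}.
\]
Then, invoking the continuity of $\mathfrak{D}$ from Corollary~\ref{fnalvarD}, a facial homotopy $H_{t}\co \bar{f}\simeq g'$ is sent to a genuine homotopy $\mathfrak{D}_{H_{t}}\co\mathfrak{D}_{\bar{f}}\simeq\mathfrak{D}_{g'}=(\mathfrak{r}_{k})_{\infty}$. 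Postcomposing with $p_{k}$ gives the required homotopy $c_{k}\circ\tilde{\phi}_{k}\simeq(\mathfrak{r}_{k})_{\infty}\circ p_{k}$, which is the first claim of the proposition.

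For the cofibre sequence claim, I would insert $\tilde{\phi}_{k}$ into the large homotopy-commutative diagram that preceded the proposition. The inner triangle there is already established as a cofibre sequence (via $\varpi_{k}$, the collapse, and the identifications $\mathfrak{q}_{k}$, $\mathfrak{f}_{k}$ with the fibrewise construction $\chi_{k}$), and Lemma~\ref{quotientofacofibresequence} was used to transport this to the unquotiented setting. Since we have just verified that the outer right square commutes up to homotopy, the lift $\tilde{\phi}_{k}$ completes the outer triangle to one that is homotopy equivalent to the inner cofibre triangle, and hence is itself a cofibre triangle.

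The main technical obstacle, and the one requiring the most care, is the homotopy lifting step: one must be sure that the facial-homotopy output of $\mathfrak{D}_{H_{t}}$ is actually $G$-equivariant. The constructions $\mathfrak{A}$, $\mathfrak{B}$, $\mathfrak{C}$, $\mathfrak{D}$ are all defined by universal properties out of $G$-equivariant quotients $\nu$, $\mu$, $p$, $q$, so the result of applying them to a facial map is automatically equivariant; the same applies fibrewise to homotopies. After that verification, the remaining task is purely bookkeeping of the homotopy-commutative diagram, which is routine given the machinery already in place.
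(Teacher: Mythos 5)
Your proposal is correct and follows essentially the same route as the paper: identify $\tilde{\phi}_{k}=\mathfrak{C}_{f}$ and $(\mathfrak{r}_{k})_{\infty}=\mathfrak{D}_{g'}$, use the strict square $h\circ f=\bar{f}\circ h'$ together with Lemma~\ref{comparingCandD} to get $\mathfrak{D}_{\bar{f}}\circ p_{k}=c_{k}\circ\tilde{\phi}_{k}$, push the facial homotopy $\bar{f}\simeq g'$ of Proposition~\ref{homotopycommutativityinthiscase} through the (continuous, equivariant) functional calculus, and then feed the resulting homotopy-commutative square into the already-established cofibre triangle for $\varpi_{k}$. The only slip is terminological: you \emph{precompose} the homotopy $\mathfrak{D}_{H_{t}}$ with $p_{k}$ (i.e.\ compose with $p_{k}$ on the right), not postcompose.
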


It follows that
\[
\xymatrix@C=1.5cm
{X_{k}\ar[d]_{\pi_{k}}&G_{k}(V_{0})^{\Hom(T,V_{1} - V_{0})\oplus s(T)}\ar[l]_{\phi_{k}\phantom{xxxxxxx}}\\
X_{k-1}\ar[ur]|\bigcirc_{\phantom{xxx}\delta_{k}}&}
\]
is a cofibre sequence.

\section{Gysin maps and residues}\label{Gysin maps and residues}

We take a brief detour from studying the tower in order to establish a result linking certain Gysin maps with residue theory. This will then be used to produce an obstruction, previously stated as Theorem\ref{IntroObstruction}, to a splitting of Theorem~\ref{TheMainResult} in many cases. 

Our framework is as follows, let $G$ be a compact Lie group, let $V$ be a complex $G$--representation of dimension $d$ and let $j\co PV\rightarrowtail W$ be an equivariant embedding of $PV$ into a representation $W$. There is an associated Pontryagin--Thom collapse map $j^{!}\co S^{W}\to PV^{W\ominus\tau_{PV}}$ where $\tau_{PV}$ is the tangent bundle over $PV$. Hence there is a stable collapse
\[
\Sigma^{-W}j^{!}\co S^{0}\to PV_{0}^{-\tau_{PV}}.
\]
Let
\[
j_{!}=(\Sigma^{-W}j^{!})^{*}\co\redKstar(PV^{-\tau})\to \Kstar(S^{0}).
\]
be the associated Gysin map in equivariant $K$--theory, we claim that we can describe $j_{!}$ as an algebraic geometry style residue map. The non-equivariant version of this result was first proved by Quillen in \cite{QuillenCobordismFGL} using formal methods while for $G$ a finite abelian group the result was proved by Strickland in \cite[Theorem $21.35$]{StricklandMulticurves}, again using formal methods. We provide a purely geometric proof for $G$ any compact Lie group. We note here that the formal proof of \cite[Theorem $21.35$]{StricklandMulticurves} does actually pass to the general $G$ case in $K$--theory but our presented method bypasses many technicalities. Much of the detail and hard work of this proof lies in the work of Strickland, \cite[Section $21$]{StricklandMulticurves}.

First note that all $\Kone$--groups that may occur are zero, hence we restrict discussion to $\Kzero$. We recall the workings of a residue map. Consider an expression $f(x)\,dx$ where $f(x)=p(x)/q(x)$ for $p$ a polynomial and $q$ a monic polynomial. Then $f(x)$ can be expanded to a Laurent series with coefficients $b_{i}$ and the residue map is $\operatorname{res}(f(x)\,dx)= b_{-1}$, see \cite[Definition $21.26$]{StricklandMulticurves} for more detail. We note that $\Kzero(S^{0})$ is the complex representation ring $R(G)$ of $G$ and define
\[
f_{V}(z):= \sum_{k=0}^{d}z^{d-k}(-1)^{k}.\lambda^{k}(V)\in R(G)[z],
\]
a polynomial with coefficients constructed from the exterior powers $\lambda^{k}(V)$ of $V$. We also take the convention that $f_{V}(z)=\sum_{i=0}^{d} a_{i} z^{i}$ with $a_{d}=1$ and $a_{0}$ invertible. It is standard that the equivariant $K$--theory of $PV$ is
\[
\Kzero(PV)\cong \frac{R(G)[z]}{f_{V}(z)}\cong R(G)\{z^{i}:0\leqslant i<d\}
\]
and moreover that if $u_{-\tau_{PV}}$ is the Thom class of $-\tau_{PV}$ then $\redKzero(PV^{-\tau_{PV}})\cong \Kzero (PV).u_{-\tau_{PV}}$ by the Thom Isomorphism Theorem.

\begin{proposition}\label{GysinMapsAndResidue} We can identify the Thom class $u_{-\tau_{PV}}$ with $dz/f_{V}(z)$ and the map
\begin{align*}
j_{!}\co\frac{R(G)[z]}{f_{V}(z)}.u_{-\tau_{PV}}&\to R(G)\\
g(z).u_{-\tau_{PV}}&\mapsto j_{!}(g(z).u_{-\tau_{PV}})
\end{align*}
with the residue map
\[
\frac{g(z)}{f_{V}(z)}\,dz\mapsto\operatorname{res}\left(\frac{g(z)}{f_{V}(z)}\,dz\right). 
\] 
\end{proposition}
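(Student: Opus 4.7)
The plan is to prove the proposition by reducing to a canonical push-forward, identifying the Thom class explicitly through the tautological sequence, and then checking the residue formula on the standard $R(G)$-basis of $K^{0}_{G}(PV)$.

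First I would observe that while $j_{!}$ is built from a particular embedding $j\co PV\rightarrowtail W$, the resulting stable map $\Sigma^{-W}j^{!}\co S^{0}\to PV^{-\tau_{PV}}$ depends only on the stable normal data; any two equivariant embeddings into (possibly different) representations become equivariantly regularly homotopic after stabilizing, so $j_{!}$ is canonically the push-forward $\pi_{!}\co \redKzero(PV^{-\tau_{PV}})\to R(G)$ along $\pi\co PV\to \text{pt}$. This is a purely geometric reduction and uses only that $R(G)$ is the target and that $\Kone$ vanishes throughout.

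Next I would identify the Thom class explicitly. Let $L$ denote the tautological line bundle on $PV$, and use the standard isomorphism $\Kzero(PV)\cong R(G)[z]/f_{V}(z)$ with $z$ corresponding to $[L^{*}]$ (or $1-[L]$, depending on convention---I would fix the convention compatible with the statement). From the tautological exact sequence $0\to L\to\pi^{*}V\to Q\to 0$ and the identification $\tau_{PV}\cong \Hom(L,Q)=Q\otimes L^{*}$ one derives the K-theoretic identity
\[
\lambda_{-1}(\tau_{PV}^{*})=\frac{\lambda_{-1}(\pi^{*}V^{*}\otimes L)}{\lambda_{-1}(L)} = \frac{f_{V}(z)}{1-z\phantom{|}^{-1}\text{-type factor}}
\]
(up to fixing the precise form compatible with the convention). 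Writing the Thom class of $-\tau_{PV}$ as the inverse Euler-class expression and matching powers of $z$ produces the identification $u_{-\tau_{PV}}\leftrightarrow dz/f_{V}(z)$ once we declare $dz$ to be the canonical generator dictated by the orientation of $\tau_{PV}$.

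Then the heart of the proof is to check that $\pi_{!}$ agrees with the residue map on the $R(G)$-basis $\{z^{i}u_{-\tau_{PV}}:0\leqslant i<d\}$ of $\redKzero(PV^{-\tau_{PV}})$. Both maps are $R(G)$-linear by the projection formula, so it suffices to compare images of $z^{i}u_{-\tau_{PV}}$ for $0\leqslant i<d$ and to check compatibility with the relation $f_{V}(z)=0$. The classical residue $\operatorname{res}(z^{i}\,dz/f_{V}(z))$ vanishes for $0\leqslant i<d-1$ and equals $1$ for $i=d-1$, while the geometric push-forward $\pi_{!}(z^{i}u_{-\tau_{PV}})$ is computed via the Koszul resolution of the diagonal/the projective bundle formula, giving the same values. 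The remaining $R(G)[z]$-linearity relation---that both sides transform the same way under multiplication by $z$ modulo $f_{V}(z)$---follows from the algebraic identity that $z^{d}\equiv -\sum_{k<d} a_{k}z^{k}\pmod{f_{V}(z)}$ and the corresponding residue shift.

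The main obstacle is the bundle-theoretic identification of the Thom class in step two, phrased so that the equivariance is handled uniformly for an arbitrary compact Lie group $G$. For finite abelian $G$ one can localize at characters and reduce to the classical complex-variable residue, but in the general case one must work directly with $R(G)$-valued $K$-theory; the key technical input is that the Thom isomorphism, the projection formula, and the splitting principle all hold equivariantly over a general compact Lie group, so that the algebraic manipulations carried out in \cite[Section $21$]{StricklandMulticurves} transcribe faithfully. Once this equivariant bookkeeping is in place the comparison of $\pi_{!}$ with the residue map becomes the same formal verification as in the nonequivariant Quillen argument.
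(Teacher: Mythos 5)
Your overall strategy matches the paper's: reduce $j_{!}$ to the push-forward $\pi_{!}$ along $\pi\co PV\to\text{pt}$, evaluate on the basis $\{z^{i}u_{-\tau_{PV}}:0\leqslant i<d\}$, show the answer is $0$ for $i<d-1$ and $1$ for $i=d-1$, and then match this with the residue map. However, the central computation --- that $\pi_{!}(z^{i}u_{-\tau_{PV}})=\delta_{i,d-1}$ --- is precisely the content of the proposition, and you assert it ("computed via the Koszul resolution of the diagonal/the projective bundle formula, giving the same values") rather than prove it. The paper's entire proof is devoted to this step, and it is not a formality: one considers the diagonal $\delta\co PV\to PV\times PV$, shows that $\delta_{!}(u_{\tau_{PV}})=e$ is annihilated by $z-w$ and equals $(f_{V}(z)-f_{V}(w))/(z-w)=\sum_{0\leqslant i+j<d}a_{i+j+1}z^{i}w^{j}$, tensors $\delta_{!}$ over $\Kzero(PV\times PV)$ with $\redKzero(PV_{\infty}\wedge PV^{-\tau_{PV}})$ to obtain $\lambda\co\Kzero(PV)\to\redKzero(PV_{\infty}\wedge PV^{-\tau_{PV}})$ with $(1\otimes\pi_{!})\circ\lambda=1$ (since $(1\times\pi)\circ\delta=\mathrm{id}$), and then solves the resulting system $1=\sum_{0\leqslant i+j<d}a_{i+j+1}z^{i}r_{j}$ for the unknowns $r_{j}=\pi_{!}(z^{j}u_{-\tau_{PV}})$, using $a_{d}=1$ to read off $r_{d-1}=1$ and $r_{j}=0$ for $j<d-1$. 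You name the right ingredient (the Koszul resolution of the diagonal is what underlies the formula for $\delta_{!}(u_{\tau_{PV}})$), but you never explain how knowledge of $\delta_{!}$ determines $\pi_{!}$; that duality argument is the missing idea.

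Two smaller points. Your identification of $u_{-\tau_{PV}}$ with $dz/f_{V}(z)$ via $\lambda_{-1}$ of the tautological sequence is left in an avowedly unfinished state ("up to fixing the precise form compatible with the convention"); the paper instead delegates this bookkeeping to \cite[Lemma $21.28$]{StricklandMulticurves}, and in fact this identification is a consequence of, not an input to, the evaluation $r_{j}=\delta_{j,d-1}$, so your ordering of the steps puts unnecessary weight on it. Your opening reduction (independence of the choice of embedding $j$ after stabilization) is fine and is implicitly used by the paper as well.
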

\begin{proof} We actually prove an equivalent problem. Let $\pi\co PV\to\text{pt}$ be the projection. We show that the related stable Gysin map
\begin{align*}
\pi_{!}\co\redKzero(PV^{-\tau_{PV}})\cong R(G)\{z^{i}:0\leqslant i<d\}.u_{-\tau_{PV}}&\to \Kzero (S^{0})\cong R(G)\\
z^{i}.u_{-\tau_{PV}}&\mapsto r_{i}
\end{align*}
is a residue map by determining each $r_{i}$. Consider the diagonal $\delta\co PV\to PV\times PV$. We note that $(1\times\pi)\circ\delta$ is the identity. There is an associated Gysin map
\[
\delta_{!}\co \redKzero(PV^{\tau_{PV}})\cong\redKzero(PV).u_{\tau_{PV}}\to \Kzero(PV\times PV)\cong R(G)\{z^{i}w^{j}:0\leqslant i,j<d\}
\]
which sends $u_{\tau_{PV}}$ to some element $e$ say. The geometry of the stable collapse map $\delta^{!}\co PV\times PV\to PV^{\tau_{PV}}$ is known, it is a quotient as detailed in \cite[Corollary $21.37$]{StricklandMulticurves}. Thus $e$ is annihilated by $z-w$, i.e.\ $\delta_{!}(z^{i}u_{\tau_{PV}})=z^{i}e=w^{i}e$. We observe (cf.\ \cite[Example $21.6$]{StricklandMulticurves}) that
\[
e=\frac{f_{V}(z)-f_{V}(w)}{z-w}=\sum_{0\leqslant i+j<d} a_{i+j+1}z^{i}w^{j}.
\]
Now tensor throughout the map $\delta_{!}$ with $\redKzero(PV_{\infty}\wedge PV^{-\tau_{PV}})$ over $\Kzero(PV\times PV)$ to get a map
\[
\lambda\co \Kzero(PV)\to \redKzero(PV_{\infty}\wedge PV^{-\tau_{PV}})
\]
and observe from the geometric relationship of $\pi$ and $\delta$ that $(1\otimes \pi_{!})\circ\lambda\co \Kzero(PV)\to \Kzero(PV)$ is the identity. By the description of $\delta_{!}$ it follows that $\lambda(1)=e.(1\otimes u_{-\tau_{PV}})$ and hence as $(1\otimes \pi_{!})\circ\lambda=1$ that
\[
1=\sum_{0\leqslant i+j<d} a_{i+j+1} z^{i} r_{j}.
\]
This equality is satisfied by $r_{d-1}=1$ and $r_{j}=0$ for $j<d-1$ and thus
\[
\pi_{!}(z^{i}u_{-\tau_{PV}})=\left\{\begin{array}{ll}1&i=d-1\\
0&i<d-1.\end{array}\right.
\]
By \cite[Lemma $21.28$]{StricklandMulticurves} we can make the stated identifications and recognize this map as the residue map.
\end{proof}

\subsection{An obstruction to the splitting}\label{An obstruction to the splitting}

We return to the framework of the tower constructed in Theorem~\ref{TheMainResult}. As mentioned in the introduction Theorem~\ref{TheMainResult} can be thought of as a generalization of the Miller splitting \cite{Miller}. Thus there is interest in determining whether the tower could possibly split stably, to answer this question we prove Theorem~\ref{IntroObstruction}. 

Recall the triangles
\[
\xymatrix@C=1.5cm
{X_{k}\ar[d]_{\pi_{k}}&G_{k}(V_{0})^{\Hom(T,V_{1} - V_{0})\oplus s(T)}\ar[l]_{\phi_{k}\phantom{xxxxxxx}}\\
X_{k-1}\ar[ur]|\bigcirc_{\phantom{xxx}\delta_{k}}&}
\]
and observe that a splitting is only possible if all maps $\delta_{k}$ are null homotopic. We provide an interesting geometric description of $\delta_{1}:S^{0}\to \Sigma PV_{0}^{\Hom(T,V_{1} - V_{0})\oplus s(T)}$ and use this to produce a cohomological obstruction to a splitting in most cases.

Note that over $PV_{0}$ the bundle $s(T)$ is a copy of the trivial bundle $\Real$. We study the map $\delta_{1}\co S^{0}\to PV_{0}^{\Real\oplus\Hom(T,V_{1}-V_{0})\oplus s(T)}\cong \Sigma^{2} PV_{0}^{\Hom(T,V_{1}-V_{0})}$. Let $j\co PV_{0}\rightarrowtail\svo$ be the equivariant embedding $L\mapsto 0|_{L}\oplus -1|_{L^{\bot}}$. As covered in the previous section we have a stable Pontryagin--Thom collapse map
\[
\Sigma^{-\svo}j^{!}\co S^{0}\to PV_{0}^{-\tau_{PV_{0}}}.
\]
It is well known that the tangent bundle of $PV_{0}$ is the bundle $\Hom(T,T^{\bot})$. Recall the equivariant identity $\Hom(T,T^{\bot})\cong \Hom(T,V_{0})\ominus 2.s(T)$ mentioned in the proof of \ref{equibundleids}. This allows us to rewrite the collapse as
\[
\Sigma^{-\svo}j^{!}\co S^{0}\to PV_{0}^{2.s(T)-\Hom(T,V_{0})}\cong \Sigma^{2}PV_{0}^{-\Hom(T,V_{0})}.
\]
Let $i_{zero}\co PV_{0}^{-\Hom(T,V_{0})}\to PV_{0}^{\Hom(T,V_{1}-V_{0})}$ be the stable zero section and define $\delta^{j}$ to be the composition
\[
\delta^{j}\co S^{0}\overset{\Sigma^{-W}j^{!}}{\to}\Sigma^{2}PV_{0}^{-\Hom(T,V_{0})}\overset{-\Sigma(\Sigma i_{zero})}{\to}\Sigma^{2} PV_{0}^{\Hom(T,V_{1}-V_{0})}.
\]

\begin{proposition}\label{bottommapisPT} $\delta_{1}\simeq \delta^{j}$.
\end{proposition}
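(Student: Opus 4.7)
The plan is to unpack the unstable definition of $\tilde{\delta}_{1}$ and recognise it as the Pontryagin--Thom collapse for $j$ followed by the fibrewise zero section into $\Hom(T,V_{1})$, then transfer this identification to spectra via $\Sigma^{-\svo}\Sigma^{\infty}$ and the bundle identity from Lemma~\ref{equibundleids}.

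First I would identify the ingredients explicitly. When $k=1$, the subspace $Y_{1}\subset \tilde{X}_{0}=S^{\svo}$ is the locus of self-adjoint operators whose top eigenvalue has multiplicity at least $2$, and on its complement $U':=\svo\setminus Y'_{1}$ the assignments $\alpha\mapsto P_{1}(\alpha)$ (the top eigenline) and $\alpha\mapsto -\log((e_{d_{0}-1}(\alpha)-\alpha)|_{P_{1}(\alpha)^{\bot}})$ are continuous by Lemma~\ref{Pkiscont}. Since $W=P_{1}(\alpha)$ is one-dimensional we have $\inj(W,V_{1})^{c}=\{0\}$, so the space $\mathcal{J}'_{1}$ from \ref{sigmainjchomeoextended} is exactly the total space of $s(T^{\bot})$ over $PV_{0}$; thus $\Sigma\mathcal{J}_{1}\cong PV_{0}^{\Real\oplus s(T^{\bot})}$, and the map $-\Sigma i_{1}$ of \ref{themapdeltak} becomes precisely the fibrewise stable zero section into $\Hom(T,V_{1})$. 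So by the definition of $\tilde{\delta}_{1}$, it factors as a collapse $S^{\svo}\to (U')_{\infty}$, followed by $\mathfrak{f}_{1}$, followed by the zero section.

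Next I would identify this with the Pontryagin--Thom collapse of the equivariant embedding $j\co PV_{0}\hookrightarrow \svo$. The normal bundle of $j$ is computed from the equivariant splitting $\svo\cong s(T)\oplus s(T^{\bot})\oplus \Hom(T,T^{\bot})$ used in the proof of \ref{equibundleids}: since $\tau_{PV_{0}}=\Hom(T,T^{\bot})$ and $s(T)\cong \Real$ for $T$ one-dimensional, the normal bundle is $\nu\cong \Real\oplus s(T^{\bot})$. Plugging $\alpha=j(L)=0|_{L}\oplus(-1)|_{L^{\bot}}$ into the formula for $\mathfrak{f}_{1}$ gives $(e_{d_{0}-1},P_{1},-\log(1))=(0,L,0)$, so $\mathfrak{f}_{1}\circ j$ is the zero section. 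By Proposition~\ref{sigmainjchomeoextended}, $\mathfrak{f}_{1}$ is an equivariant homeomorphism from $U'$ onto the total space of $\nu$ over $PV_{0}$, so it exhibits $U'$ as an (equivariant) open tubular neighbourhood of $j(PV_{0})$ modelled on $\nu$. The composite $(\mathfrak{f}_{1})_{\infty}\circ\mathrm{coll}\co S^{\svo}\to PV_{0}^{\nu}$ is therefore a Pontryagin--Thom collapse for $j$, and any two such collapses are homotopic since the space of tubular neighbourhood identifications with fixed zero section is convex/contractible.

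Finally I would assemble: we have shown $\tilde{\delta}_{1}\simeq (-\Sigma i_{zero})\circ j^{!}$ unstably. Applying $\Sigma^{\infty}$ and smashing with $S^{-\svo}$ converts this directly into the composition defining $\delta^{j}$ once we rewrite $\nu-\svo=-\tau_{PV_{0}}$ and use $-\tau_{PV_{0}}\cong 2s(T)-\Hom(T,V_{0})$ from \ref{equibundleids}. The sign $-\Sigma$ in $\tilde{\delta}_{1}$ matches the $-\Sigma(\Sigma i_{zero})$ in the definition of $\delta^{j}$ since both suspension flips occur on the $\Real$-factor coming from the $\nu$-direction that normal to $j(PV_{0})$ is aligned with $s(T)$. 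The main obstacle I expect is bookkeeping: carefully matching orientations, signs, and the various bundle identifications equivariantly so that the composite of $\mathfrak{f}_{1}$ with the zero section really stabilises to $\delta^{j}$ rather than its negative or a sign-twisted cousin; once signs are handled the rest is a routine unpacking of definitions together with the contractibility argument for tubular neighbourhoods.
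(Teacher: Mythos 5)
Your proposal is correct and follows essentially the same route as the paper: unstably factor $\tilde{\delta}_{1}$ as the collapse onto the complement of $Y_{1}$, followed by the homeomorphism $\mathfrak{f}_{1}$ onto $\Sigma\mathcal{J}_{1}\cong PV_{0}^{\Real\oplus s(T^{\bot})}$, followed by the fibrewise zero section, and then identify the first two stages with $j^{!}$ before stabilizing. The only difference is cosmetic: where the paper verifies $j^{!}\simeq m\circ p_{0}$ by a direct definition chase using the specific embedding $j$, you invoke uniqueness of (equivariant) tubular neighbourhoods, which is a slightly more conceptual packaging of the same check.
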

\begin{proof} We prove this unstably. Let $\tilde{i}_{zero}\co PV_{0}^{s(T^{\bot})}\to PV_{0}^{\Hom(T,V_{1})\oplus s(T^{\bot})}$ be the zero section. We have a composition
\[
\tilde{\delta^{j}}\co S^{\svo}\overset{j^{!}}{\to} PV_{0}^{\svo-\Hom(T,T^{\bot})}\cong \Sigma PV_{0}^{s(T^{\bot})}\overset{-\Sigma \tilde{i}_{zero}}{\to}\Sigma PV_{0}^{\Hom(T,V_{1})\oplus s(T^{\bot})}.
\]
It is clear that $\Sigma^{-\svo}\tilde{\delta^{j}}=\delta^{j}$. Now let $p_{0}$ be the collapse $p_{0}\co S^{\svo}\to S^{\svo}/\sim$ where $\alpha\sim\alpha'$ if and only if $e_{d_{0}-1}(\alpha)=e_{d_{0}-2}(\alpha)$ and $e_{d_{0}-1}(\alpha')=e_{d_{0}-2}(\alpha')$. Further, define
\begin{align*}
m\co S^{\svo}/\sim&\to \Sigma PV_{0}^{s(T^{\bot})}\\
\alpha&\mapsto (e_{d_{0}-1}(\alpha),\Ker(\alpha-e_{d_{0}-1}(\alpha)),-\log(e_{d_{0}-1}(\alpha)-\alpha)|_{\Ker(\alpha-e_{d_{0}-1}(\alpha))^{\bot}}).
\end{align*}
By following through with the definition of $\tilde{\delta_{1}}$ from \ref{themapdeltak} it is easy to see that $\tilde{\delta_{1}}=-\Sigma\tilde{i}_{zero}\circ m\circ p_{0}$. Hence the claim follows if $j^{!}\simeq m\circ p_{0}$. This, however, follows from our specific choice of embedding $j$; it is a simple definition chase to check that the two maps match up.
\end{proof}

Using Proposition~\ref{GysinMapsAndResidue} the below result then follows.

\begin{corollary}\label{ObstructionToSplitting} The map $\delta_{1}$ is zero in equivariant $K$--theory if and only if $f_{V_{0}}(z)$ divides $f_{V_{1}}(z)$.
\end{corollary}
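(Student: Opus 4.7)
The plan is to translate $\delta_1^*$ into a concrete algebraic map on $\Kzero(PV_0) = R(G)[z]/f_{V_0}(z)$ and then recognise the divisibility condition as pure algebra. By Proposition~\ref{bottommapisPT} we have $\delta_1 \simeq \delta^j$, so $\delta_1^*$ factors as the stable zero-section pullback $i_{zero}^*$ followed by the Gysin map $j_!$. Proposition~\ref{GysinMapsAndResidue} already identifies $j_!$ acting on $\redKzero(PV_0^{-\tau_{PV_0}})$ with the residue $g(z).u_{-\tau_{PV_0}} \mapsto \operatorname{res}(g(z)\,dz/f_{V_0}(z))$, so the remaining task is to pin down $i_{zero}^*$.

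The zero section in question sits between the Thom spectra of $-\Hom(T,V_0)$ and $\Hom(T,V_1 - V_0) = -\Hom(T, V_0) + \Hom(T, V_1)$, so under the Thom isomorphism it corresponds to multiplication by the $K$--theory Euler class $e(\Hom(T, V_1)) \in \Kzero(PV_0)$. I would compute this using $e(E) = \lambda_{-1}(E^*)$, the identity $\lambda^k(T \otimes V_1^*) = T^k \otimes \lambda^k(V_1^*)$ (valid because $T$ is a line bundle), the duality $\lambda^k(V_1^*) \cong \lambda^{d_1-k}(V_1) \otimes (\det V_1)^{-1}$, and the identification $z = [T]$ implicit in Proposition~\ref{GysinMapsAndResidue}. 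Collecting the terms and reindexing gives $e(\Hom(T, V_1)) = (-1)^{d_1}(\det V_1)^{-1} f_{V_1}(z)$, which is $f_{V_1}(z)$ up to the unit $\pm(\det V_1)^{-1} \in R(G)^\times$.

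Combining these, $\delta_1^*$ is identified, up to multiplication by a unit, with the map $g(z) \mapsto \operatorname{res}(g(z) f_{V_1}(z)\,dz/f_{V_0}(z))$ on $R(G)[z]/f_{V_0}(z)$. Writing $f_{V_1}(z) = q(z) f_{V_0}(z) + r(z)$ by Euclidean division, the polynomial $q(z)g(z)$ contributes nothing to the residue, so the map simplifies to $g \mapsto \operatorname{res}(g(z) r(z)\,dz/f_{V_0}(z))$. Evaluated on the basis $\{1, z, \ldots, z^{d_0-1}\}$ of the quotient, \cite[Lemma $21.28$]{StricklandMulticurves} shows that these residues are exactly the coefficients of $r(z)$, so $\delta_1^*$ vanishes iff $r = 0$ iff $f_{V_0}(z) \mid f_{V_1}(z)$.

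The main obstacle will be making the Euler class calculation rigorous in the equivariant setting without invoking a splitting principle, so one has to work directly with the exterior power identities listed above. A related subtlety is the Bott class arising from $\tau_{PV_0} = \Hom(T, V_0) - 2s(T)$ when reconciling the Thom classes of $-\tau_{PV_0}$ and $-\Hom(T, V_0)$; fortunately that discrepancy is itself a unit in $R(G)$ and does not affect the final divisibility conclusion.
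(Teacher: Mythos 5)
Your proof is correct and is essentially the paper's own argument: the paper deduces the corollary directly from Propositions~\ref{bottommapisPT} and~\ref{GysinMapsAndResidue}, with the identification of $\delta_{1}^{*}$ as $g(z)\mapsto\operatorname{res}(g(z)f_{V_{1}}(z)\,dz/f_{V_{0}}(z))$ surfacing only in the proof of Theorem~\ref{NoSplittingThm}. You have simply made explicit the Euler-class computation for the zero section and the nondegeneracy of the residue pairing, details the paper leaves to the reader.
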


\begin{theorem}\label{NoSplittingThm} Let $G$ be connected, then the tower of Theorem~\ref{TheMainResult} does not split if $V_{0}$ is not a subrepresentation of $V_{1}$.
\end{theorem}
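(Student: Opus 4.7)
The plan is to argue by contradiction. Suppose the tower of Theorem~\ref{TheMainResult} splits stably. Then every connecting map $\delta_k$ must be stably null-homotopic; in particular $\delta_1$ vanishes in equivariant $K$-theory. Invoking Corollary~\ref{ObstructionToSplitting}, this forces the divisibility
\[
f_{V_0}(z) \mid f_{V_1}(z) \qquad \text{in } R(G)[z].
\]

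Next, I exploit the connectedness of $G$ by reducing to a maximal torus $T \leqslant G$. The restriction map $R(G) \hookrightarrow R(T)$ is injective, so the divisibility persists in $R(T)[z]$. Over $T$ each $V_i$ splits into one-dimensional characters $L_1^i, \ldots, L_{d_i}^i$, giving the factorization $f_{V_i}(z)|_T = \prod_j (z - L_j^i)$ into monic linear terms. Since $R(T)$ is a Laurent polynomial ring over $\mathbb{Z}$, $R(T)[z]$ is a UFD in which the distinct factors $(z - L)$ are pairwise non-associate primes; hence unique factorization forces the multiset $\{L_j^0\}$ to be a sub-multiset of $\{L_j^1\}$, yielding $V_0|_T \leqslant V_1|_T$ as $T$-representations.

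Finally, I promote this $T$-equivariant inclusion to a $G$-equivariant one. Since $G$ is connected, a virtual $G$-representation is determined by its character on $T$; a Weyl-theoretic analysis, combined with the hypothesis that $\delta_1$ is genuinely stably null (rather than merely $K$-theoretically null), should show that the virtual representation $V_1 - V_0$ is in fact an honest $G$-representation $U$, so that $V_1 \cong V_0 \oplus U$ and hence $V_0 \leqslant V_1$, contradicting the hypothesis.

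This last step is where I expect the principal obstacle to lie. Bare $K$-theoretic divisibility in $R(G)[z]$ does not by itself detect $G$-subrepresentations: a Weyl-orbit of characters appearing in $V_1|_T$ but not arising as an irreducible $G$-constituent can still satisfy the divisibility test without corresponding to a genuine $G$-summand. Closing this gap therefore requires leveraging the full strength of stable nullity of $\delta_1$---potentially via geometric fixed points or cohomological invariants beyond Atiyah--Segal $K$-theory---to sharpen the $T$-level containment into a $G$-level one. This is also exactly where the connectedness hypothesis enters essentially, as foreshadowed by Theorem~\ref{IntroObstruction}: in the disconnected case the $K$-theory polynomial divisibility is already the strongest available obstruction, whereas for connected $G$ one expects an additional rigidity forcing the two notions to coincide.
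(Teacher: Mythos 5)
Your first two steps reproduce the paper's argument exactly: a splitting forces $\delta_{1}$ to be stably null, Corollary~\ref{ObstructionToSplitting} converts this into the divisibility $f_{V_{0}}(z)\mid f_{V_{1}}(z)$ in $R(G)[z]$, and restriction to a maximal torus $\mathbb{T}$ together with unique factorization in $R(\mathbb{T})[z]$ and the factorization $f_{V}(z)=\prod_{j}(z-[L_{j}])$ gives $V_{0}|_{\mathbb{T}}\leqslant V_{1}|_{\mathbb{T}}$. The genuine gap in your proposal is that the final step --- promoting this $\mathbb{T}$-level containment to $V_{0}\leqslant V_{1}$ as $G$-representations --- is never carried out: you only indicate that some combination of Weyl theory and the full stable (rather than merely $K$-theoretic) nullity of $\delta_{1}$ ``should'' close it. As written, the proposal does not prove the theorem.

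That said, your diagnosis of where the difficulty sits is more candid than the paper's own treatment, which asserts that the UFD property of $R(G)[z]$ ``is enough to complete the proof'' and elaborates only the abelian case, where the linear factors $(z-[L_{i}])$ are primes and the conclusion is immediate. For non-abelian connected $G$ the implication ``$f_{V_{0}}\mid f_{V_{1}}$ in $R(G)[z]$ implies $V_{0}\leqslant V_{1}$'' fails: take $G=SU(2)$, $V_{0}=\Complex$ trivial and $V_{1}$ the three-dimensional irreducible representation, so that $f_{V_{1}}(z)$ restricts on the maximal torus to $(z-t^{2})(z-1)(z-t^{-2})$; then $f_{V_{0}}(z)=z-1$ divides $f_{V_{1}}(z)$ with quotient $z^{2}-([V_{1}]-1)z+1\in R(SU(2))[z]$, yet the trivial representation is not a summand of the irreducible $V_{1}$. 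So the $K$-theoretic residue obstruction genuinely fails to detect non-containment here --- precisely the Weyl-orbit phenomenon you describe --- and in this example the non-splitting is instead visible through geometric fixed points (the relevant map is an Euler class whose geometric $G$-fixed-point map is a degree-one self-map of $S^{0}$), which supports your suggestion that something beyond equivariant $K$-theory is needed. To complete your argument you would therefore need either to restrict attention to the case where $G$ is a torus, where the paper's UFD argument is valid, or to supply the stronger fixed-point-theoretic input you allude to; the paper's own proof does not provide it.
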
 
\begin{proof} From above we know that $\delta_{1}^{*}$ is going to be zero if and only if the meromorphic function $f_{V_{1}}(z)/f_{V_{0}}(z)$ has no singularities and thus if and only if $f_{V_{1}}(z)=f_{V_{0}}(z).g(z)$ for some polynomial $g$. Let $\mathbb{T}$ be a maximal torus of $G$, then it is a classical fact (see \cite[Section $4.3$ and $4.4$]{AtiyahHirzebruchVectorBundles}) that $R(\mathbb{T})$ is a ring of Laurent polynomials over $\mathbb{Z}$ and $R(G)$ is the subring of $R(\mathbb{T})$ of invariants under the action of the Weyl group. It follows as standard that $R(\mathbb{T})$ and $R(G)$ and hence $R(\mathbb{T})[z]$ and $R(G)[z]$ are unique factorization domains. This is enough to complete the proof, this is easiest to see when $G$ is abelian---in this case $V_{0}$ and $V_{1}$ decompose into sums of lines $V_{0}=\bigoplus L_{i}$ and $V_{1}=\bigoplus L_{i}'$. It is a classical fact that the polynomials $f_{V_{0}}(z)$ and $f_{V_{1}}(z)$ factorize as $\prod (z-[L_{i}])$ and $\prod (z-[L'_{i}])$ and the claim then follows.
\end{proof}

Unfortunately this theorem does not as proven pass to the non-connected case; for $G$ some certain finite groups one can choose representations $V_{0}$ and $V_{1}$ such that $V_{0}$ is not a subrepresentation of $V_{1}$ but $f_{V_{0}}(z)$ divides $f_{V_{1}}(z)$. We now consider one case where a splitting may be possible---when $V_{0}\leqslant V_{1}$.

\section{The subrepresentation case---conjecture}\label{The subrepresentation case---conjecture}

We return to the general case of $G$ a general compact Lie group. As indicated in the previous section the tower does not in general split if $V_{0}$ is not a subrepresentation of $V_{1}$. Consider instead the case where $V_{0}\leqslant V_{1}$, i.e.\ $V_{1}=V_{0}\oplus V_{2}$ for some representation $V_{2}$ and we have an inclusion $I\co V_{0}\to V_{1}$. Miller built a stable splitting of $\Ell_{\infty}$ in \cite{Miller} by first building a filtration
\[
F_{k}(\Ell):=\{\theta\in\Ell:\text{rank}(\theta-I)\leqslant k\}.
\]
The inclusion $F_{k-1}(\Ell)\rightarrowtail F_{k}(\Ell)$ is a cofibration and hence we have a cofibre sequence
\[
F_{k-1}(\Ell)_{\infty}\rightarrowtail F_{k}(\Ell)_{\infty}\twoheadrightarrow\frac{F_{k}(\Ell)_{\infty}}{F_{k-1}(\Ell)_{\infty}}\to\ldots.
\]
Miller completes the proof by building a homeomorphism and splitting map
\[
\tau_{k}\co \frac{F_{k}(\Ell)_{\infty}}{F_{k-1}(\Ell)_{\infty}}\overset{\cong}{\to} G_{k}(V_{0})^{\Hom(T,V_{2})\oplus s(T)},
\]
\[
\sigma_{k}\co G_{k}(V_{0})^{\Hom(T,V_{2})\oplus s(T)}\to F_{k}(\Ell)_{\infty}.
\]
It follows that there are stable splittings
\[
F_{k}(\Ell)_{\infty}\simeq F_{k-1}(\Ell)_{\infty}\vee G_{k}(V_{0})^{\Hom(T,V_{2})\oplus s(T)}.
\]
We conjecture that we can recover a similar stable splitting from our tower; thus the tower can be thought of as the `other direction' of the Miller splitting. We have a composition $F_{k}(\Ell)_{\infty}\rightarrowtail \Ell_{\infty}\twoheadrightarrow X_{k}$, call this map $r_{k}$.

\begin{conjecture}\label{theMillerconjecture}  $F_{k}(\Ell)_{\infty}\simeq X_{k}$ via $r_{k}$.
\end{conjecture}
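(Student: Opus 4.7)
My plan is to argue by induction on $k$. The base case $k=0$ is immediate: $F_{0}(\Ell)=\{I\}$ so $F_{0}(\Ell)_{\infty}=S^{0}=X_{0}$, and $r_{0}$ is the identity. For the inductive step, assume $r_{k-1}$ is a stable equivalence, and set $g_{k}\co F_{k}(\Ell)_{\infty}\to X_{k-1}$ to be the composite $\pi_{k}\circ r_{k}$. By construction $g_{k}\circ i=r_{k-1}$, where $i\co F_{k-1}(\Ell)_{\infty}\hookrightarrow F_{k}(\Ell)_{\infty}$ is Miller's inclusion. I would then apply the octahedral axiom twice in the triangulated equivariant stable homotopy category to compare the two cofibre structures.

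First apply the octahedral axiom to $F_{k-1}\overset{i}{\to} F_{k}\overset{g_{k}}{\to} X_{k-1}$. Miller's theorem identifies $C_{i}\simeq C_{k}$, where $C_{k}:=G_{k}(V_{0})^{\Hom(T,V_{1}-V_{0})\oplus s(T)}$, using the identification $V_{1}-V_{0}=V_{2}$ in the subrepresentation case. The inductive hypothesis gives $C_{r_{k-1}}\simeq 0$, so the octahedral distinguished triangle $C_{i}\to C_{r_{k-1}}\to C_{g_{k}}\to \Sigma C_{i}$ yields a stable equivalence $C_{g_{k}}\simeq \Sigma C_{k}$. Next apply the octahedral axiom to $F_{k}\overset{r_{k}}{\to} X_{k}\overset{\pi_{k}}{\to} X_{k-1}$; Theorem~\ref{TheMainResult} gives $C_{\pi_{k}}\simeq \Sigma C_{k}$, and combining with the previous step yields a distinguished triangle
\[
C_{r_{k}}\to \Sigma C_{k}\overset{\psi}{\to} \Sigma C_{k}\to \Sigma C_{r_{k}}.
\]
To finish the induction it suffices to show that $\psi$ is a stable equivalence, which would force $C_{r_{k}}\simeq 0$ and hence that $r_{k}$ is itself a stable equivalence.

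The main obstacle is precisely identifying and analysing $\psi$. Tracing through the octahedral construction, $\psi$ encodes a compatibility between Miller's boundary map $\partial_{M}\co C_{k}\to \Sigma F_{k-1}(\Ell)_{\infty}$ and Ullman's connecting map $\delta_{k}\co X_{k-1}\to \Sigma C_{k}$, intertwined via the inductive equivalence $\Sigma r_{k-1}$. I expect $\psi$ to be a stable equivalence to be equivalent to verifying the compatibility $r_{k}\circ \sigma_{k}\simeq \phi_{k}$ as maps $C_{k}\to X_{k}$, where $\sigma_{k}$ is Miller's splitting section from \cite{Miller} and $\phi_{k}$ is the map of Definition~\ref{themapphik}. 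Heuristically, this says that $r_{k}$ respects Miller's splitting and the resulting (induced) Ullman splitting in matching ways.

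I would attempt to verify $r_{k}\circ \sigma_{k}\simeq \phi_{k}$ using the functional calculus of Section~\ref{Extended functional calculus}. The map $\phi_{k}$ is already written in functional-calculus form by Lemma~\ref{phikisfnalcalc}, namely as $\mathfrak{C}_{f}$ for an explicit facial map $f$. Miller's section $\sigma_{k}$ admits a similar eigenvalue-theoretic description, so the strategy is to rewrite the composite $r_{k}\circ \sigma_{k}$ as $\mathfrak{C}_{g'}$ for some facial $g'$ and then invoke Theorem~\ref{facialhomotopiesviadegrees} to reduce the homotopy question to a degree computation on the embedded copy of $S^{1}$ inside $D(d_{0})$. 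The hard part will be carrying out this translation cleanly in the general equivariant setting; this apparent difficulty is presumably why the paper establishes the conjecture only in the $d_{0}=2$ case of Theorem~\ref{IntroSplitting}, where the induction collapses to a single directly-checkable comparison and Miller's explicit splitting for the Stiefel manifold $\mathcal{L}(\Complex^{2},V_{1})$ can be compared with $\phi_{2}$ by hand.
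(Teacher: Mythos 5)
The statement you are trying to prove is stated in the paper as a \emph{conjecture}: the paper offers no proof of it, explicitly records that it (and the equivalent compatibility statement) ``have proved to be surprisingly hard to solve'', and only establishes the bottom of the tower (\ref{thebottomofthetowersplits}), the top (\ref{thetopofthetowersplits}), and the $d_{0}=2$ case (\ref{Dimntwosplitting}). Your proposal does not close this gap; it reproduces the paper's own reduction and then stops at exactly the point where the paper stops. The formal part of your argument is fine: the double application of the octahedral axiom correctly produces a distinguished triangle $C_{r_{k}}\to\Sigma C_{k}\overset{\psi}{\to}\Sigma C_{k}\to\Sigma C_{r_{k}}$, and showing $\psi$ is an equivalence is indeed equivalent to the compatibility $r_{k}\circ\sigma_{k}\simeq\phi_{k}$ that the paper cites from the thesis as an equivalent form of the conjecture. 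But you never establish that $\psi$ is an equivalence, and the paper explicitly warns that this inductive step is where the argument breaks: from $r_{k-1}$ being an equivalence one can deduce (as in \ref{retrievingthemillersplitting}) that $X_{k}$ splits, but not that $r_{k}$ is an equivalence.

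Concretely, the missing content in your final paragraph is twofold. First, it is not shown that $r_{k}\circ\sigma_{k}$ is of the form $\mathfrak{C}_{g'}$ for a facial $g'$: to invoke \ref{fnalvarC} you must show the composite descends through the quotient map $p$ of \ref{themapp}, and Miller's section $\sigma_{k}$ is built from the fixed inclusion $I\co V_{0}\to V_{1}$ and a Cayley-type construction on $\theta-I$, data which do not obviously interact with the eigenvalue decomposition underlying $p$ and $q$; by contrast $\phi_{k}=\mathfrak{C}_{f}$ (\ref{phikisfnalcalc}) is defined directly in those terms. Second, even granting such a description, \ref{facialhomotopiesviadegrees} classifies facial self-maps of $D(d)$ only; for maps out of $D(d_{0}-k)\wedge D_{+}(k)$ the paper needs the separate product-of-circles argument of \ref{homotopycommutativityinthiscase}, and an analogous (unproven) extension would be required here, together with a verification that the resulting degree is $\pm 1$ rather than, say, $0$. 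Until these steps are carried out, what you have is a correct reformulation of the conjecture, not a proof; the honest conclusion of your argument is the one the paper draws, namely that the general case remains open.
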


In \cite[Section $7.3$]{HarryPhD} this conjecture is shown to be equivalent to proving that there is a homotopy 
\[
\sigma_{k}\circ r_{k}\simeq \phi_{k}\co G_{k}(V_{0})^{\Hom(T,V_{1}-V_{0})\oplus s(T)}=G_{k}(V_{0})^{\Hom(T,V_{2})\oplus s(T)}\to X_{k},
\]
however, both conjectures have proved to be surprisingly hard to solve. A new, different, formulation of some of the ideas in preparation by the author and Strickland may lead to a way forward, but under the current formulation we can only provide partial splitting results. Were the conjecture to hold then we can retrieve a splitting as below.

\begin{proposition}\label{retrievingthemillersplitting} Assume \ref{theMillerconjecture} holds, then we have an equivariant splitting
\[
X_{k}\simeq X_{k-1}\vee G_{k}(V_{0})^{\Hom(T,V_{2})\oplus s(T)}.
\]
\end{proposition}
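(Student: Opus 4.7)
Under Conjecture~\ref{theMillerconjecture}, the map $r_k\co F_k(\Ell)_\infty \to X_k$ is a $G$-equivariant stable equivalence, and the same conjecture applied at level $k-1$ yields an equivalence $r_{k-1}\co F_{k-1}(\Ell)_\infty \simeq X_{k-1}$. The plan is simply to chain these two equivalences with Miller's stable splitting of the rank filtration of $\Ell$, which has been recorded immediately before the proposition.

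In more detail, Miller's construction provides an equivariant homeomorphism $\tau_k$ identifying the quotient $F_k(\Ell)_\infty / F_{k-1}(\Ell)_\infty$ with $G_k(V_0)^{\Hom(T,V_2)\oplus s(T)}$, together with a map $\sigma_k$ that sections the collapse stably. Since $F_{k-1}(\Ell)\rightarrowtail F_k(\Ell)$ is a cofibration, we obtain a cofibre sequence whose connecting map is killed by $\sigma_k$, hence an equivariant stable splitting
\[
F_k(\Ell)_\infty \simeq F_{k-1}(\Ell)_\infty \vee G_k(V_0)^{\Hom(T,V_2)\oplus s(T)}.
\]
Chaining the three equivalences then gives
\[
X_k \simeq F_k(\Ell)_\infty \simeq F_{k-1}(\Ell)_\infty \vee G_k(V_0)^{\Hom(T,V_2)\oplus s(T)} \simeq X_{k-1} \vee G_k(V_0)^{\Hom(T,V_2)\oplus s(T)},
\]
which is the required splitting.

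The main obstacle is of course the conjecture itself, which the paper notes remains unproven in general; once it is granted the argument is purely formal. The only minor technical point is to confirm that Miller's construction respects the $G$-action in our subrepresentation setting $V_1 = V_0 \oplus V_2$, but this is immediate: the filtration by $\operatorname{rank}(\theta - I)$ is manifestly preserved by the conjugation action, and the explicit formulas for $\tau_k$ and $\sigma_k$ depend only on natural operations (taking image and orthogonal complement) which commute with the $G$-action. Consequently the entire Miller splitting lives in the $G$-equivariant stable category, and the chain of equivalences above is an equivariant one.
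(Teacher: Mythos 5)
Your argument has a genuine gap, and it sits exactly where you wave it away as a ``minor technical point.'' You invoke the splitting $F_k(\Ell)_\infty \simeq F_{k-1}(\Ell)_\infty \vee G_k(V_0)^{\Hom(T,V_2)\oplus s(T)}$ as an \emph{equivariant} equivalence, justifying this by observing that the filtration and the maps $\tau_k$, $\sigma_k$ are $G$-equivariant. But equivariance of the maps is not the issue: what Miller proves is that $\sigma_k$ is a \emph{stable section} of the collapse, i.e.\ that a certain composite is a stable homotopy equivalence, and his argument for that is nonequivariant. A $G$-map that is a nonequivariant equivalence need not be a $G$-equivalence (one must check all fixed-point spectra), so ``the entire Miller splitting lives in the $G$-equivariant stable category'' is precisely the hard open statement, not an immediate consequence of naturality of the formulas. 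Indeed, if the equivariant filtration splittings were available for all $k$, one could chain them to obtain $\Ell_\infty \simeq \bigvee_k G_k(V_0)^{\Hom(T,V_2)\oplus s(T)}$ directly, which would render Conjecture~\ref{theMillerconjecture} and the entire Section~\ref{The subrepresentation case---conjecture} unnecessary and would prove Theorem~\ref{Dimntwosplitting} for all $d_0$, not just $d_0=2$. The passage in the paper recounting $\tau_k$ and $\sigma_k$ is a summary of Miller's nonequivariant proof, not a licence to use the splitting equivariantly.

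The paper's own proof avoids Miller's splitting entirely. It uses only the strictly commutative square $\pi_k\circ r_k\circ i_k = r_{k-1}$ (where $i_k\co F_{k-1}(\Ell)_\infty\to F_k(\Ell)_\infty$ is the inclusion) together with a homotopy inverse $r_{k-1}^{-1}$ supplied by the conjecture: the composite $r_k\circ i_k\circ r_{k-1}^{-1}\co X_{k-1}\to X_k$ satisfies $\pi_k\circ r_k\circ i_k\circ r_{k-1}^{-1}\simeq \mathrm{id}_{X_{k-1}}$, so $\pi_k$ admits a stable section and the cofibre triangle of Theorem~\ref{TheMainResult} splits. This is both logically cleaner (it needs the conjecture only at level $k-1$, whereas you invoke it at levels $k$ and $k-1$) and stronger in its conclusion: it splits the actual tower map $\pi_k$ and hence shows $\delta_k\simeq 0$, whereas your chain of abstract equivalences, even if its middle link were repaired, would only produce an equivalence of spectra with no stated compatibility with $\pi_k$. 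If you want to salvage your approach, you must either prove the equivariant filtration splitting (which is essentially the theorem the paper cannot prove in general) or replace that step with the section argument above.
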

\begin{proof} We have a commutative diagram
\[
\xymatrix{F_{k}(\Ell)_{\infty}\ar[r]^{\phantom{xxxxxx}r_{k}}&X_{k}\ar[d]^{\pi_{k}}\\
F_{k-1}(\Ell)_{\infty}\ar[u]_{i_{k}}\ar[r]_{\phantom{xxxxxx}r_{k-1}}&X_{k-1}}
\]
where $i_{k}$ is the standard inclusion. Let $r_{k-1}^{-1}$ denote our homotopy inverse, then we have a composition $r_{k}\circ i_{k}\circ r_{k-1}^{-1}\co X_{k-1}\to X_{k}$. Consider the self-map $\pi_{k}\circ r_{k}\circ i_{k}\circ r_{k-1}^{-1}\co X_{k-1}\to X_{k-1}$, as the above diagram commutes we have $\pi_{k}\circ r_{k}\circ i_{k}\circ r_{k-1}^{-1}\simeq r_{k-1}\circ r_{k-1}^{-1}\simeq\text{id}_{X_{k-1}}$ and hence $r_{k}\circ i_{k}\circ r_{k-1}^{-1}$ is a splitting map.
\end{proof}

\subsection{The subrepresentation case---results}\label{The subrepresentation case---results}

Although we can't demonstrate the splitting in general we can prove that the top and bottom portions of the tower split in the subrepresentation case, thus proving Theorem~\ref{IntroSplitting}.

\begin{proposition}\label{thebottomofthetowersplits} We have an equivariant splitting
\[
X_{1}\simeq S^{0}\vee PV_{0}^{\Hom(T,V_{2})\oplus s(T)}.
\]
\end{proposition}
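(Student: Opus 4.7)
The plan is to construct an equivariant stable section of $\pi_{1}\co X_{1}\to S^{0}$; once such a section exists, the cofibre triangle
\[
PV_{0}^{\Hom(T,V_{1}-V_{0})\oplus s(T)}\overset{\phi_{1}}{\to}X_{1}\overset{\pi_{1}}{\to}S^{0}\overset{\delta_{1}}{\to}\Sigma PV_{0}^{\Hom(T,V_{1}-V_{0})\oplus s(T)}
\]
furnished by Theorem~\ref{TheMainResult} splits by the usual triangulated-category argument, yielding the claimed wedge decomposition.

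Under the hypothesis $V_{0}\leqslant V_{1}$, the subrepresentation inclusion $I\co V_{0}\hookrightarrow V_{1}$ is itself a $G$-equivariant linear isometry, so $I$ is a $G$-fixed point of $\Ell$ (using that $\Ell$ carries the conjugation action). I would use this fixed point to define a $G$-equivariant based map $\iota\co S^{0}\to\Ell_{\infty}$ sending the non-basepoint to $I$. By part (1) of Theorem~\ref{TheMainResult}, the composite $\Ell_{\infty}=X_{d_{0}}\to X_{d_{0}-1}\to\ldots\to X_{0}=S^{0}$ is $\Sigma^{\infty}$ of the compactified projection $p\co \Ell_{\infty}\to S^{0}$, and clearly $p\circ\iota=\mathrm{id}_{S^{0}}$. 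Setting
\[
\sigma:=\pi_{2}\circ\pi_{3}\circ\cdots\circ\pi_{d_{0}}\circ\Sigma^{\infty}\iota\co S^{0}\to X_{1},
\]
I obtain an equivariant stable map with $\pi_{1}\circ\sigma=\mathrm{id}_{S^{0}}$.

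Given such a section, the identity $\delta_{1}\simeq\delta_{1}\circ\pi_{1}\circ\sigma\simeq 0$ follows from the fact that adjacent maps in a distinguished triangle compose to zero, and hence the triangle above splits equivariantly as
\[
X_{1}\simeq S^{0}\vee PV_{0}^{\Hom(T,V_{1}-V_{0})\oplus s(T)}=S^{0}\vee PV_{0}^{\Hom(T,V_{2})\oplus s(T)},
\]
using $V_{1}=V_{0}\oplus V_{2}$ to identify the virtual bundle $\Hom(T,V_{1}-V_{0})$ with the honest bundle $\Hom(T,V_{2})$. There is no serious obstacle here; the whole proof hinges on the observation that the hypothesis $V_{0}\leqslant V_{1}$ produces the canonical $G$-fixed isometry $I$, which is precisely the input needed to build an equivariant section of the bottom projection. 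This is consistent with Theorem~\ref{NoSplittingThm}: when $V_{0}$ is not a subrepresentation of $V_{1}$ the fixed set $\Ell^{G}$ may be empty and no such canonical section is available, and indeed the cohomological obstruction of Corollary~\ref{ObstructionToSplitting} may genuinely be nonzero.
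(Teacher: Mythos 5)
Your proposal is correct and is essentially the paper's own argument: the paper's splitting map is $r_{1}\circ i_{1}\circ r_{0}^{-1}$ built from $F_{0}(\Ell)_{\infty}=\{I\}_{\infty}=S^{0}$, which is exactly your section $\sigma$ obtained by pushing the $G$-fixed isometry $I$ down the tower, followed by the same standard splitting of a distinguished triangle admitting a section. The only cosmetic difference is that the paper routes the construction through the Miller filtration and Proposition~\ref{retrievingthemillersplitting} with $r_{0}=\mathrm{id}$, whereas you name the fixed point $I$ directly.
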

\begin{proof} Note that $r_{0}\co S^{0}\to S^{0}$ is the identity, hence it has an inverse. The techniques of \ref{retrievingthemillersplitting} then produce the result.
\end{proof}

We would hope to extend this result to producing a complete Miller splitting via an inductive argument. While using \ref{retrievingthemillersplitting} we can build a splitting from the assumption that $r_{k-1}$ is an equivalence we cannot from this assumption show that $r_{k}$ is an equivalence, a needed fact to complete the induction. Hence we seem to be unable to generalize this result to producing a complete splitting. 

\begin{proposition}\label{thetopofthetowersplits} The map $r_{d_{0}-1}$ provides a stable homotopy equivalence $F_{d_{0}-1}(\Ell)_{\infty}\simeq X_{d_{0}-1}$ and hence there is an equivariant splitting
\[
\Ell_{\infty}\simeq X_{d_{0}-1}\vee G_{d_{0}}(V_{0})^{\Hom(T,V_{2})\oplus s(T)}.
\]
\end{proposition}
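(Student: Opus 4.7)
The plan is to prove that $r_{d_{0}-1}$ is a stable equivalence and then invoke the construction of Proposition~\ref{retrievingthemillersplitting} at $k=d_{0}$ to obtain the splitting. Since $X_{d_{0}}=\mathcal{L}(V_{0},V_{1})_{\infty}=F_{d_{0}}(\mathcal{L})_{\infty}$, the map $r_{d_{0}}$ is the identity and therefore trivially an equivalence, so once $r_{d_{0}-1}$ is known to be an equivalence the splitting map $r_{d_{0}}\circ i_{d_{0}}\circ r_{d_{0}-1}^{-1}$ of that proposition is defined and produces the desired wedge decomposition.

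To show $r_{d_{0}-1}$ is a stable equivalence, I would compare the two cofibre sequences passing through $\mathcal{L}(V_{0},V_{1})_{\infty}$. Miller's filtration gives
\[
F_{d_{0}-1}(\mathcal{L})_{\infty}\overset{i_{d_{0}}}{\to}\mathcal{L}(V_{0},V_{1})_{\infty}\overset{q_{M}}{\to} Q
\]
with splitting $\sigma_{d_{0}}$, where $Q:=G_{d_{0}}(V_{0})^{\Hom(T,V_{2})\oplus s(T)}$. Theorem~\ref{TheMainResult} at $k=d_{0}$ gives
\[
Q\overset{\phi_{d_{0}}}{\to}\mathcal{L}(V_{0},V_{1})_{\infty}\overset{\pi_{d_{0}}}{\to} X_{d_{0}-1},
\]
where the same $Q$ appears in both sequences because $V_{0}\leqslant V_{1}$ identifies $\Hom(T,V_{1}-V_{0})$ with $\Hom(V_{0},V_{2})$ at the unique point $V_{0}\in G_{d_{0}}(V_{0})$. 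Since $r_{d_{0}-1}=\pi_{d_{0}}\circ i_{d_{0}}$, the octahedral axiom produces a stable triangle $Q\to C(r_{d_{0}-1})\to\Sigma Q$ whose connecting map $\Sigma Q\to\Sigma Q$ is, up to sign, the suspension of $q_{M}\circ\phi_{d_{0}}\co Q\to Q$. Consequently $r_{d_{0}-1}$ is a stable equivalence if and only if $q_{M}\circ\phi_{d_{0}}$ is a stable self-equivalence of $Q$.

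For the final step I would compute $q_{M}\circ\phi_{d_{0}}$ using the explicit descriptions available. By Definition~\ref{themapphik} the unstable map $\tilde{\phi}_{d_{0}}$ sends $\gamma\in S^{\Hom(V_{0},V_{1})}$ to $(\rho(\gamma),-\sigma(\gamma))$, which under the homeomorphism $\kappa$ of Lemma~\ref{thetaEalpha} identifies with the functional-calculus map $\mathfrak{B}_{\Exp}\co S^{\Hom(V_{0},V_{1})}\to\inj(V_{0},V_{1})_{\infty}$. Miller's $\tau_{d_{0}}$ (and hence $q_{M}$) admits an analogous description in terms of the polar decomposition of $\theta-I$. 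After smashing with $S^{s(V_{0})}$ and transferring everything to the common representation sphere $S^{\Hom(V_{0},V_{1})}$, the composite $q_{M}\circ\phi_{d_{0}}$ becomes a facial self-map in the sense of Section~\ref{Homotopy classification in the functional calculus}; by Theorem~\ref{facialhomotopiesviadegrees} it then suffices to compute a single degree on the $S^{1}$-core of $D(d_{0})$, a direct calculation that should yield $\pm 1$.

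The principal obstacle is this final identification: translating Miller's classical construction of $\sigma_{d_{0}}$ and $\tau_{d_{0}}$ (phrased via the rank filtration $\operatorname{rank}(\theta-I)\leqslant k$) into the functional-calculus language of Section~\ref{Extended functional calculus}, precisely enough that $q_{M}\circ\phi_{d_{0}}$ can be recognized as a facial self-map whose degree on $S^{1}$ can be read off. Theorem~\ref{facialhomotopiesviadegrees} is what makes this approach tractable, reducing a potentially delicate homotopy comparison between two inherently different constructions to a one-dimensional degree calculation.
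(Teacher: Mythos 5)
Your overall strategy coincides with the paper's: both compare the Miller cofibre sequence $F_{d_{0}-1}(\Ell)_{\infty}\to\Ell_{\infty}\to G_{d_{0}}(V_{0})^{\Hom(T,V_{2})\oplus s(T)}$ with the top triangle of the tower, apply the octahedral axiom to $r_{d_{0}-1}=\pi_{d_{0}}\circ i_{d_{0}}$, and reduce everything to showing that the composite $q_{M}\circ\phi_{d_{0}}$ is an equivalence. The difference is in how that last step is handled, and it is exactly the step you flag as ``the principal obstacle'' and leave unresolved: you propose to transport both maps into the functional calculus and run a degree computation via Theorem~\ref{facialhomotopiesviadegrees}, whereas the paper simply observes that $\phi_{d_{0}}$ \emph{is} Miller's splitting map $\sigma_{d_{0}}$ on the nose --- the formula $\gamma\mapsto(\log(\rho(\gamma)),-\sigma(\gamma))$ from \ref{thetaEalpha} is literally the formula in \cite[Lemma $1.1$]{Crabb} and \cite[Lemma $1.3$]{Kitchloo} --- so $q_{M}\circ\phi_{d_{0}}\simeq\operatorname{id}$ is already part of Miller's theorem and no homotopy classification is needed. (Minor correction to your description: $\tilde{\phi}_{d_{0}}$ sends $\gamma$ to $(\log(\rho(\gamma)),-\sigma(\gamma))$, not $(\rho(\gamma),-\sigma(\gamma))$.) As written your argument is therefore incomplete at the decisive point; either carry out the translation and degree calculation you sketch, or, more efficiently, just match the two formulas directly. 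You should also finish as the paper does: the octahedral axiom gives that $C(r_{d_{0}-1})$ is contractible, hence $r_{d_{0}-1}$ induces an isomorphism on homotopy groups and is an equivalence by the Whitehead theorem for $G$--$CW$--spectra, after which the splitting follows as in \ref{retrievingthemillersplitting}.
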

\begin{proof} For shorthand write $G_{d_{0}}$ for $G_{d_{0}}(V_{0})^{\Hom(T,V_{2})\oplus s(T)}$. We observe that the description of our map $\phi_{d_{0}}\co
G_{d_{0}} \to \Ell_{\infty}$ matches exactly the description of the Miller splitting map $\sigma_{d_{0}}$ (compare \ref{thetaEalpha} to \cite[Lemma $1.1$]{Crabb} or \cite[Lemma $1.3$]{Kitchloo}). It follows that we have a diagram
\[
\xymatrix{&&\text{pt}\ar[dr]|\bigcirc&&\\
&G_{d_{0}}\ar[ur]\ar[dl]|\bigcirc&&\ar[ll]_{\text{id}}G_{d_{0}}\ar[dl]^{\phi_{d_{0}}}&\\
F_{d_{0}-1}(\Ell)_{\infty}\ar@/_1pc/[rrrr]_{r_{d_{0}-1}}\ar[rr]&&\Ell_{\infty}\ar[ul]\ar[rr]&&X_{d_{0}-1}\ar[ul]|\bigcirc}
\]
of cofibre sequences. By the octahedral axiom we retrieve that $r_{d_{0}-1}$ has contractible cofibre. As we are working stably, $r_{d_{0}-1}$ gives an isomorphism of homotopy groups and as our spectra are $G$--$CW$--spectra it follows that $r_{d_{0}-1}$ is an equivalence by the Whitehead Theorem.
\end{proof}
Again, the techniques in this proof fail to generalize to a proof that the tower splits. We can, however, combine the results to produce an equivariant splitting in a special case.

\begin{theorem}\label{Dimntwosplitting} Let $V_{1}=V_{0}\oplus V_{2}$ and let $d_{0}=2$. Then the tower of Theorem~\ref{TheMainResult} splits to produce an equivariant Miller splitting
\[
\Ell_{\infty}\simeq \bigvee_{k=0}^{2}G_{d_{0}}(V_{0})^{\Hom(T,V_{2})\oplus s(T)}.
\]
\end{theorem}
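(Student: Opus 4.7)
The plan is to combine the two partial splitting results already established---Proposition \ref{thebottomofthetowersplits} and Proposition \ref{thetopofthetowersplits}---with the specific observation that when $d_0 = 2$, the tower of Theorem \ref{TheMainResult} has only three terms, namely
\[
\Ell_{\infty}\overset{\pi_{2}}{\to} X_{1}\overset{\pi_{1}}{\to} S^{0} = X_0,
\]
so that the top and bottom split results together cover every map in the tower.

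First I would note that $d_0 = 2$ makes $X_{d_0 - 1} = X_1$, so Proposition \ref{thetopofthetowersplits} immediately gives
\[
\Ell_{\infty}\simeq X_{1}\vee G_{2}(V_{0})^{\Hom(T,V_{2})\oplus s(T)}.
\]
Then I would apply Proposition \ref{thebottomofthetowersplits} to the $X_1$ summand, yielding
\[
X_{1}\simeq S^{0}\vee PV_{0}^{\Hom(T,V_{2})\oplus s(T)} = G_{0}(V_{0})^{\Hom(T,V_{2})\oplus s(T)} \vee G_{1}(V_{0})^{\Hom(T,V_{2})\oplus s(T)},
\]
using the identifications $G_0(V_0) = \mathrm{pt}$ (whose Thom space with the zero bundle is $S^0$) and $G_1(V_0) = PV_0$. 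Substituting this into the first splitting assembles the three summands
\[
\Ell_{\infty}\simeq G_{0}(V_{0})^{\Hom(T,V_{2})\oplus s(T)} \vee G_{1}(V_{0})^{\Hom(T,V_{2})\oplus s(T)} \vee G_{2}(V_{0})^{\Hom(T,V_{2})\oplus s(T)},
\]
which is the desired equivariant Miller splitting.

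There is no real obstacle here, since the two hard ingredients---namely the splitting at the top (proved via an octahedral argument together with a matching of $\phi_{d_0}$ with the Miller splitting map $\sigma_{d_0}$) and the splitting at the bottom (where $r_0$ is trivially invertible)---have both already been established in the preceding propositions. The mild point to verify is simply that the compositions of splittings respect the $G$-action, but this is immediate from the equivariance of the maps $r_k$, $\phi_k$ and $\pi_k$ used in Proposition \ref{retrievingthemillersplitting} and Proposition \ref{thetopofthetowersplits}. The reason the argument does not extend beyond $d_0 = 2$ is precisely that for $d_0 \geqslant 3$ there is an intermediate map $\pi_k$ in the tower neither at the top nor at the bottom, and neither of our two available techniques handles this middle range---this is the content of the remark following Proposition \ref{thebottomofthetowersplits}.
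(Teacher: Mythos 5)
Your proposal is correct and is exactly the argument the paper intends: the paper offers no separate proof of this theorem beyond the remark that one ``combines the results,'' and your combination of Proposition~\ref{thetopofthetowersplits} (which for $d_{0}=2$ splits off the $k=2$ summand from $\Ell_{\infty}$, leaving $X_{1}$) with Proposition~\ref{thebottomofthetowersplits} (which splits $X_{1}$ into the $k=0$ and $k=1$ summands) is precisely that combination. The identifications $G_{0}(V_{0})^{\Hom(T,V_{2})\oplus s(T)}=S^{0}$ and $G_{1}(V_{0})=PV_{0}$ and the observation about equivariance are all as the paper implicitly assumes.
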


\bibliography{EquiGeneralBib}

\begin{thebibliography}{LMSM86}

\bibitem[AH61]{AtiyahHirzebruchVectorBundles}
M.~F. Atiyah and F.~Hirzebruch.
\newblock Vector bundles and homogeneous spaces.
\newblock In {\em Proc. {S}ympos. {P}ure {M}ath., {V}ol. {III}}, pages 7--38.
  American Mathematical Society, Providence, R.I., 1961.

\bibitem[Cra87]{Crabb}
M.~C. Crabb.
\newblock On the stable splitting of {${\rm U}(n)$} and {$\Omega {\rm U}(n)$}.
\newblock In {\em Algebraic topology, {B}arcelona, 1986}, volume 1298 of {\em
  Lecture Notes in Math.}, pages 35--53. Springer, Berlin, 1987.

\bibitem[Goo92]{Goodwillie2}
Thomas~G. Goodwillie.
\newblock Calculus. {II}. {A}nalytic functors.
\newblock {\em $K$-Theory}, 5(4):295--332, 1991/92.

\bibitem[HPS97]{HoveyPalmieriStrickland}
Mark Hovey, John~H. Palmieri, and Neil~P. Strickland.
\newblock Axiomatic stable homotopy theory.
\newblock {\em Mem. Amer. Math. Soc.}, 128(610):x+114, 1997.

\bibitem[Kit01]{Kitchloo}
Nitu Kitchloo.
\newblock Cohomology splittings of {S}tiefel manifolds.
\newblock {\em J. London Math. Soc. (2)}, 64(2):457--471, 2001.

\bibitem[LMSM86]{LewisMaySteinberger}
L.~G. Lewis, Jr., J.~P. May, M.~Steinberger, and J.~E. McClure.
\newblock {\em Equivariant stable homotopy theory}, volume 1213 of {\em Lecture
  Notes in Mathematics}.
\newblock Springer-Verlag, Berlin, 1986.
\newblock With contributions by J. E. McClure.

\bibitem[Mil85]{Miller}
Haynes Miller.
\newblock Stable splittings of {S}tiefel manifolds.
\newblock {\em Topology}, 24(4):411--419, 1985.

\bibitem[MM02]{OrthogonalSpectraSModules}
M.~A. Mandell and J.~P. May.
\newblock Equivariant orthogonal spectra and {$S$}-modules.
\newblock {\em Mem. Amer. Math. Soc.}, 159(755):x+108, 2002.

\bibitem[Qui69]{QuillenCobordismFGL}
Daniel Quillen.
\newblock On the formal group laws of unoriented and complex cobordism theory.
\newblock {\em Bull. Amer. Math. Soc.}, 75:1293--1298, 1969.

\bibitem[Str02]{StricklandSubbundles}
N.~P. Strickland.
\newblock Common subbundles and intersections of divisors.
\newblock {\em Algebr. Geom. Topol.}, 2:1061--1118 (electronic), 2002.

\bibitem[Str08]{StricklandMulticurves}
N.~P. Strickland.
\newblock Multicurves and equivariant cohomology.
\newblock 2008.
\newblock Eprint, arXiv:math/0211058v2.

\bibitem[Ull10]{HarryPhD}
H.~E. Ullman.
\newblock {\em The equivariant stable homotopy theory around isometric linear
  maps}.
\newblock PhD thesis, University of Sheffield, 2010.
\newblock arXiv:1101.2142v1.

\end{thebibliography}

\bibliographystyle{alpha}

\end{document}